\newcommand{\R}{\mathbb{R}}
\newcommand{\Z}{\mathbb{Z}}
\newcommand{\C}{\mathbb{C}}
\newcommand{\bO}{\mathbb{O}}
\newcommand{\bG}{\mathbb{G}}
\renewcommand{\P}{\mathbb{P}}
\newcommand{\bA}{\mathbb{A}}
\newcommand{\e}{\varepsilon}
\renewcommand{\phi}{\varphi}
\newcommand{\s}{\sigma}
\newcommand{\La}{\Lambda}
\newcommand{\om}{\omega}
\newcommand{\ga}{\gamma}
\newcommand{\la}{\lambda}
\newcommand{\p}{\partial}
\newcommand{\De}{\Delta}
\newcommand{\Om}{\Omega}
\renewcommand{\O}{\mathcal{O}}
\newcommand{\B}{\mathcal{B}}
\newcommand{\cS}{{\mathcal{S}}}
\renewcommand{\Im}{\text{Im}}
\newcommand{\Ann}{\text{Ann}}
\newcommand{\Id}{\text{Id}}
\newcommand{\Span}{\text{Span}}
\newcommand{\PSL}{\text{PSL}}
\newcommand{\Sp}{\text{Sp}}
\newcommand{\mr}{\leqslant}
\newcommand{\br}{\geqslant}
\newcommand{\f}{\frac}
\renewcommand{\l}{\limits}
\renewcommand{\o}{\overline}
\renewcommand{\sb}{\subseteq}
\newcommand{\sbn}{\subsetneq}
\newcommand{\ld}{\ldots}
\newcommand{\w}{\wedge}
\renewcommand{\t}{\widetilde}
\newcommand{\vn}{\varnothing}
\newcommand{\inj}{\hookrightarrow}
\newcommand{\sur}{\twoheadrightarrow}
\newcommand{\ot}{\otimes}
\newcommand{\op}{\oplus}
\renewcommand{\ld}{\ldots}
\renewcommand{\o}{\overline}
\newcommand{\cd}{\cdot}
\newcommand{\h}{\widehat}
\newcommand{\bs}{\backslash}
\newcommand{\circar}{\circlearrowright}
\newcommand{\<}{\langle}
\renewcommand{\>}{\rangle}
\renewcommand{\char}{\text{char}\,}
\newcommand{\IGr}{\text{IGr}}
\newcommand{\Lie}{\text{Lie}\,}
\newcommand{\Stab}{\text{Stab}}
\newcommand{\SL}{\text{SL}}
\newcommand{\GL}{\text{GL}}
\newcommand{\m}{\mathfrak{m}} 
\renewcommand{\sl}{\mathfrak{sl}}
\newcommand{\gl}{\mathfrak{gl}}
\newcommand{\g}{\mathfrak{g}}
\newcommand{\q}{\mathfrak{q}}
\renewcommand{\u}{\mathfrak{u}}
\renewcommand{\h}{\mathfrak{h}}
\newcommand{\fb}{\mathfrak{b}}
\newcommand{\ad}{\text{ad}}
\newcommand{\Ad}{\text{Ad}}
\newcommand{\n}{\mathfrak{n}}
\newcommand{\ft}{\mathfrak{t}}
\renewcommand{\sp}{\mathfrak{sp}}
\newcommand{\fs}{\mathfrak{s}}
\newcommand{\se}{\mathsf{e}}
\newcommand{\scrX}{{\mathscr X}}
\renewcommand{\a}{\eta}
\renewcommand{\b}{\theta}
\newcommand{\ba}{\boldsymbol{\alpha}}
\newcommand{\al}{\alpha}
\newcommand{\be}{\beta}
\newcommand{\iso}{\xrightarrow{
    \,\smash{\raisebox{-0.65ex}{\ensuremath{\scriptstyle\sim}}}\,}}
\definecolor{Red}{rgb}{0.7, 0,0}
\theoremstyle{definition}
\newtheorem{lem}{Lemma}[subsection]
\newtheorem{theo}[lem]{Theorem}
\newtheorem{prop}[lem]{Proposition}
\newtheorem{cor}[lem]{Corollary}
\newtheorem{ex}[lem]{Example}
\newtheorem{rem}[lem]{Remark}
\newtheorem{defi}[lem]{Definition}
\newtheorem{fact}[lem]{Fact}
\title 
{Lagrangian Subvarieties of Hyperspherical Varieties Related to $G_2$}
\author{Nikolay Kononenko}
\begin{document}
	
\maketitle

\begin{abstract}
  We consider two $S$-dual hyperspherical varieties of the group $G_2\times\SL(2)$:
  an equivariant slice for $G_2$, and the symplectic representation of $G_2 \times \SL_2$
  in the odd part of the basic classical Lie superalgebra $\g(3)$. For these varieties we
  check the equality of numbers of irreducible components of their Lagrangian subvarieties (zero
  levels of the moment maps of Borel subgroups' actions) conjectured in~\cite{fgt}. 
\end{abstract}

\emph{2020 Mathematics Subject Classification}: 14J42, 53D20, 53D37.

\emph{Keywords}: hyperspherical varieties, Lagrangian subvarieties, $S$-duality, equivariant Slodowy slices, twisted cotangent bundles.

\tableofcontents

%----------------------------------------------------------------------------------------
%----------------------------------------------------------------------------------------
%----------------------------------------------------------------------------------------

%\newpage
\section{Introduction} \label{intr}

\subsection{Setup} \label{intr form}

We consider a nilpotent element $e$ in the exceptional simple complex Lie algebra $\g_2$,
corresponding to a short root vector, so that $e$ lies in the 8-dimensional nilpotent orbit
${\mathbb O}_8\subset\g_2$. We fix an $\sl_2$-triple $(e,h,f)$ in $\g_2$. It defines a
Slodowy slice $\cS_e$ to ${\mathbb O}_8$. The centralizer of this $\sl_2$-triple in the
ambient group $G_2$ is a subgroup $\SL(2)\subset G_2$. We obtain a natural
$G_2\times\SL(2)$-action on the {\em equivariant slice} $\scrX=G_2\times\cS_e$. The equivariant
slice carries a $G_2\times\SL(2)$-invariant symplectic structure, see e.g.~\cite{l1,l2}
or~\cite[\S3.4]{bzsv}.
This slice is equivariantly symplectomorphic to a twisted cotangent bundle $T^*_\psi(G_2/U)$
for a 4-dimensional unipotent subgroup of $G_2$.
Moreover, the Hamiltonian action of $G_2\times\SL(2)$ on $G_2\times\cS_e$ is {\em coisotropic},
i.e.\ a typical $G_2\times\SL(2)$-orbit is isotropic. It also satisfies the
conditions~\cite[\S3.5.1]{bzsv} of {\em hypersphericity}. But it has a nontrivial anomaly (cf.~\cite[\S4.1]{bdfrt}), so its
$S$-dual variety $\scrX^\vee$ (cf.~\cite[\S4]{bzsv}) carries a hamiltonian action not of the
Langlands dual group $(G_2\times\SL(2))^\vee$, but rather of the {\em metaplectic} dual group that
is again isomorphic to $G_2\times\SL(2)$. It turns out that $\scrX^\vee$ is just a linear
symplectic representation of $G_2\times\SL(2)$, namely $\scrX^\vee=\C^7\otimes\C^2$, where
$\C^7$ is the 7-dimensional fundamental representation of $G_2$, while $\C^2$ is the 2-dimensional
fundamental representation of $\SL(2)$. Note that $G_2\times\SL(2)$ is the even part of the
exceptional basic classical Lie supergroups $G(3)$, and $\C^7\otimes\C^2$ is the odd part of
its Lie superalgebra $\g(3)$.

Set $\t{G}=G_2\times\SL(2)$ and choose a Borel subgroup $\t{B}\subset \t{G}$. Let $\t{\fb}$ denote the Lie algebra of $\t{B}$, and let $\mu_{\scrX}\colon\scrX\to {\t{\fb}}^\ast$ (resp.\ 
$\mu_{\scrX^\vee}\colon\scrX^\vee\to {\t{\fb}}^\ast$) denote the corresponding moment map.
Let $\Lambda_{\scrX}\subset\scrX$ (resp.\ $\Lambda_{\scrX^\vee}\subset\scrX^\vee$) denote the
zero level of the moment map $\mu_{\scrX}$ (resp.\ $\mu_{\scrX^\vee}$). 
Then according to~\cite[Conjecture 1.1.1]{fgt}, both $\Lambda_{\scrX}\subset\scrX$ and
$\Lambda_{\scrX^\vee}\subset\scrX^\vee$ should be Lagrangian subvarieties, and they should have the
same number of irreducible components. In fact, this conjecture is checked in~\cite{fgt}
for all basic classical supergroups (that is for $\t{G}^\vee$ the even part of such supergroup
and $\scrX^\vee$ the odd part of its Lie superalgebra) {\em except} for $G(3)$.

We settle the remaining case in the present note.

\begin{theo} \label{theo}
  The varieties $\La_\scrX$ and $\La_{\scrX^\vee}$ are both Lagrangian and each has $7$
  irreducible components.
\end{theo}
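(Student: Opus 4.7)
The plan is to analyze the Lagrangian subvarieties $\La_\scrX$ and $\La_{\scrX^\vee}$ separately, verifying that each is Lagrangian and decomposing each into its $7$ irreducible components.

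For $\La_{\scrX^\vee}\sb\C^7\ot\C^2$, I would pass to explicit coordinates, writing an element as $(v_1,v_2)\in\C^7\op\C^7$ with the $\t G$-invariant symplectic form $\om((v_1,v_2),(w_1,w_2))=g(v_1,w_2)-g(v_2,w_1)$, where $g$ is the $G_2$-invariant symmetric bilinear form on $\C^7$. A direct computation yields the moment-map equations
\[
g(Xv_1,v_2)=0\ \text{for all}\ X\in\fb_{G_2},\qquad g(v_1,v_2)=0,\qquad g(v_2,v_2)=0.
\]
I would stratify $\La_{\scrX^\vee}$ using the complete flag $0\sbn V_1\sbn\cdots\sbn V_7=\C^7$ of $B_{G_2}$-invariant subspaces together with the orthogonality relations $V_k^\perp=V_{7-k}$, where $V_4$ contains the zero-weight vector which self-pairs nontrivially. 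Carefully closing up the strata indexed by pairs $(k_1,k_2)$ with $v_1\in V_{k_1}\sm V_{k_1-1}$ and $v_2\in V_{k_2}\sm V_{k_2-1}$, and identifying the seven top-dimensional irreducible components, is the crux on this side. Four are naturally the linear Lagrangian subspaces $V_b\times V_a\sb\C^7\op\C^7$ indexed by $(a,b)=(0,7),(1,6),(2,5),(3,4)$, and I expect the remaining three components to have a more intricate nonlinear structure encoding the coupling between $v_1$ and $v_2$ imposed by the equation $g(Xv_1,v_2)=0$.

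For $\La_\scrX\sb G_2\times\cS_e$, I would use the equivariant symplectomorphism $\scrX\cong T^*_\psi(G_2/U)$ from~\cite[\S3.4]{bzsv}, where $U\sb G_2$ is the 4-dimensional unipotent subgroup $U=\exp(\g_2(\ge 2)\op L)$ with $L\sb\g_2(1)$ a Lagrangian line for the natural symplectic form on $\g_2(1)$, and $\psi$ is the Whittaker-type character given by pairing with $e$. The natural projection $T^*_\psi(G_2/U)\to G_2/U$ restricts to $\La_\scrX$ and stratifies it according to $\t B$-orbits on $G_2/U$, where $\t B$ acts by left multiplication of $B_{G_2}$ combined with the right multiplication of the $\SL_2$ centralizer of the $\sl_2$-triple embedded in $G_2$. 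A Bruhat-type analysis of these orbits, together with the twist-compatibility imposed by $\psi$ on the stabilizers, should pick out the seven orbits whose conormal-type contributions produce the top-dimensional components.

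The main obstacle on the $\scrX^\vee$-side is describing the three nonlinear components: their tangent spaces meet the four linear Lagrangians in nongeneric ways, indicating that $\La_{\scrX^\vee}$ is singular at certain loci and that the decomposition into components requires a subtle analysis beyond the flag-filtration stratification. On the $\scrX$-side, the main obstacle is tracking the combined left $B_{G_2}$-action and right $\SL_2$-action on $G_2/U$ and identifying which of the resulting $\t B$-orbits contribute nontrivially to $\La_\scrX$ under the twist $\psi$. Once the enumerations are in place on both sides, the Lagrangian property follows from a direct dimension check, and the coincidence at seven establishes the $S$-duality prediction for $G(3)$.
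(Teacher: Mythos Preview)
Your overall framework matches the paper's: twisted cotangent bundle and Bruhat-type relevance analysis on the $\scrX$ side, moment-map equations and a flag stratification on the $\scrX^\vee$ side. However, on both sides you stop short of the simplifying lemma that makes the count clean, and you flag precisely those points as ``obstacles'' without a plan to resolve them.

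On the linear side, your moment-map equations are correct, and your guess of four linear components $V_7\times V_0,\ V_6\times V_1,\ V_5\times V_2,\ V_4\times V_3$ plus three nonlinear ones is in fact accurate. But the paper avoids your ``subtle analysis'' entirely. The key observation (Proposition~\ref{ls rel 2}) is that under the $B$-equivariant symplectic identification $\scrX^\vee\cong T^\ast V$ with $V=\C^7$, the variety $\La_{\scrX^\vee}$ is \emph{exactly} the union of conormal bundles to $B$-orbits in the quadric $Q=\{v:g(v,v)=0\}\subset V$. This reduces everything to counting $B$-orbits in $Q$. Since $\P(Q)\cong G_2/P$ for a maximal parabolic $P$, the Bruhat decomposition gives six orbits in $\P(Q)$; each lifts to a single $B$-orbit in $Q\setminus\{0\}$ (because every nonzero weight space is one-dimensional, so the torus acts transitively on each punctured weight line), and together with the zero orbit this gives seven. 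The Lagrangian property and the component count then follow immediately from the conormal description, with no need to understand the ``nonlinear'' components explicitly.

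On the equivariant-slice side you are essentially on the paper's track. The one refinement you are missing is Proposition~\ref{tcb 16}: rather than analyzing the combined left $B_{G_2}$ and right $S$ (Borel of $\SL_2$) action on $T^\ast_\psi(G_2/N_l)$ directly, one shows that the irreducible components of $\La_\scrX$ are in bijection with the relevant $B_{G_2}$-orbits in the \emph{smaller} homogeneous space $G_2/(S\ltimes N_l)$. This converts the two-sided problem into a one-sided one, and then a short Bruhat computation (Lemma~\ref{ro 1} and the two paragraphs following it) shows there are exactly six relevant $T$-fixed orbits plus one relevant ``complementary'' orbit, again giving seven.
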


\subsection{Organization of the paper} \label{intr str}

The proof of~Theorem~\ref{theo} is self-contained. It is divided into two parts and the appendix.

In the first part~\S\ref{es} we study the equivariant slice. In~\S\ref{tcb} we discuss the
general construction of twisted cotangent bundle of a homogeneous space of a complex linear
algebraic group and study the left and two-sided actions on this affine bundle. This part is
self-contained. It is not directly related to our problem and can be read independently.
In~\S\ref{id} we give an equivalent description for $\scrX$ as a twisted cotangent
bundle of a $\t{G}$-homogeneous space. Then we reduce the question of counting
$\mathrm{Irr} \: \Lambda_{\scrX}$ to counting the {\em relevant} orbits in this homogeneous space.
Furthermore, we identify two symplectic structures on $\scrX$ (one coming from the twisted
cotangent bundle (see~\S\ref{tcb}) and another one constructed in~\cite{l2}).
Then we compute the number of relevant orbits using Bruhat decomposition. 

In the second part~\S\ref{ls} we study the linear side, which is much more simple.
In~\S\ref{ls rel} we describe $\La_{\scrX^\vee}$ in terms of conormal bundles to relevant
orbits in some cotangent bundle. In~\S\ref{ls count} we compute the number of such orbits. 

In the Appendix~\S\ref{ap} we discuss the existence of connected linear algebraic subgroups
with given Lie subalgebras (see~\S\ref{ap sub}) and give explicit forms of some objects related
to the Lie algebra $\g_2$ and arising in this article (see~\S\S\ref{ap g2},\ref{ap 7rep}).
More precisely, in~\S\ref{ap g2} we give an explicit construction of $\g_2$ and its root system,
and in~\S\ref{ap 7rep} we study the $7$-dimensional fundamental representation of $G_2$ and
compute explicitly the non-degenerate bilinear symmetric $G_2$-invariant form on this
representation.

\paragraph{Acknowledgments.} The author is grateful to his advisor M.~Finkelberg for very
fruitful discussions. This work	 is supported in part by the Moebius Contest Foundation for Young Scientists.

%\newpage
\section{Equivariant Slice} \label{es}

%----------------------------------------------------------------------------------------

\subsection{Twisted cotangent bundle of a homogeneous space} \label{tcb}

Let $G$ be a connected linear algebraic group over $\C$ with Lie algebra $\g$, let $H \sb G$ be its closed connected subgroup with Lie algebra $\h$, let $\psi \in \h^\ast$ be an $H$-invariant covector. In this section we explicitly construct twisted cotangent bundle $T_\psi^\ast(G/H)$, associated with $\psi$. Then we study the Hamiltonian right $Q$-action on $T_\psi^\ast(G/H)$, where $Q \sb G$ is a connected closed subgroup, and the zero level $\mu_Q^{-1}(0)$ of the moment map $\mu_Q$. Finally, we study two-sided Hamiltonian actions and the zero levels of their moment maps.

In more detail, in \S\ref{tcb constr} we consider the right Hamiltonian $H$-action on the cotangent bundle $T^\ast G$. Let $\mu_H \colon T^\ast G \to \h^\ast$ be its moment map. We construct the geometric quotient of $\mu_H^{-1}(\psi)$ with respect to the right $H$-action. Next we prove that the resulting variety is smooth symplectic. In \S\ref{tcb la} we study the left $G$-action on $T_\psi^\ast(G/H)$. Then we consider the zero level $\mu_Q^{-1}(0) \sb T_\psi^\ast(G/H)$ and prove that this variety is the union of conormal bundles to so-called \emph{relevant} orbits of the left $Q$-action on $G/H$. In \S\ref{tcb 2sided} we add to the left $Q$-action on $T_\psi^\ast(G/H)$ the right $S$-action, where $S \sb N_G(H)$ is a connected closed subgroup, then show that this action is Hamiltonian, and study the relation between the zero levels of the moment maps with respect to the Hamiltonian $Q$- and $Q \times S$-actions on $T_\psi^\ast(G/H \rtimes S)$ and $T_\psi^\ast(G/H)$ respectively.

%--------------------------------------------

\subsubsection{Construction of $T_\psi^\ast(G/H)$} \label{tcb constr}

Recall that for any smooth algebraic variety $X$ its cotangent bundle $T^\ast X$ has a natural symplectic structure, defined as follows. The tangent bundle $T_{(x, \a)}(T^\ast X)$ can be viewed as $T_x X \oplus T^\ast_\a X$. One can define Liouville $1$-form $\la$ on $T^\ast X$ as follows: $\la_{(x, \a)}(v, \b) = \b(v)$. In the analytic setup in local coordinates $(p_1, \ld, p_n, q_1, \ld, q_n)$ on $T^\ast X$, where $(p_1, \ld, p_n)$ are local coordinates on $X$, and $(q_1, \ld, q_n)$ corresponds to coefficients at $dp_1, \ld, dp_n$ respectively, we have $\la = p_1 dq_1 + \ld + p_n dq_n$. Set $\om = d \la$ (note that $\om = dp_1 \w dq_1 + \ld + dp_n \w dq_n$ in local coordinates). It is a natural symplectic form on $T^\ast X$. If we identify $T_{(x, \a)} (T^\ast X)$ with $T_x X \oplus T_x^\ast X$ in a natural way, then $\om((v_1, \b_1), (v_2, \b_2)) = \b_1(v_2) - \b_2(v_1)$ under this identification.

Suppose that $X$ is an $H$-manifold, where $H$ acts on the right. Let $a \colon X \times H \to X$ be the action map. Then for any $x \in \h$ one can define the vector field $u_{X, x}^R$ on $X$ as follows: $u_{X, x}^R(p) = (d_{(p, 1)} a)(0, x)$. In other words, it is the velocity field of the right $H$-action with respect to $x \in \h$.

For a vector field $v$ on $X$ one can define a regular fiber-wise linear function $\s_1(v)$ on $T^\ast X$ as follows: $\s_1(v)(p, \a) = \a(v(p))$, where $v(p) \in T_p X$ is the value of $v$ at a point $p$.

\begin{fact}[{\cite[Proposition 1.4.8]{cg}}] \label{tcb 1}
	Let $X$ be a $G$-manifold, where $G$ acts on the right. Then the natural right $G$-action on $T^\ast X$ is Hamiltonian with Hamiltonian $x \mapsto H_x^R = \s_1(u_{X, x}^R)$.
\end{fact}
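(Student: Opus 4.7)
The plan is to apply Cartan's magic formula to the tautological Liouville $1$-form $\la$. By construction, the cotangent lift of any diffeomorphism of $X$ preserves $\la$; in particular, the natural lift of the right $G$-action to $T^*X$ preserves both $\la$ and the symplectic form $\om = d\la$. Writing $v_x := u^R_{T^*X, x}$ for the velocity vector field on $T^*X$ induced by $x \in \g$, this yields $\mathcal{L}_{v_x}\la = 0$.

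Cartan's formula then gives
\[
0 = \mathcal{L}_{v_x}\la = \iota_{v_x}\om + d(\iota_{v_x}\la),
\]
so $\iota_{v_x}\om = -d(\iota_{v_x}\la)$, which exhibits $v_x$ as a Hamiltonian vector field with Hamiltonian $\iota_{v_x}\la$ (up to the sign convention). To identify this function with $H^R_x$, I would use that the lifted action covers the original action along the cotangent projection $\pi\colon T^*X \to X$, so $\pi_* v_x = u^R_{X,x}$; combined with the defining property $\la_{(p,\alpha)}(V) = \alpha(\pi_* V)$ this gives
\[
(\iota_{v_x}\la)(p,\alpha) = \alpha\bigl(u^R_{X,x}(p)\bigr) = \s_1(u^R_{X,x})(p,\alpha) = H^R_x(p,\alpha).
\]

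To upgrade this symplectic action to a genuinely Hamiltonian one, I would finally verify that $x \mapsto H^R_x$ is a Lie algebra homomorphism $\g \to \mathcal{O}(T^*X)$. This factors as the composition of $x \mapsto u^R_{X,x}$, which is a Lie algebra homomorphism precisely because the action is on the right (via the identification of $u^R_{X,x}$ with the pushforward of a left-invariant vector field on $G$ under the orbit map $g \mapsto p \cdot g$, together with $[X^L_x, X^L_y] = X^L_{[x,y]}$), with $\s_1$, which sends the Lie bracket of vector fields on $X$ to the canonical Poisson bracket on $T^*X$ (a direct check in Darboux coordinates, where $\s_1(\sum v_i \p_{p_i}) = \sum v_i q_i$ and $\{q_i, p_j\} = \delta_{ij}$). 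The only real subtlety I anticipate is keeping sign conventions straight between left and right actions; the geometry itself is entirely standard.
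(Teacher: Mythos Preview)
Your argument is correct and is precisely the standard proof via Cartan's magic formula applied to the Liouville form. Note, however, that the paper does not supply its own proof of this statement: it is recorded as a \emph{Fact} with a citation to \cite[Proposition~1.4.8]{cg}, so there is nothing in the paper to compare against beyond the reference, whose proof your outline faithfully reproduces.
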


Recall that for a Hamiltonian right $H$-action on a smooth variety $M$, one defines the moment map $\mu_H \colon M \to \g^\ast$ as follows: $\mu_H(m) \colon x \mapsto H_x^R(m)$, where $H_x$ is the Hamiltonian corresponding to $x \in \h$. Since $H$ is connected, the moment map $\mu_H$ is
$H$-equivariant~\cite[Lemma 1.4.2]{cg}.

If $X = G$, then the right $H$-action on $T^\ast G$ is Hamiltonian. Consider the corresponding moment map $\mu_H \colon T^\ast G \to \g^\ast$ and put $Y = \mu_H^{-1}(\psi)$. Explicitly, $Y = \{(g, \a) \in T^\ast G : \a(u_{G, x}^R(g)) = \psi(x)\}$. One can easily see that the natural projection $Y \to G$ is an affine bundle with $(\dim G - \dim H)$-dimensional fibers.

Recall that the left $G$-action on $G$ (and hence on $T^\ast G$) commutes with the right $H$-action.

\begin{lem} \label{tcb 2}
$Y = \mu_H^{-1}(\psi)$ is invariant under the left $G$-action.
\end{lem}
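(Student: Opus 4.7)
The plan is to upgrade the statement by proving that the moment map $\mu_H\colon T^\ast G\to\h^\ast$ is itself constant along every left $G$-orbit on $T^\ast G$; invariance of the single fibre $Y=\mu_H^{-1}(\psi)$ is then automatic. By definition of $\mu_H$, this reduces to checking, for each $x\in\h$, that the Hamiltonian $H_x^R=\s_1(u_{G,x}^R)$ supplied by Fact~\ref{tcb 1} is left $G$-invariant as a regular function on $T^\ast G$.

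The key input I would use is that the velocity field of the right action on $G$ is precisely a left-invariant vector field, namely $u_{G,x}^R(g)=(d_e L_g)(x)=v_x^L(g)$; consequently $v_x^L$ is preserved by every left translation $L_{g_0}$ on $G$. Granting this, I would unwind the cotangent lift of $L_{g_0}$, which sends $(g,\a)$ to $(g_0g,\tilde\a)$ with $\tilde\a$ determined by $\tilde\a\circ (d_{g_0g}L_{g_0^{-1}})=\a$, and plug it into the definition of $H_x^R$. The expression collapses after a single application of left-invariance of $v_x^L$, giving $H_x^R\circ \Phi_{g_0}=H_x^R$. Assembling this over $x\in\h$ produces the left $G$-invariance of $\mu_H$, and hence of $Y$.

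I do not foresee any serious obstacle: the lemma is a short bookkeeping exercise concerning cotangent lifts of two commuting actions on $G$. The one mild pitfall is the sign and direction convention used for the Liouville form and for the cotangent lift of a diffeomorphism, which must be fixed consistently so that the pairing $H_x^R(g,\a)=\a(v_x^L(g))$ is unambiguous. It is worth noting that the $H$-invariance hypothesis on $\psi$ is not used for this particular statement; it enters only at the next step, where one needs the right $H$-action itself to preserve $Y$ in order to form the geometric quotient defining $T_\psi^\ast(G/H)$.
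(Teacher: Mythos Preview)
Your proposal is correct and follows essentially the same route as the paper: both arguments compute the cotangent lift of a left translation $L_q$ and use that $u_{G,x}^R$ is left-invariant (equivalently, that the left and right actions commute) to conclude $\tilde\a(u_{G,x}^R(qg))=\a(u_{G,x}^R(g))$. Your formulation is marginally stronger in that you show all of $\mu_H$ is left $G$-invariant rather than just the single fibre over $\psi$, and your remark that the $H$-invariance of $\psi$ plays no role here is accurate and worth noting.
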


\begin{proof}
The idea of the proof is that $G$- and $H$-actions commute, so $G$-action preserves $u_{G, x}^R$ for any $x \in \h$ and thus preserves $Y = \mu_H^{-1}(\psi)$. More precisely, let $q \in G$, $(g, \a) \in Y$. By definition of $Y$ we have $\a(u_{G, x}^R(g)) = \psi(x)$. Set $L_q \colon G \to G$, $g \mapsto qg$. Then $(d_g L_q)^\ast \colon T_{qg}^\ast G \to T_{g}^\ast G$, so $q \colon (g, \a) \mapsto (qg, (d_{qg} L_{q^{-1}})^\ast(\a)) =: (qg, \t{\a})$. Since $G$- and $H$-actions commute, $(d_g L_q)(u_{G, x}^R(g)) = u_{G, x}^R(qg)$. So we have 
\[\t{\a}(u_{G, x}^R(qg)) = (d_{qg} L_{q^{-1}})^\ast(\a)[d_g L_q(u_{G, x}^R(g))] = \a(d_{qg} L_{q^{-1}} \circ d_g L_q(u_{G, x}^R(g))) = \a(u_{G, x}^R(g)) = \psi(x),\]
and we are done.
\end{proof}

Since $\mu_H$ is $H$-equivariant and $\psi$ is $H$-invariant, $Y$ is invariant under the right
$H$-action.

\begin{defi}[{\cite[\S4.2]{pv}}] \label{tcb 3}
Let $G$ be a connected linear algebraic group, $X$ be a $G$-variety. Then the variety $Y$ together with a morphism $\pi \colon X \to Y$ is called \emph{a geometric quotient}, if 
\begin{itemize}
\item[(1)] $\pi$ is surjective and its fibers are exactly $G$-orbits on $X$;
\item[(2)] $Y$ has a quotient topology;
\item[(3)] $\O_Y = \pi_\ast(\O_X^G)$, i.e. $\O_Y(U) = \O_X(\pi^{-1}(U))^G$ for any open $U \sb Y$.
\end{itemize}
\end{defi}

\begin{rem}[{\cite[Theorem~4.2]{pv}}] \label{tcb 4}
If $Y$ is normal and $X$ is irreducible, it is enough to require only (1) in Definition \ref{tcb 3}.
\end{rem}

\begin{fact}[{Rosenlicht, \cite[Theorem~4.4]{pv}}] \label{tcb 5}
If $X$ is any irreducible $G$-variety, then there exists $G$-stable non-empty open subset $U \sb X$, which has a geometric $G$-quotient.
\end{fact}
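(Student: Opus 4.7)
The plan is to follow the standard Rosenlicht strategy: realize a rational quotient from $X$ to a model of the field of $G$-invariant rational functions, and then shrink to a $G$-stable open subset on which this rational map becomes a true geometric quotient.

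First, I would argue that $K := \C(X)^G$ is finitely generated over $\C$ as a field (a standard fact in the theory of algebraic group actions on irreducible varieties). Picking generators $f_1,\ld,f_n \in K$ and a variety $Y$ with $\C(Y) \cong K$, I obtain a dominant, $G$-invariant rational map $\phi : X \dashrightarrow Y$; its indeterminacy locus is a proper closed $G$-stable subset of $X$, so replacing $X$ by the complementary $G$-stable open subset makes $\phi$ a morphism.

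Next, I would analyze the fibers. By Chevalley's theorem the generic orbit dimension equals $\dim X - \text{trdeg}_\C K = \dim X - \dim Y$, so the generic fiber of $\phi$, being $G$-stable of the same dimension, consists of finitely many orbits of maximal dimension. Semicontinuity of orbit dimension then implies that, after removing a further proper $G$-stable closed subset of $X$, every fiber is a finite union of full orbits of this maximal dimension.

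The main obstacle is the \emph{separation} step: ensuring each fiber becomes a single orbit after one more shrinking. My approach would be to choose a rational transversal $Z \sb X$ of dimension $\dim Y$ meeting the generic orbit in a single point, so that $\phi|_Z$ is generically finite of some degree $d$, and then to argue $d = 1$. If $d > 1$, symmetric functions of the $d$ Galois conjugates of a sufficiently generic rational function on $Z$ would produce elements of $\C(X)^G$ outside $\phi^\ast K$, contradicting $K = \C(X)^G$. Once $d = 1$, I restrict to a $G$-saturated open $U \sb X$ whose image $V \sb Y$ is open; by construction $\phi|_U : U \to V$ is surjective with fibers equal to $G$-orbits. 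Conditions (1)--(3) of Definition~\ref{tcb 3} then follow: surjectivity and the orbit-fiber condition hold by construction, and after a final shrinking of $V$ to an affine open on which $\phi|_U$ is flat (hence open), both the quotient topology on $V$ and the sheaf identity $\O_V = (\phi|_U)_\ast \O_U^G$ follow from $\C(V) = K$ together with normality of $V$.
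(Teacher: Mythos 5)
The paper does not actually prove this statement: it is recorded as a \emph{Fact}, cited from \cite[Theorem~4.4]{pv}, and the remark following it explains that it is deduced from Rosenlicht's theorem on rational quotients together with \cite[Theorems~2.3 and 4.2]{pv}. Your proposal is therefore an attempt to reprove Rosenlicht's theorem from scratch, and it has a genuine gap exactly where the whole difficulty of that theorem is concentrated. First, the equality $\max_x \dim Gx = \dim X - \mathrm{trdeg}_\C\,\C(X)^G$ is not Chevalley's theorem: only the inequality $\mathrm{trdeg}_\C\,\C(X)^G \mr \dim X - \max_x \dim Gx$ is elementary (invariants are constant on orbit closures, so a generic fibre of $\phi$ contains a generic orbit closure). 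The reverse inequality --- that there exist \emph{enough} rational invariants --- is essentially the content of Rosenlicht's theorem, so invoking it as an input is circular.

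Second, and more seriously, the separating transversal $Z$ you postulate need not exist, and the quantity it would control is the wrong one. Take $G$ finite acting faithfully on an irreducible $X$: then $\mathrm{trdeg}_\C\,\C(X)^G = \dim X$, so any $Z$ of dimension $\dim Y = \dim X$ is dense in $X$, hence contains a dense open subset and meets every sufficiently general orbit in all of its $|G|$ points rather than in one; moreover $\deg\phi = |G| > 1$ even though the geometric quotient exists and the generic fibre is a single orbit. So ``$d=1$'' is not the condition you need --- the degree of $\phi$ counts points of the generic fibre, not orbits in it. The Galois-conjugate step is also off: symmetric functions of the conjugates of an element of $\C(Z)$ over $K$ land \emph{in} $K$ (char.\ $0$), so they cannot produce invariants outside $\phi^\ast K$. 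The genuine separation argument (as in \cite[Theorem~2.3]{pv} or Rosenlicht's paper \cite{r}) instead works with the closure of the graph $\{(x,gx)\}\sb X\times X$ of the orbit equivalence relation and shows it agrees generically with the locus where all rational invariants coincide; no quasi-section of the quotient map is used, and in general none exists. Your final step --- that over a normal base the set-theoretic condition ``fibres are orbits'' upgrades to a geometric quotient --- is correct and is precisely Remark~\ref{tcb 4} of the paper, i.e.\ \cite[Theorem~4.2]{pv}.
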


\begin{rem}
There is a notion of so called \emph{rational quotient} (see \cite[\S4.2]{pv}). The original theorem of Rosenlicht (see \cite[Theorem 2]{r}) concerns the existence of rational quotients. However, using \cite[Theorem~2.3, Theorem 4.2]{pv}, and rational quotients, one can quite easily prove Fact~\ref{tcb 5}.
\end{rem}

Now we return to our initial notation. Then next result is the most important in this subsection.

\begin{prop} \label{tcb 6}
Let $H$ act on $\mu_H^{-1}(\psi)$ on the right. Then there exists a smooth geometric $H$-quotient.
\end{prop}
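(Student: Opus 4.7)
My plan is to trivialize $T^\ast G$ by left translations and to realize $Y/H$ explicitly as an étale-locally trivial affine bundle over $G/H$. The map $(g,\a)\mapsto(g,(d_eL_g)^\ast\a)$ gives an isomorphism $T^\ast G \iso G\times\g^\ast$; under this, the defining condition $\a(u_{G,x}^R(g))=\psi(x)$ for all $x\in\h$ becomes $\b|_\h=\psi$. Hence $Y$ is identified with $G\times(\psi+\h^\perp)$, where $\h^\perp\sb\g^\ast$ is the annihilator of $\h$.

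In this trivialization, differentiating the identity $R_{h^{-1}}\circ L_{gh}=L_g\circ c_h$ at $e\in G$, where $c_h$ denotes conjugation by $h$, shows that the right $H$-action takes the form $(g,\b)\cdot h=(gh,\b\circ\Ad(h))$. The $H$-invariance of $\psi$, combined with the $\Ad(H)$-stability of $\h$, guarantees that this twisted action preserves the affine subspace $\psi+\h^\perp$ of $\g^\ast$ and hence preserves $Y$; the action is moreover free, since the right $H$-action on $G$ is.

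The construction of the quotient is then bundle-theoretic. By Chevalley's theorem, $G/H$ exists as a smooth quasi-projective variety and $G\to G/H$ is a principal $H$-bundle, locally trivial in the étale topology. The associated fiber bundle $G\times^H(\psi+\h^\perp)$ over $G/H$ is therefore a smooth algebraic variety whose fibers are affine spaces of dimension $\dim G-\dim H$, and the orbit map $Y\to G\times^H(\psi+\h^\perp)$ is the desired geometric $H$-quotient. Conditions (1)--(3) of Definition~\ref{tcb 3} can be checked étale-locally on $G/H$, where everything reduces to a trivial product.

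The main technical point will be passing from a set-theoretic orbit space to a genuine geometric quotient of schemes, and in particular verifying condition (3) on invariant functions. To avoid étale descent altogether, an alternative route is to combine Fact~\ref{tcb 5} (Rosenlicht), which yields a geometric quotient on some $H$-stable open $U\sb Y$, with the commuting left $G$-action from Lemma~\ref{tcb 2}: translating $U$ by elements of $G$ covers $Y$ with $H$-stable opens admitting geometric quotients, and these patch to a smooth global quotient. Smoothness is automatic from the local product structure $U\times(\psi+\h^\perp)$.
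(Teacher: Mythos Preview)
Your main argument is correct and is genuinely different from the paper's. Trivializing $T^\ast G$ by left translations to identify $Y$ with $G\times(\psi+\h^\perp)$, computing the right $H$-action as $(g,\b)\cdot h=(gh,\Ad(h)^\ast\b)$, and then realizing the quotient as the associated affine bundle $G\times^H(\psi+\h^\perp)$ over $G/H$ is clean and conceptual; smoothness, separatedness, and the conditions of Definition~\ref{tcb 3} all come for free from the \'etale-local product structure. The paper instead argues by hand: it invokes Rosenlicht to get a geometric quotient on some $H$-stable open $U\sb Y$, then shows every point of $Y$ lies in a translate of $U$, checks separatedness by a dimension count on $\o{\De}_Z\setminus\De_Z$, and deduces smoothness from homogeneity. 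Your approach is shorter and more structural; the paper's avoids appealing to descent and the associated bundle construction as black boxes.

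There is, however, a gap in your \emph{alternative route} (the last paragraph). In the trivialization $Y\simeq G\times(\psi+\h^\perp)$ the left $G$-action moves only the first factor, so the $G$-translates of an open $U\sb Y$ cover only $G\times\mathrm{pr}_2(U)$, and $\mathrm{pr}_2(U)$ need not be all of $\psi+\h^\perp$. The paper handles exactly this point with a second kind of translation: given $(\se,\a)\in U$ and any other $(\se,\b)$ in the fiber over $[\se]$, it chooses a $1$-form $\nu$ on an open $V\sb G/H$ with $p^\ast(\nu)|_\se=\a-\b$, notes that adding the $H$-invariant form $\t\nu=p^\ast(\nu)$ is an $H$-equivariant automorphism of $Y$ over $p^{-1}(V)$, and concludes that $U-\t\nu$ is another open with a geometric quotient containing $(\se,\b)$. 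If you want to keep the alternative route, you need this fiberwise translation as well; otherwise you can simply drop that paragraph, since your associated-bundle argument is self-contained.
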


\begin{rem}
	The statements of Prop.~\ref{tcb 6} and Lemma~\ref{tcb 8} are very similar to parts i) and ii) respectively of a special case of Marsden-Weinstein-Meyer theorem (see \cite[Theorem~9.4]{h}). In \S\ref{tcb la} we prove (see Lemma~\ref{tcb 9}) that the natural left $G$-action on $\mu_H^{-1}(\psi)/H$ is Hamiltonian. So we get an example of so-called \emph{affine Hamiltonian Lagrangian $G$-bundle} studied in details in \cite{l} in the holomorphic category. In particular, the geometric quotient $\mu_H^{-1}(\psi)/H$ is introduced in \cite[\S(2.4)]{l}, see also the discussion in \cite[\S2.3]{c}. However, the proof of Prop.~\ref{tcb 6} we give is original.
\end{rem}

\begin{proof}
Denote by $Z$ the set of $H$-orbits in $Y = \mu_H^{-1}(\psi)$. There is a natural surjective map $\pi \colon Y \to Z$. Then one can equip $Z$ with the quotient topology and a sheaf of $\C$-algebras $\O_Z = \pi_\ast(\O_{T^\ast G})^H$. We claim that $(Z, \O_Z)$ is a geometric quotient of $Y$. The only thing we need to check is that the ringed space $(Z, \O_Z)$ is a variety. First we show that any point $z \in Z$ has an open neighborhood that is isomorphic as a ringed space to some affine variety. Second we will show that $(Z, \O_Z)$ is separated. Finally, we will show that $Z$ is smooth. \newline

By Rosenlicht theorem there is a non-empty open $H$-stable subset $U \sb Y$, which has a geometric quotient. It means that $(\pi(U), \O_Z |_{\pi(U)})$ is a variety, so any point $z \in \pi(U)$ has an open neighborhood isomorphic as a ringed space to an affine variety. Since $G$- and $H$-actions commute,  for any $H$-stable $U \sb Y$ that has a geometric $H$-quotient, and any $g \in G$, the open subset $g \cd U \sb Y$ also has a geometric $H$-quotient. Without loss of generality we can assume $(\se, \a) \in U$, where $\se$ stands to the neutral element of $G$.

Take a point $z \in Z$, let $y = (g, \b) \in \pi^{-1}(z)$. Since $G$ acts transitively on $G$ on the left, then without loss of generality $g = \se$. Note that $T_\se^\ast(G/H) \cong \Ann_{\g^\ast}(\h)$ in a natural way. There is a natural projection $p \colon G \to G/H$. Since $(\a - \b)(u_{G, x}^R) = 0$ for any $x \in \h$, one can easily find an open subset $V \sb G/H$ such that $[\se] \in V$, and an algebraic $1$-form $\nu$ on $V$ such that $p^\ast(\nu) |_\se = \a - \b$. Set $\t{\nu} = p^\ast(\nu)$; it is an $H$-invariant $1$-form on $p^{-1}(V)$. Also $\t{\nu} + Y  = Y$ over $V$, because any $H$-orbit goes to a single point under the projection $p$, so adding $\t{\nu}$ does not change the value of the 1-form on the tangent vector to an $H$-orbit. Without loss of generality we may suppose $U \sb p^{-1}(V)$. Since $U$ and $U - \t{\nu}$ are isomorphic as $H$-varieties, $U - \t{\b}$ also has a geometric quotient. But now we have $y \in U - \t{\nu}$. So $y$ lies in an open subset possessing a geometric quotient. Thus $z$ has an open neighborhood isomorphic to an affine variety. \newline

Second, let us show that $Z$ is separated. We need to prove that the diagonal $\De_Z \sb Z \times Z$ is closed. Suppose $(z_1, z_2) \sb \o{\De}_Z$ such that $z_1 \ne z_2$. There is a projection $\rho \colon Z \to G/H$, coming from the natural projection $Y \to G/H$. First consider the case $\rho(z_1) \ne \rho(z_2)$. Since $G/H \times G/H$ is a variety, $V = G/H \times G/H - \De_{G/H}$ is open and contains $(\rho(z_1), \rho(z_2))$. Its preimage under $\rho \times \rho$ does not contain $\De_Z$, contains $(z_1, z_2)$ and is open.
So the case $\rho(z_1) \ne \rho(z_2)$ is impossible.

Now consider the case $\rho(z_1) = \rho(z_2)$. Without loss of generality $z_1  = ([\se], \a_1)$, $z_2 = ([\se], \a_2)$. Denote $D = \o{\De}_Z - \De_Z$. As was shown in the first part of the proof, for any $\nu' \in \Ann_\g(\h)$ we may suppose that there is an $H$-invariant $1$-form $\t{\nu}$ on $U$ such that $\t{\nu}|_\se = \nu'$ and $\t{\nu} + Y = Y$ over $U$. Thus $\t{\nu} + U$ is another open neighborhood, which has a geometric quotient. So $(([\se], \a_1 + \nu'), ([\se], \a_2 + \nu')) \in D$. Thus $D$ contains $\{(([\se], \b_1), ([\se], \b_2)) : \b_2 - \b_1 = \a_2 - \a_1\}$. Such a subset has the same dimension as an affine fiber of $Y$ over $G/H$, i.e. $\dim G - \dim H$. Varying the first coordinate over all $G/H$, we acquire the additional dimension $\dim G/H$. So $\dim D \br 2(\dim G - \dim H) = \dim Z = \dim \De_Z$. This contradicts $D = \o{\De}_Z - \De_Z$. \newline

Finally, let us show that $Z$ is smooth. The set of regular points of any variety is non-empty and open. Since $G$ acts transitively on the set of fibers of the projection $Z \to G/H$, it is enough to prove that any two points $z_1, z_2 \in Z$ in the same fiber over $[\se]$ have isomorphic open neighborhoods. As was shown in the first part of the proof, we may assume that there is an $H$-invariant $1$-form $\t{\nu}$ on $U$ such that $\t{\nu}|_\se + z_1 = z_2$ and $\t{\nu} + Y = Y$ over $U$. Clearly adding such a form $\t{\nu}$ gives an isomorphism of geometric $H$-quotients, so $z_2$ and $z_1$ have isomorphic open neighborhoods.
\end{proof}

We denote the resulting geometric quotient of $Y = \mu_H^{-1}(\psi)$ by $T_\psi^\ast(G/H) = \scrX$
%(so this $\scrX$ isn't related to $\scrX$ from \ref{intr form} yet)
and call it the $\psi$-\emph{twisted cotangent bundle} of $G/H$. We have the natural projection $\pi \colon Y \to T_\psi^\ast(G/H)$.

\begin{ex} \label{tcb 7}
If $\psi = 0$, then there is a natural isomorphism $\scrX \cong T^\ast(G/H)$. 
\end{ex}

\begin{proof}
In case $\psi = 0$ we have $Y = \mu_H^{-1}(0) = \{(g, \a) : \a(u_{G, x}^R(g)) = 0 \: \forall x \in \h\}$. Note that $Y$ is a locally trivial vector bundle over $G$. For any fixed $g$ there is a natural pairing of the fiber of $Y$ over $g$ and $T_{[g]}(G/H)$. So there is a natural surjection $F \colon Y \to T^\ast(G/H)$. If we suppose that $F$ is regular, then by~Definition~\ref{tcb 3}(3), the morphism $F$ induces a regular bijective map $\t{F} \colon \scrX \to T^\ast(G/H)$. Clearly $Y$ is irreducible. Since $T^\ast(G/H)$ is smooth, it is normal. Since $\char \C = 0$ and $\t{F}$ is bijective, it is birational by \cite[Theorem~5.1.6]{s}. By Zariski main theorem~\cite[Theorem~5.2.8]{s} $\t{F}$ is an isomorphism and we are done.

It remains to show that $F$ is regular. Note that $T^\ast(G/H)$ is locally trivial vector bundle. Hence for any point $z \in G/H$ there is an open subset $V \ni z$ such that $T^\ast(G/H)$ over $V$ is trivial, i.e.~there are $d = \dim (G/H)$ regular $1$-forms $\a_1, \ld, \a_d$ on $V$ such that any regular $1$-form on $V$ is of the form $f_1 \a_1 + \ld + f_d \a_d$, where $f_j \in \O_{G/H}(V)$. Let $\b_j$ be the pullback of the regular $1$-form $\a_j$ under the projection $p \colon G \to G/H$. Then any section of $Y$ over $p^{-1}(V)$ can be presented in the form $g_1 \b_1 + \ld + g_d \b_d$, where $g_j \in \O_G(p^{-1}(V))$, that implies $Y |_{p^{-1}(V)} \cong V \times \C^d$. So $F$ looks like $p \times \Id \colon p^{-1}(V) \times \C^d \to V \times \C^d$ in the chosen open neighborhoods. Clearly such a map is regular.
\end{proof}

\begin{lem} \label{tcb 8}
The natural symplectic structure of $T^\ast G$ descends to the variety $\scrX = T_\psi^\ast(G/H)$.
\end{lem}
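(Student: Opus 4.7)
The plan is to apply the standard symplectic reduction construction to the Hamiltonian right $H$-action on $(T^\ast G,\om)$ at the invariant value $\psi$: I would show that $\om|_Y$ is $H$-invariant and that its kernel at every point equals the tangent space to the $H$-orbit, so that it passes through the geometric quotient $\pi\colon Y\to\scrX$ of Proposition~\ref{tcb 6} to a well-defined closed non-degenerate $2$-form.

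First I would observe that $\om$ is $H$-invariant on all of $T^\ast G$: the right $H$-action on $T^\ast G$ is induced from the right $H$-action on $G$, which preserves the Liouville $1$-form $\la$ tautologically, and hence preserves $\om=d\la$. The restriction $\om|_Y$ is therefore $H$-invariant, and this is all that is needed to descend it to a $2$-form on $\scrX$ once its kernel is understood, because sections of $\La^2 T^\ast\scrX$ correspond to $H$-invariant $2$-forms on $Y$ that vanish on vectors tangent to $H$-orbits (using that $\pi$ is a geometric $H$-quotient).

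The core step is computing $\ker(\om|_{T_y Y})$ at $y\in Y$. By Fact~\ref{tcb 1} the Hamiltonian of $x\in\h$ is $H_x^R=\s_1(u_{G,x}^R)$, so $\om_y(u_{G,x}^R(y),v)=dH_x^R(v)=d\lan\mu_H(\cd),x\ran(v)$ for any $v\in T_y(T^\ast G)$. This vanishes exactly when $v\in T_y Y=\ker d_y\mu_H$, giving two inclusions: on the one hand $T_y(H\cd y)\sb T_y Y\cap (T_y Y)^\om$, so tangents to $H$-orbits lie in the kernel of $\om|_Y$; on the other hand $(T_y Y)^\om\sb T_y(H\cd y)$ follows by a dimension count, since $T_y Y$ has codimension $\dim H$ in $T_y(T^\ast G)$, hence its symplectic annihilator has dimension $\dim H=\dim T_y(H\cd y)$. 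Combining, $\ker(\om|_{T_y Y})=T_y(H\cd y)$, which is precisely the vertical tangent space of the submersion $\pi$.

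Granted this, the descended $2$-form $\om_\scrX$ on $\scrX$ is uniquely defined by $\pi^\ast\om_\scrX=\om|_Y$, automatically closed because $\pi^\ast(d\om_\scrX)=d(\om|_Y)=0$ and $\pi^\ast$ is injective on forms since $\pi$ is a submersion onto a smooth variety, and non-degenerate because $\ker(\om|_Y)$ coincides with the relative tangent space of $\pi$. The main obstacle I anticipate is the dimension count underlying the kernel computation: it implicitly requires that the $H$-action on $Y$ be locally free, which I would justify by combining the smoothness of $\scrX$ and the equality $\dim\scrX=2(\dim G-\dim H)$ from Proposition~\ref{tcb 6} with the fact that the fibers of $\pi$ are $H$-orbits, so they are necessarily $\dim H$-dimensional and the stabilizers are $0$-dimensional.
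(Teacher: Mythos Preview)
Your proof is correct and follows the standard Marsden--Weinstein reduction argument; the paper's proof covers the same ground but with more explicit, coordinate-based computations. For non-degeneracy, you use the clean dimension count $(T_yY)^\om = T_y(H\cdot y)$, whereas the paper does a case analysis in the decomposition $T_y(T^\ast G) \cong T_gG \oplus T_g^\ast G$, producing an explicit partner for each nonzero class. For closedness and regularity, you invoke injectivity of $\pi^\ast$ on forms (using that $\pi$ is a smooth submersion onto a smooth base), while the paper instead constructs an explicit Liouville $1$-form $\t\la$ on $\scrX$ in local trivializations and checks $\pi^\ast\t\la = \la|_Y$, so that $\t\om = d\t\la$ is visibly regular and closed; this extra step yields exactness of $\t\om$, which your argument does not produce but which the lemma does not require.

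One simplification you could make: rather than deducing local freeness from the dimension of $\scrX$, just observe that the right $H$-action on $G$ itself is free, hence free on $T^\ast G$ and on $Y$. This also immediately gives that $d_y\mu_H$ is surjective (so $Y$ is smooth of codimension $\dim H$), via the pairing $d_y\mu_H(v)(x) = \om_y(u^R_{T^\ast G, x}(y), v)$ and non-degeneracy of $\om$, avoiding the slightly circular route through Proposition~\ref{tcb 6}.
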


\begin{proof}
  Recall that $T^\ast G$ carries the natural symplectic form $\om$. For any $y \in Y$ denote by $\bO_y$ the $H$-orbit through $y$. Then $T_{\pi(y)} \scrX \cong T_yY / T_y \bO_y$. To show that the $2$-form $\om_y$
  descends to the $2$-form $\t{\om}_y$ on $T_yY / T_y \bO_y$ it is enough to check that for any $v \in T_y Y, w \in T_y \bO_y$ we have $\om_y(v, w) = 0$. Note that $T_y \bO_y = \{u_{T^\ast G, x}^R(y) : x \in h\}$, so $w = u_{T^\ast G, x}^R(y)$ for some $x \in \h$. We have $\om_y(\cd, u_{T^\ast G, x}^R(y)) = d_y H_x^R$ by definition of Hamiltonian action (cf. \cite[Definition~1.4.1, \S 1.2]{cg}). So we need to show that $d_y H_x^R(v) = 0$ for any $v \in T_y Y$. But $H_x^R |_Y \equiv \psi(x) = const$, so $d_y H_x^R  = 0$ on $T_y Y$. So we actually get some $2$-form $\t{\om}$ on $\scrX$. \newline

  Let us show that $\t{\om}$ is non-degenerate. Let $y = (g, \a) \in T_y Y$. We have a natural
  identification $T_y (T^\ast G)\cong T_g G \op T_g^\ast G$. Consider any $(v, \b) \in T_y Y$. First suppose that $v$ does not lie in the tangent space to the right $H$-orbit on $G$ passing through the point $g$. It means that $v \notin \Span_{x \in \h}(u_{G, x}^R)$, so one can find $\ga$ in the tangent space to the fiber of $Y$ over $g$ (that equals $\{\ga \in T_g^\ast G : \ga(u_{G, x}^R(g)) = 0 \text{ for any $x \in \h$}\}$) such that $\ga(v) \ne 0$. So $\om((v, \b), (0, \ga)) \ne 0$. Second, consider the case when $v \in \Span_{x \in \h}(u_{G, x}^R(g))$. In this case there is $w \in T_y \bO_y$ such that its projection to $T_g G$ equals $v$. Hence we may assume that $v = 0$, because adding vectors from the tangent space to the $H$-orbit does not change the value of the $2$-form. Since the natural projection $T_y Y \to T_g G$ is clearly surjective, one can take $(v', \b') \in T_y Y$ such that $\b(v') \ne 0$ and $\b'$ is a covector. In this case also $\om((0, \b), (v', \b')) \ne 0$. Therefore, $\om$ is non-degenerate. \newline

	It remains to show that $\t{\om}$ is regular and closed. One can consider the Liouville 1-form $\t{\la}$ on $\scrX$ (similar to $\la$ on $T^\ast G$). Indeed, if $y = (g, \a)$, then $T_{\pi(y)} \scrX \cong T_{[g]}(G/H) \oplus \{\b \in T_g^\ast G : \b(u_{G, x}^R) = 0\}$. Thus there is a natural pairing of $T_{[g]}(G/H)$ with $\{\b \in T_g^\ast G : \b(u_{G, x}^R) = 0\}$, that allows us to construct the $1$-form $\t{\la}$. If we suppose that $\t{\la}$ is regular, then $\pi^\ast(\t{\la}) = \la |_Y$, so $\pi^\ast(d \t{\la}) = d \pi^\ast(\t{\la}) = d \la |_Y = \om |_Y$, and hence $d\t{\la} = \t{\om}$. Therefore, $\t{\om}$ is algebraic and closed.

	It remains to show that $1$-form $\t{\la}$ is regular. Any point of $G/H$ has an open neighborhood $V$ such that both $T^\ast(G/H)$ and $T(G/H)$ are trivial over $V$. Then there are $d = \dim(G/H)$ base sections $\a_1, \ld, \a_d$ (resp.~$v_1, \ld, v_d$) of $T^\ast(G/H)$ (resp.~$T(G/H)$) over $V$. Then any section of $T \scrX$ over $U = \rho^{-1}(V)$ (where $\rho \colon \scrX \to G/H$ is the natural projection) is of the form $(g_1 v_1 + \ld + g_d v_d, f_1 \a_1 + \ld + f_d \a_d)$ ($g_j, f_j \in \O_\scrX(U)$). Indeed,  we can present $T_z \scrX$, where $z = ([g], \b)$, as a direct sum $T_{[g]}(G/H) \oplus \{\b \in T_g^\ast G : \b(u_{G, x}^R) = 0\}$. So $T \scrX |_U \cong \C^d \times \C^d$, and in such local coordinates
\[\t{\la} \colon (g_1 v_1 + \ld + g_d v_d, f_1 \a_1 + \ld + f_d \a_d) |_z \mapsto \sum \l_{j = 1}^d f_j(z) g_j(z)\]
so $\t{\la}$ clearly is a regular $1$-form.
\end{proof}

In the next subsection we still denote the natural symplectic form on $T_\psi^\ast(G/H)$ by $\t{\om}$.

%--------------------------------------------

\subsubsection{Left action on $T_\psi^\ast(G/H)$} \label{tcb la}

Recall that there is a natural left $G$-action on $T_\psi^\ast(G/H)$ inherited from the left $G$-action on $\mu_H^{-1}(\psi)$. For any variety $X$ with a left $G$-action, and $x \in \g$ denote by $u_{X, x}^L$ the velocity field of this left $G$-action with respect to $x$. Note that for any $y \in Y = \mu_H^{-1}(\psi)$, the tangent vector $u_{T^\ast G, x}^L(y) = u_{Y, x}^L(y)$ lies in $T_y Y$, since $Y$ is $G$-stable. Similarly to~\S\ref{tcb 1}, the left $G$-action on $T^\ast G$ is Hamiltonian with Hamiltonian $H_x^L = - \s_1(u_{G, x}^L) \in \O(T^\ast G)$. Here we need the negative sign, because the map 
\[\g \to \{\text{symplectic vector fields on $T^\ast G$}\}, \quad x \mapsto \text{velocity field of left $G$-action with respect to $x$}\]
is an anti-homomorphism of Lie algebras, while the same map for right $H$-action is a homomorphism of Lie algebras.

\begin{lem} \label{tcb 9}
For any $x \in \g$, the function $H_x^L$ is $H$-invariant, and the left $G$-action on $\scrX = T_\psi^\ast(G/H)$ is Hamiltonian with Hamiltonian $x \mapsto \pi_\ast(H_x^L |_Y)$.
\end{lem}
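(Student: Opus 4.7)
The plan is to split the lemma into two claims. First, I want to show that $H_x^L$ is invariant under the right $H$-action on $T^\ast G$, so that $H_x^L|_Y$ descends to a regular function on $\scrX = T_\psi^\ast(G/H)$. Second, I want to verify that the descended function is a Hamiltonian for the left $G$-action on $\scrX$.

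For the first claim, the key observation is that $u_{G,x}^L$, being the velocity field of the one-parameter subgroup $\exp(tx)$ acting on $G$ by left translation, is a \emph{right-invariant} vector field on $G$, and in particular invariant under the right $H$-action. Now one checks directly from the defining formula $\s_1(v)(g,\a) = \a(v(g))$ that if $v$ is a right-$H$-invariant vector field on $G$, then $\s_1(v)$ is invariant under the natural lift of the right $H$-action to $T^\ast G$ (the conceptual reason being that this lift preserves the Liouville $1$-form). Hence $H_x^L = -\s_1(u_{G,x}^L)$ is $H$-invariant, and its restriction to $Y = \mu_H^{-1}(\psi)$ descends, by Definition~\ref{tcb 3}(3), to a regular function $\pi_\ast(H_x^L|_Y)$ on $\scrX$.

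For the second claim, I use that $Y$ is $G$-stable (Lemma~\ref{tcb 2}), so $u_{T^\ast G, x}^L$ restricts to a vector field $u_{Y, x}^L$ on $Y$, and that the left $G$-action on $\scrX$ is defined by $g \cd \pi(y) = \pi(g \cd y)$ (well-defined because the left $G$- and right $H$-actions commute). Consequently the induced velocity field $u_{\scrX, x}^L$ is $\pi$-related to $u_{Y, x}^L$. Combining this with Lemma~\ref{tcb 8}, which gives $\pi^\ast \t{\om} = \om|_Y$, and with the Hamiltonian identity $i_{u_{T^\ast G, x}^L}\om = dH_x^L$ on $T^\ast G$ established just before the statement, I obtain
\[\pi^\ast\bigl(i_{u_{\scrX, x}^L}\t{\om}\bigr) = i_{u_{Y, x}^L}(\om|_Y) = (dH_x^L)|_Y = \pi^\ast\bigl(d\,\pi_\ast(H_x^L|_Y)\bigr).\]
Since $\pi$ is a surjective morphism of smooth varieties whose fibers are $H$-orbits of the expected dimension, it is a submersion on a dense open subset, so $\pi^\ast$ is injective on regular $1$-forms. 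This yields the desired identity $i_{u_{\scrX, x}^L}\t{\om} = d\,\pi_\ast(H_x^L|_Y)$.

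The only real subtlety, rather than a serious obstacle, is the bookkeeping of signs: the map $x \mapsto u_{G,x}^L$ is an anti-homomorphism of Lie algebras (as noted in the paragraph preceding the lemma), which is precisely why the convention $H_x^L = -\s_1(u_{G,x}^L)$ is required for $H_x^L$ to be a genuine Hamiltonian with $x \mapsto \pi_\ast(H_x^L|_Y)$ compatible with the Poisson bracket. Once the sign is fixed, the rest is a formal descent argument along the quotient map $\pi$, and linearity in $x$ is manifest from the construction.
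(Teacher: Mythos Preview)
Your proof is correct and follows essentially the same approach as the paper: both arguments establish $H$-invariance of $H_x^L$ from the commutation of left and right actions, then descend the Hamiltonian identity $i_{u^L_x}\om = dH_x^L$ from $T^\ast G$ through $Y$ to $\scrX$. The only cosmetic difference is that the paper verifies the identity pointwise by lifting tangent vectors from $\scrX$ to $Y$, whereas you phrase the same computation globally via $\pi^\ast$ and then appeal to injectivity of $\pi^\ast$ on $1$-forms; since $\pi$ is in fact a submersion everywhere (the right $H$-action on $Y$ is free and $\scrX$ is smooth by Proposition~\ref{tcb 6}), your injectivity step is even slightly stronger than you claim.
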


\begin{proof} 
  Since the right $H$-action and the left $G$-action on $T^\ast G$ commute, $u_{G, x}^L$ is
  $H$-invariant. Hence $H_x^L = - \s_1(u_{G, x}^L)$ defined via $u_{G, x}^L$ is also $H$-invariant.
  So we get a regular function $\pi_\ast(H_x^L)$ on $\scrX$, its value on an orbit as a point is
  equal to the value of $H_x^L$ at any point of this orbit. Note that regularity of function
  $\pi_\ast(H_x^L)$ follows from the property~(3) of~Definition~\ref{tcb 3}.

  Now consider any $z \in T_\psi^\ast(G/H)$, $y \in \pi^{-1}(z)$, $x \in \g$. Consider a velocity
  vector field $u_{\scrX, x}^L$ of the left {$G$-action} on $\scrX$ with respect to $x$. Clearly
  $u_{\scrX, x}^L(z) = d_y \pi (u_{Y, x}^L(y))$. We need to show
  $\t{\om}(\cd,  u_{\scrX, x}^L) = d_z \pi_\ast(H_x^L)$ on $T_z \scrX$, i.e.\
  $\t{\om}(v, u_{\scrX, x}^L) = (d_z \pi_\ast(H_x^L))(v)$ for any $v \in T_z \scrX$.
  It is equivalent to $\om(w, u_{Y, x}^L) = (d_y H_x^L)(w)$ for any $w \in T_y Y$. Indeed, if
  $v = (d_y \pi)(w)$, then $(d_z \pi_\ast(H_x^L))[(d_y \pi)(w)] = d_y(\pi^\ast \pi_\ast(H_x^L))(w) = (d_y H_x^L)(w)$.
  Since the left $G$-action on $T^\ast G$ is Hamiltonian, the equality
  $\om(w, u_{Y, x}^L) = (d_y H_x^L)(w)$ holds true for any $w \in T_y Y$ by the definition of
  Hamiltonian action (see \cite[Definition 1.4.1, \S1.2]{cg}).
\end{proof}

Let $Q \sb G$ be a closed connected subgroup of $G$ with Lie algebra $\q$. Clearly the left
$Q$-action on $T_\psi^\ast(G/H)$ is Hamiltonian with Hamiltonian $x \mapsto \pi_\ast(H_x^L)$.
Denote by $\mu_Q \colon T_\psi^\ast(G/H) \to \q^\ast$ the moment map. 

Note that there is a natural bijection from the set of $Q$-orbits on $G/H$ to the set of $H$-orbits on $Q \bs G$.

\begin{defi}[{cf.~\cite[\S2.2]{fgt}}] \label{tcb 10}
  An $H$-orbit $\bO$ on $Q \bs G$ is called \emph{relevant}, if $\psi |_{\Lie \Stab_H (p)} = 0$
  for any point $p \in \bO$. A $Q$-orbit in $G/H$ is called \emph{relevant}, if the
  corresponding $H$-orbit in $Q \bs G$ is relevant.
\end{defi}

\begin{rem} 
It is enough to check $\psi |_{\Lie \Stab_H (p)} = 0$ for any one point $p \in \bO$, because $\Lie \Stab_H (p)$ for all $p \in \bO$ are $H$-conjugate and $\psi$ is $H$-invariant.
\end{rem}

Set $\La_\scrX = \mu_Q^{-1}(0)$. Denote by $Y_L$ the zero level of the moment map
$T^\ast G \to \q^\ast$ for the left $Q$-action on $T^\ast G$. Explicitly, $Y_L = \{(g, \a) : \a(u_{G, x}^L(g)) = 0 \: \forall x \in \q\}$. As before, $Y = Y_R = \mu_H^{-1}(\psi)$. Since Hamiltonians
for the $Q$-action on $\scrX$ descend from the ones for $Q$-action on $T^\ast G$
by~Lemma~\ref{tcb 9}, we have $\La_\scrX = \pi(Y_L \cap Y_R)$. Note that $Y_L \cap Y_R$ is both
$G$- and $H$-invariant. Recall that there is a natural projection $\rho \colon Y \to G/H$. 

\begin{lem} \label{tcb 11}
The fiber $\rho^{-1}(g) \cap (Y_L \cap Y_R)$ is non-empty iff $[g] \in G/H$ lies in a relevant $Q$-orbit.
\end{lem}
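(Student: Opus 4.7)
The plan is to translate both conditions defining $Y_L$ and $Y_R$ into linear conditions on a single covector on $\g$, and then recognize the obstruction to their simultaneous solvability as precisely the obstruction to relevance.

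First, I would write out the two conditions at a point $g\in G$ explicitly. A covector $\alpha\in T^\ast_gG$ lies in the fiber $\rho^{-1}(g)\cap(Y_L\cap Y_R)$ iff
\[
\alpha(u_{G,x}^R(g))=\psi(x)\ \forall x\in\h,\qquad \alpha(u_{G,y}^L(g))=0\ \forall y\in\q.
\]
Using $u_{G,x}^R(g)=(d_{\se}L_g)(x)$ and $u_{G,y}^L(g)=(d_{\se}R_g)(y)$, I would pull $\alpha$ back to $\g^\ast$ via left translation, setting $\beta:=(d_{\se}L_g)^\ast(\alpha)\in\g^\ast$. The first condition becomes $\beta|_\h=\psi$, and since $(d_{\se}L_g)^{-1}\circ(d_{\se}R_g)=\Ad(g^{-1})$ on $\g$, the second condition becomes $\beta|_{\Ad(g^{-1})\q}=0$. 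Since $L_g$ is an isomorphism, the correspondence $\alpha\leftrightarrow\beta$ is a bijection between $T^\ast_gG$ and $\g^\ast$, so $\rho^{-1}(g)\cap(Y_L\cap Y_R)\ne\vn$ iff there exists $\beta\in\g^\ast$ with $\beta|_\h=\psi$ and $\beta|_{\Ad(g^{-1})\q}=0$.

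Next, I would observe that by a standard linear algebra fact (extending a functional defined on a subspace by a prescribed restriction), such a $\beta$ exists iff the two constraints are compatible on the intersection, i.e.\ iff $\psi$ vanishes on $\h\cap\Ad(g^{-1})\q$. The remaining task is to identify this intersection with the Lie algebra of the stabilizer appearing in Definition~\ref{tcb 10}. The right $H$-action on $Q\bs G$ sends $Qg$ to $Qgh$, so $\Stab_H(Qg)=\{h\in H:ghg^{-1}\in Q\}=H\cap g^{-1}Qg$, whose Lie algebra is precisely $\h\cap\Ad(g^{-1})\q$. Therefore the existence of $\beta$ (equivalently of $\alpha$) is equivalent to $\psi|_{\Lie\Stab_H(Qg)}=0$, which is exactly the relevance of the $H$-orbit of $Qg$, i.e.\ of the $Q$-orbit of $[g]\in G/H$.

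The only genuinely subtle step is getting the conventions straight: verifying that $(d_{\se}L_g)^{-1}\circ(d_{\se}R_g)=\Ad(g^{-1})$ (not $\Ad(g)$), and checking that the stabilizer of $Qg$ under the right $H$-action on $Q\bs G$ is $H\cap g^{-1}Qg$ rather than $H\cap gQg^{-1}$; once these two identifications are pinned down correctly, the rest is routine.
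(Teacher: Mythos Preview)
Your proof is correct and follows essentially the same strategy as the paper's: reduce to the solvability of two linear systems on a covector, then identify the obstruction with $\psi$ vanishing on $\Lie\Stab_H([g])$. The only cosmetic difference is that you first pull everything back to $\g^\ast$ via left translation (getting the clean conditions $\beta|_\h=\psi$ and $\beta|_{\Ad(g^{-1})\q}=0$), whereas the paper works directly in $T_g^\ast G$ with the spans $K_L,K_R$ of the velocity vectors and identifies $\Lie\Stab_H([g])$ via the differential of the orbit map $h\mapsto[gh]$; the content is identical.
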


\begin{proof}
  The fiber $\rho^{-1}(g) \cap (Y_L \cap Y_R)$ as a subset of $T^\ast_g G$ is cut out by two systems
  of linear equations: $\a(u_{G, x}^R(g)) = \psi(x)$ for any $x \in \h$, and
  $\a(u_{G, x}^L(g)) = 0$ for any $x \in \q$. Set $K_L = \Span_{x \in \q}(u_{G, x}^L(g))$, $K_R = \Span_{x \in \h}(u_{G, x}^R(g))$. Then our two systems have a common solution iff $\psi$ vanishes on all $x$ such that $u_{G, x}^R(g) \in K_L$. Consider the $H$-orbit $\bO \sb Q \bs G$, containing the image of $g$. There is a natural map $\tau \colon H \to \bO$, $h \mapsto [gh]$. Also there is a natural map $q \colon G \to Q \bs G$, so $\tau = q \circ \t{\tau}$, where $\t{\tau} \colon H \to G$, $h \mapsto gh$. Thus 
\[\Lie \Stab_H([g]) = \ker (d_\se \tau) = \{x \in \h : d_\se \t{\tau}(x) \in \ker d_g q\} = \{x \in \h : u_{G, x}^R(g) \in K_L\}\]
So the fiber over $g$ is non-empty iff $\psi$ vanishes on $\Lie \Stab_H([g])$, that
is equivalent to the relevancy of $\bO$ by~Definition~\ref{tcb 10}.
\end{proof}

The following corollary is the key result of this subsection.

\begin{cor} \label{tcb 12} 
  The subvariety $\La_\scrX$ is a union of twisted conormal bundles over the relevant $Q$-orbits in
  $G/H$. In particular, if there are only finitely many relevant $Q$-orbits in $G/H$,
  then $\La_\scrX$ is of pure dimension $\dim(G/H)$.
\end{cor}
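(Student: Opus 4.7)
The plan is to deduce the corollary from Lemma~\ref{tcb 11} by examining the fibers of the natural projection $\o\rho \colon \scrX \to G/H$, well-defined because $\rho \colon Y \to G/H$ is $H$-invariant.

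Since $\La_\scrX = \pi(Y_L \cap Y_R)$ and $\o\rho \circ \pi = \rho$, Lemma~\ref{tcb 11} immediately implies that $\o\rho(\La_\scrX)$ equals the union of the relevant $Q$-orbits in $G/H$, and accordingly $\La_\scrX$ decomposes as the disjoint union of the pieces $\La_\scrX^\bO := \La_\scrX \cap \o\rho^{-1}(\bO)$, one for each relevant $Q$-orbit $\bO$. For such a $\bO$ and a point $[g] \in \bO$, I would identify the fiber of $\La_\scrX^\bO$ over $[g]$. Lifted to $Y$, this fiber is the affine subspace of $T^\ast_g G$ cut out by $\a(u_{G, x}^R(g)) = \psi(x)$ for $x \in \h$ and $\a(u_{G, x}^L(g)) = 0$ for $x \in \q$. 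Relevancy ensures nonemptiness (as in the proof of Lemma~\ref{tcb 11}), and the underlying linear subspace is the annihilator of $K_L + K_R$ in the notation used there. Passing to the $H$-quotient, the fiber of $\La_\scrX^\bO$ over $[g]$ becomes an affine space whose associated linear space is the conormal $(T_{[g]}\bO)^\perp \sb T^\ast_{[g]}(G/H)$. By $Q$-equivariance of the whole construction, these fibers assemble into an algebraic affine bundle over $\bO$ that is a torsor for the ordinary conormal bundle of $\bO$ in $G/H$; this is what we mean by the $\psi$-twisted conormal bundle of $\bO$.

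For the ``in particular'' statement, each twisted conormal bundle over $\bO$ has total dimension $\dim \bO + \codim_{G/H} \bO = \dim(G/H)$. If there are only finitely many relevant orbits, $\La_\scrX$ is a finite union of such pieces, hence of pure dimension $\dim(G/H)$. The most delicate point I expect is verifying that the fiberwise affine-subspace description on each $\La_\scrX^\bO$ upgrades to an algebraic affine bundle structure over $\bO$; this should follow routinely from $Q$-equivariance together with the local triviality of $Y \to G$ already used in~\S\ref{tcb constr}.
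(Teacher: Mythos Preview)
Your proposal is correct and follows essentially the same route as the paper: both deduce the decomposition over relevant $Q$-orbits directly from Lemma~\ref{tcb 11} and $\La_\scrX = \pi(Y_L \cap Y_R)$, then identify the linearization of each fiber as the conormal space by observing that the homogeneous equations $\a(u_{G,x}^L(g)) = 0$ cut out exactly the annihilator of $T_{[g]}\bO$. Your write-up is slightly more explicit about the affine-bundle assembly via $Q$-equivariance, whereas the paper simply asserts it; otherwise the arguments are the same.
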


The word "twisted" in~Corollary~\ref{tcb 12} means that we deal with some affine bundle over
a relevant orbit such that the linearization of this bundle is the conormal bundle over corresponding orbit. 

\begin{proof}
  It follows immediately from Lemma~\ref{tcb 11} and $\La_\scrX = \pi(Y_L \cap Y_R)$ that $\La_\scrX$ is a union of some affine bundles over relevant $Q$-orbits. Consider any relevant $Q$-orbit $\bO \sb G/H$. To see that after linearization we get a conormal bundle over $\bO$ (as a subbundle in $T^\ast(G/H)$), notice that this linearization lies in $T^\ast(G/H)$ (cf. Example \ref{tcb 7}) and is given by linear equations $\a(u_{G, x}^L(g)) = 0$ for any $x \in \q$, where $[g] \in \bO$. Since the images of $u_{G, x}^L(g)$ under the natural projection $G \to G/H$ span the tangent space to $\bO$ at $[g]$,
  we get exactly the conormal bundle to $\bO$.
\end{proof}

\begin{rem} \label{rem Los-rel-orb}
	The analogous statement for the non-twisted bundles is well-known (cf.~\cite[Exercises 10.6]{l3}). Corollary~\ref{tcb 12} formally implies Exercise 10.6 only for cotangent bundles of homogeneous spaces, and the general case mentioned in Exercise 10.6 follows immediately from explicit formula for Hamiltonians (see Fact~\ref{tcb 1}).
\end{rem}

\begin{cor} \label{tcb 13}
If $Q$ is solvable and there are only finitely many relevant $Q$-orbits in $G/H$, then $\La_\scrX$ is Lagrangian and the number of its irreducible components coincides with the number of relevant orbits.
\end{cor}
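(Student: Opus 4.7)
The plan is to use Corollary~\ref{tcb 12} as the structural input and bootstrap from there. That corollary already gives a decomposition $\La_\scrX = \bigcup_\bO L_\bO$ into finitely many twisted conormal bundles, one for each relevant $Q$-orbit $\bO \sb G/H$, and shows that $\La_\scrX$ has pure dimension $\dim(G/H) = \f{1}{2}\dim\scrX$. Since any Lagrangian subvariety must have precisely this dimension, the corollary reduces to checking (i) that each piece $L_\bO$ is isotropic with respect to the descended symplectic form $\t{\om}$ from Lemma~\ref{tcb 8}, and (ii) that distinct relevant orbits contribute distinct irreducible components.

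For (i), I would lift to $Y_L \cap Y_R \sb T^\ast G$, whose image under $\pi$ is $\La_\scrX$. At a point $y = (g, \a)$ over $[g] \in \bO$, the tangent space to the lift of $L_\bO$ is the simultaneous kernel of $d_y H_x^L$ for $x \in \q$ and of $d_y H_x^R$ for $x \in \h$ (using that $H_x^R \equiv \psi(x)$ on $Y_R$, so $d H_x^R$ vanishes tangentially to $Y_R$). Because the left $Q$- and right $H$-actions on $T^\ast G$ commute, their Hamiltonians Poisson-commute, so the $\om$-orthogonal to this kernel is spanned by the velocity vectors $u_{T^\ast G, x}^L$ ($x \in \q$) and $u_{T^\ast G, x}^R$ ($x \in \h$). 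Modding out by the right $H$-directions and invoking the dimension count, this yields $\t{\om}|_{L_\bO} = 0$.

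For (ii), I would note that solvability of $Q$ implies connectedness, so each orbit $\bO = Q / \Stab_Q(p)$ is irreducible; then $L_\bO$, an affine bundle over an irreducible base with irreducible affine fibers, is irreducible. Distinct relevant orbits project to distinct subvarieties of $G/H$, and all pieces share the common dimension $\dim(G/H)$, so no $L_\bO$ can lie in the closure of another $L_{\bO'}$. Hence the irreducible components of $\La_\scrX$ are exactly the closures $\o{L_\bO}$, one for each relevant orbit. The main technical care lies in step (i): Corollary~\ref{tcb 12} identifies only the linearization of $L_\bO$ with the conormal bundle to $\bO$, so the argument must explicitly rule out that the affine shift inherent in the "twist" spoils Lagrangianity, and this is where the commutation of the two actions and the $H$-invariance of $\psi$ do the essential work.
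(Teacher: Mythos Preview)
The paper's proof is a two-liner taking a different route: it cites \cite[Theorem~1.5.7]{cg} (the zero level of the moment map for a solvable group action is coisotropic --- this is where solvability enters) and combines this with the pure half-dimension from Corollary~\ref{tcb 12}. Your part (ii) just makes the component count explicit and is fine, though note that solvability does not imply connectedness; $Q$ is already assumed connected in \S\ref{tcb la}.

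Your part (i) has a genuine gap. First, the identification of $T_y(\text{lift of }L_\bO)$ with $\bigcap_{x\in\q}\ker d_yH_x^L\cap\bigcap_{x\in\h}\ker d_yH_x^R$ is incorrect: that kernel is, at a regular point, the tangent space to \emph{all of} $Y_L\cap Y_R$ (dimension $2\dim G-\dim\q-\dim\h$), whereas the lift of a single $L_\bO$ sits only over the double coset $QgH\subset G$ and has dimension $\dim G$. Second, the inference ``$K^\perp$ is spanned by velocity vectors, hence after modding out and counting dimensions $\t\om|_{L_\bO}=0$'' is unjustified: knowing $K^\perp$ says nothing about whether $K$, or any subspace of it, is isotropic. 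What your Poisson-commutation remark \emph{does} yield --- once you add $\{H_x^L,H_{x'}^L\}|_{Y_L}=0$ and $\{H_x^R,H_{x'}^R\}|_{Y_R}=\psi([x,x'])=0$ --- is that the velocity span lies in $K$, i.e.\ $K^\perp\subseteq K$: coisotropy, not isotropy. So the repaired version of your argument is precisely the paper's.
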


\begin{proof}
Since $\La_\scrX$ is coisotropic in this case (see \cite[Theorem 1.5.7]{cg}), the corollary immediately follows from Corollary \ref{tcb 12}.
\end{proof}

%--------------------------------------------------------------------------------

\subsubsection{Two-sided action on $T_\psi^\ast(G/H)$} \label{tcb 2sided}

Suppose that there is a closed connected subgroup $S \sb G$ with Lie algebra $\fs$
such that $S \sb N_G(H)$, i.e.~$php^{-1} \in H$ for any $h \in H$, $p \in S$. Suppose that $\psi$
vanishes on $[\h, \fs]$. Then $\psi$ vanishes on $\Ad_p(x) - x$ for any $x \in \h, p \in S$. Indeed, consider an exponential map $t \mapsto \exp(t y)$ (see \cite[\S 3.1]{k}) with initial velocity $y \in \fs$. We have the $\bG_a$-orbit $\{\Ad_{\exp(ty)(x)} : t \in \C\}$ in $\h$ through $x$, with the natural $\bG_a$-action $s \cd \Ad_{\exp(ty)(x)} = \Ad_{\exp((t+s)y)(x)}$. Since $[x, y] = 0$, the differential of $\bG_a$-action map $s \mapsto s \cd x = \Ad_{\exp(sy)}(x)$ vanishes, and our $\bG_a$-orbit is a single point $\{x\}$. So for any $y \in \fs$, $t \in \C$ holds $\Ad_{\exp(ty)}(x) = x$. Since $\exp(t y)$ for all possible $y \in \h, t \in \C$ generate $S$, we have $\Ad_p(x) = x$ for any $x \in \h, p \in S$.

Also suppose that $H \cap S = \{\se\}$, so $\h \cap \fs = 0$.

\begin{lem} \label{tcb 14}
	The right $S$-action on $T^\ast G$ preserves $Y = \mu_H^{-1}(\psi)$.
\end{lem}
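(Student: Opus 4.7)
The plan is direct verification: given $(g, \a) \in Y$ and $p \in S$, I check that the image $(g, \a) \cd p = (gp, \t{\a})$ under the natural right $S$-action on $T^\ast G$ still satisfies the defining relation $\t{\a}(u_{G, x}^R(gp)) = \psi(x)$ for every $x \in \h$. Here the action pulls back the covector, $\t{\a} = (d_{gp} R_{p^{-1}})^\ast \a$, where $R_{p^{-1}} \colon G \to G$ is right translation by $p^{-1}$, so the proof reduces to a tangent-vector computation paired with an adjoint-invariance statement for $\psi$.

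The key step is to trace how the right-invariant velocity fields transform under $R_{p^{-1}}$. The conjugation identity $gp \exp(tx) = g \exp(t \Ad_p(x)) p$, differentiated at $t = 0$, gives
\[d_{gp} R_{p^{-1}}(u_{G, x}^R(gp)) = u_{G, \Ad_p(x)}^R(g).\]
Since $p \in S \sb N_G(H)$, we have $\Ad_p(x) \in \h$, and therefore
\[\t{\a}(u_{G, x}^R(gp)) = \a(u_{G, \Ad_p(x)}^R(g)) = \psi(\Ad_p(x)),\]
where the last equality uses $(g, \a) \in Y$.

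What remains, and this is the main obstacle, is to show $\psi(\Ad_p(x)) = \psi(x)$ for all $p \in S$, $x \in \h$, using only the infinitesimal hypothesis that $\psi$ vanishes on $[\h, \fs]$ (as already observed in the paragraph preceding the lemma). For $y \in \fs$ and $t \in \C$, since $\fs$ normalizes $\h$, each iterated bracket $\ad_y^n(x)$ with $n \br 1$ lies in $\h$, and hence in $[\h, \fs]$. Thus every positive-degree term of the Taylor expansion
\[\psi(\Ad_{\exp(ty)}(x)) = \psi(x) + \sum \l_{n \br 1} \f{t^n}{n!} \psi(\ad_y^n(x))\]
vanishes. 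By connectedness, $S$ is generated by the one-parameter subgroups $\{\exp(ty) : t \in \C,\ y \in \fs\}$, so $\psi \circ \Ad_p = \psi$ on $\h$ throughout $S$. Combining the three steps yields $\t{\a}(u_{G, x}^R(gp)) = \psi(x)$, which is exactly the condition for $(g, \a) \cd p \in Y$.
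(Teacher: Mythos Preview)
Your proof is correct and follows essentially the same route as the paper's. The only cosmetic difference is that the paper first invokes the left $G$-invariance of $Y$ (Lemma~\ref{tcb 2}) to reduce to the fiber over $\se$, whereas you compute directly at a general $g$; both arguments then hinge on the identity $\psi(\Ad_p(x)) = \psi(x)$, which the paper records in the paragraph preceding the lemma and which you re-derive via the Taylor expansion of $\Ad_{\exp(ty)}$.
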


\begin{proof}
  Since the left $G$-action and the right $S$-action commute, and the left $G$-action
  preserves $Y$ by Lemma \ref{tcb 2}, it is enough to consider only the fiber of $Y$ over the neutral element $\se$.
  Let $(\se, \a) \in Y$, where $\a \in T_\se^\ast G$. It is equivalent to $\a(x) = \psi(x)$ for
  any $x \in \h$. Next we apply the right translation by $p$ to $(\se, \a)$ and get another point
  $(p, \b)$. We need to check that $\b(u_{G, x}^R(p)) = \psi(x)$ for any $x$. Since the right and
  left $G$-actions commute, it is equivalent to the equality $\Ad_p^\ast(\a)(x) = \psi(x)$ for
  any $x$ and $p \in S$. So it is enough to show that $\a(\Ad_p(x) - x) = 0$ for any $x \in \h$.
  Indeed, since $p H p^{-1} \sb H$, we have $\Ad_p(x) - x \in \h$, and $\psi$ vanishes on
  $\Ad_p(x) - x$ by our assumptions.
\end{proof}

Since $S \sb N_G(H)$, the right $S$-action on $Y$ induces the right $S$-action on $\scrX$.

\begin{lem}[cf. Lemma \ref{tcb 9}] \label{tcb 15}
	For any $x \in \fs$, the function $H_x^R |_Y$ is $S$-invariant and the right $S$-action on $\scrX$ is Hamiltonian with Hamiltonian $x \mapsto \pi_\ast(H_x^R|_Y)$. 
\end{lem}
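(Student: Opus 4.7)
The proof I would write closely mirrors the proof of Lemma \ref{tcb 9}, with the twist that the right $H$- and right $S$-actions on $T^\ast G$ do \emph{not} commute. Accordingly, the $H$-invariance of $H_x^R|_Y$, needed to make $\pi_\ast(H_x^R|_Y)$ well-defined on $\scrX$, no longer follows for free from commuting actions; instead it must be extracted from the two preamble hypotheses $S \sb N_G(H)$ (giving $[\h, \fs] \sb \h$) and $\psi|_{[\h, \fs]} = 0$. Once that is in hand, the Hamiltonian claim is formal, by the same tangent-space calculation as in Lemma \ref{tcb 9}.

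For the invariance step I would fix $y = (g, \alpha) \in Y$ and $h_0 \in H$. A direct computation using the identity $(d_g R_{h_0})(u_{G, x}^R(g)) = u_{G, \Ad_{h_0^{-1}}(x)}^R(gh_0)$ yields
\[
H_x^R(y \cd h_0) - H_x^R(y) = \alpha\bigl(u_{G, \Ad_{h_0}(x) - x}^R(g)\bigr).
\]
Since $[\h, \fs] \sb \h$, iterating shows $\ad_z^n(x) \in \h$ for every $z \in \h$ and $n \br 1$; hence $\Ad_{h_0}(x) - x \in \h$, and by definition of $Y$ the right-hand side equals $\psi(\Ad_{h_0}(x) - x)$. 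Expanding $\Ad_{\exp z}(x) = x + [z, x] + \tfrac{1}{2}[z, [z, x]] + \ldots$, the first summand lies in $[\h, \fs]$ (where $\psi$ vanishes by hypothesis) and all subsequent summands lie in $[\h, \h]$ (where $\psi$ vanishes by the $H$-invariance of $\psi$). Thus $\psi(\Ad_{h_0}(x) - x) = 0$, giving the $H$-invariance needed for the descent. The $S$-invariance would be handled by the same argument with $s_0 \in S$ in place of $h_0$.

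For the Hamiltonian claim, I would repeat the tangent-space argument from the second paragraph of the proof of Lemma \ref{tcb 9}: fixing $z = \pi(y) \in \scrX$ and $v = (d_y \pi)(w) \in T_z \scrX$, the desired equality $\t{\om}_z(v, u_{\scrX, x}^R(z)) = (d_z \pi_\ast(H_x^R|_Y))(v)$ pulls back through $\pi$ to $\om_y(w, u_{Y, x}^R(y)) = (d_y H_x^R)(w)$ on $T_y Y$, which holds on all of $T^\ast G$ by Fact \ref{tcb 1} applied to the right $S$-action. The main obstacle is really the $H$-invariance: because the $H$- and $S$-actions no longer commute, both preamble hypotheses on $\psi$ and on $S$ are used in tandem, entering at different orders of the Taylor expansion of $\Ad_{h_0}(x) - x$.
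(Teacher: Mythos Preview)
Your argument is correct and is precisely the fleshed-out version of what the paper's terse proof gestures at: the paper merely says ``similar to Lemma~\ref{tcb 9}, using the additional condition $\psi|_{[\fs,\h]}=0$'', and your computation $H_x^R(y\cdot h_0)-H_x^R(y)=\alpha\bigl(u^R_{G,\Ad_{h_0}(x)-x}(g)\bigr)=\psi(\Ad_{h_0}(x)-x)=0$ is exactly how that condition enters. The second paragraph (the Hamiltonian claim) is verbatim the argument of Lemma~\ref{tcb 9}.

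One caveat: your throwaway line ``The $S$-invariance would be handled by the same argument with $s_0\in S$ in place of $h_0$'' does not actually go through. For $s_0\in S$ and $x\in\fs$ one has $\Ad_{s_0}(x)-x\in\fs$, not in $\h$, so the defining equation of $Y$ gives no control over $\alpha\bigl(u^R_{G,\Ad_{s_0}(x)-x}(g)\bigr)$; indeed $H_x^R$ need not be $S$-invariant unless $x$ is central in $\fs$. This is almost certainly a typo in the statement (it should read ``$H$-invariant'', parallel to Lemma~\ref{tcb 9}, since $H$-invariance is what is required for $\pi_\ast$ to be defined), and your proof of $H$-invariance is what is actually needed and used downstream.
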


\begin{proof} 
  The proof is completely similar to the proof of Lemma \ref{tcb 9}. The only difference is
  that we restrict our functions to $Y$ and use the additional condition
  $\psi |_{[\fs, \h]} = 0$ as in Lemma \ref{tcb 14}.
\end{proof}

Clearly the left $Q$- and right $S$- actions on $\scrX$ commute, because the left and right
translations on $G$ commute. Hence there is a natural $Q \times S$-action on $\scrX$. Put
$R = Q \times S$. Since both $P$- and $Q$- actions are Hamiltonian, the natural $R$-action
is Hamiltonian with Hamiltonian $(x_l, x_r) \mapsto \pi_\ast(H_{x_l}^L |_Y + H_{x_r}^R |_Y)$,
where $x_l \in \q$, $x_r \in \fs$ and $\Lie R = \q \oplus \fs$. Set $\t{H} = S \ltimes H \sb G$,
it is a closed connected subgroup. The next proposition is the key point in the proof
of Theorem~\ref{theo}.

\begin{prop} \label{tcb 16}
  Suppose that the subgroups $S$, $Q$ are solvable, $\psi$ extends to the $\t{H}$-invariant covector in $(\Lie \t{H})^*$, and $Q$ has only finitely many relevant orbits in $G/\t{H}$. Then
	\begin{itemize}
		\item[(i)] the zero level $\La_1$ of the moment map $T_\psi^\ast(G/H) \to (\Lie R)^\ast$.
		\item[(ii)] the zero level $\La_2$ of the moment map $T_\psi^\ast(G/\t{H}) \to (\Lie Q)^\ast$.
	\end{itemize}
	are both Lagrangian subvarieties. The number of irreducible components of $\La_1$ and the number
        of irreducible components of $\La_2$ are both equal to the number of relevant $Q$-orbits
        in $G/\t{H}$.
\end{prop}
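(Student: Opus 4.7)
The plan is to first dispatch part~(ii) by a direct appeal to Corollary~\ref{tcb 13}, and then derive part~(i) from part~(ii) via Hamiltonian reduction by~$S$.

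For~(ii), fix the extension $\t{\psi} \in (\Lie \t{H})^\ast$ by $\t{\psi}|_\h = \psi$ and $\t{\psi}|_\fs = 0$. Since $\h\cap\fs=0$ and $\Lie\t{H}=\h\oplus\fs$, this is well-defined. Its $\t{H}$-invariance reduces to three checks: $\psi$ is $H$-invariant by assumption; $\psi$ is $\Ad(S)$-invariant on $\h$, which is exactly the conclusion of the calculation preceding Lemma~\ref{tcb 14}; and $\fs$ is $\Ad(S)$-stable while $\t{\psi}|_\fs=0$. Since $\t{H}=S\ltimes H$ is closed and connected, $Q$ is solvable, and by hypothesis there are only finitely many relevant $Q$-orbits in $G/\t{H}$, Corollary~\ref{tcb 13} applies directly and yields~(ii).

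For~(i), the key step is to identify the $S$-Hamiltonian reduction of $T_\psi^\ast(G/H)$ with $T_{\t{\psi}}^\ast(G/\t{H})$. By Lemma~\ref{tcb 15}, the right $S$-action on $T_\psi^\ast(G/H)$ is Hamiltonian, and its zero-level pulls back to $\{(g,\a)\in Y : \a(u_{G,x}^R(g))=0\ \forall x\in\fs\}$, which together with the defining equations of $Y=\mu_H^{-1}(\psi)$ is exactly $\mu_{\t{H}}^{-1}(\t{\psi})$. Further quotienting by the free right $S$-action (free because $H\cap S=\{\se\}$) identifies $\mu_S^{-1}(0)/S$ with $T_{\t{\psi}}^\ast(G/\t{H})$. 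Since the Hamiltonians $H_x^L$ for the left $Q$-action on $T^*G$ are both $H$- and $S$-invariant (they commute with both right actions), the $Q$-moment maps on the two sides are compatible, so $\La_1=\mu_Q^{-1}(0)\cap\mu_S^{-1}(0)$ is exactly the preimage of $\La_2\subset T_{\t{\psi}}^\ast(G/\t{H})$. Thus $\La_1\to\La_2$ is a principal $S$-bundle.

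To conclude~(i), a dimension count gives
\[
\dim\La_1=\dim S+\dim\La_2=\dim S+\dim(G/\t{H})=\dim(G/H)=\tfrac{1}{2}\dim T_\psi^\ast(G/H).
\]
Since $R=Q\times S$ is solvable and acts Hamiltonianly on $T_\psi^\ast(G/H)$, its moment map zero-level $\La_1$ is coisotropic by \cite[Theorem 1.5.7]{cg}; coisotropic and half-dimensional forces Lagrangian. Finally, $S$ being connected and smooth makes $\La_1\to\La_2$ have irreducible fibers, so the number of irreducible components of $\La_1$ equals that of $\La_2$, and by~(ii) this equals the number of relevant $Q$-orbits in $G/\t{H}$.

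The step I expect to require the most care is the identification $\mu_S^{-1}(0)/S\cong T_{\t{\psi}}^\ast(G/\t{H})$ at the level of varieties (not just sets) with compatible symplectic structures and $Q$-moment maps. In particular, one must verify that $H_x^L|_Y$ descends both through the $H$-quotient to a function on $T_\psi^\ast(G/H)$ and through the $\t{H}$-quotient to a function on $T_{\t{\psi}}^\ast(G/\t{H})$, and that these two descents agree under the $S$-quotient map $\mu_S^{-1}(0)\to T_{\t{\psi}}^\ast(G/\t{H})$; this is where the compatibility relying on the extension being zero on $\fs$ is essential.
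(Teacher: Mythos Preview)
Your proposal is correct and follows essentially the same route as the paper: dispatch~(ii) via Corollary~\ref{tcb 13}, then identify $\mu_S^{-1}(0)/S$ with $T_{\t{\psi}}^\ast(G/\t{H})$ so that $\La_1$ maps onto $\La_2$ with fibers~$S$, and conclude by a dimension count plus coisotropy. The paper phrases the identification as a geometric quotient (invoking Remark~\ref{tcb 4}) rather than a principal bundle, and spends a short paragraph proving that $\phi^{-1}(C)$ is irreducible for each component $C\subset\La_2$ using that images of $S$-saturated closed sets are closed; your final sentence (``connected fibers $\Rightarrow$ same number of components'') is exactly the place where that extra care is needed, and you correctly flagged the reduction-in-stages identification as the delicate point.
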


\begin{proof}
  Both varieties are coisotropic by \cite[Theorem 1.5.7]{cg}. By Corollary \ref{tcb 13}, the variety
  $\La_2$ is Lagrangian and has the desired number of irreducible components. Hence it remains to prove that $\dim \La_1 = \dim \La_2 + \dim S$ and that $\La_1$, $\La_2$ have the same number of irreducible components. 
	
  First consider only the right $S$-action on $T_\psi^\ast(G/H)$ and denote its moment map
  by $\mu_S$. Clearly $\mu_S^{-1}(0)$ is the image of some irreducible affine
  bundle over $G$ under the projection $\pi \colon \mu_H^{-1}(\psi) \to T_\psi^\ast(G/H)$ discussed in detail in \S\ref{tcb constr}. So $\mu_S^{-1}(0)$ is irreducible. There is a natural bijective
  morphism $\mu_S^{-1}(0) \to T_\psi^\ast(G/\t{H})$. Its fibers are exactly $S$-orbits, so
  by Remark \ref{tcb 4} $T_\psi^\ast(G/\t{H})$ is a geometric quotient of $\mu_S^{-1}(0)$. 
	
  Under the projection $\mu_S^{-1}(0) \to T_\psi^\ast(G/\t{H})$, the variety
  $\mu_Q^{-1}(0) \cap \mu_S^{-1}(0)$ maps to $\La_2$ surjectively and each fiber is isomorphic to
  $S$. Hence we get a morphism $\phi \colon \La_1 \to \La_2$ with fibers isomorphic to $S$.
  This morphism has one important additional property. Since it comes from the geometric quotient projection, the image of any closed subset in $\La_1$, containing with each its point the whole fiber (isomorphic to $S$) through this point, is a closed subset in $\La_2$. Clearly $\dim \La_1 = \dim \La_2 + \dim S$, so $\La_1$ is Lagrangian. 
	
	Let us show that the preimage of any irreducible component $C$ of $\La_2$ is an irreducible component of $\La_1$. Indeed, suppose $\phi^{-1}(C) = C_1' \cup C_2'$, where $C_i'$ are closed subsets of $\La_1$. Since $S$ is irreducible, for any $x \in \La_2$ the fiber over $x$ is contained either in $C_1'$ or in $C_2'$. Denote by $C_i$ the set of $x \in C_i'$ such that the fiber over $\phi(x)$ is contained in $C_i'$. Then $C_i$ is closed (by semi-continuity of dimension of the fiber) and with each its point contains the whole fiber through this point. So $\phi(C_i)$ are closed in $\La_2$ and $C = \phi(C_1) \cup \phi(C_2)$, so $C = \phi(C_i)$ for some $i \in \{1, 2\}$ and hence $C_i' = \phi^{-1}(C)$. This observation implies that $\La_1$ and $\La_2$ have the same number of irreducible components.
\end{proof}

%--------------------------------------------

\subsection{Relation between $T_\psi^\ast$ and $\cS_e$, and the end of the proof} \label{id}

In this section we establish an equivariant isomorphism of symplectic varieties
$G \times \cS_e\iso T_\psi^\ast(G/H)$ defined in \S\ref{tcb}. So by Proposition \ref{tcb 16} we reduce the proof to
counting relevant orbits. Finally, we perform this count and complete the proof of
Theorem~\ref{theo}.

In more detail, in \S\ref{id ginz} we use a result of Gan and Ginzburg~\cite[Lemma~2.1]{gg} and
establish an isomorphism of algebraic varieties $G \times \cS_e\iso T_\psi^\a(G/H)$. Also we
relate the $G \times S$-action (where $S \sb G$ is a connected closed subgroup with some
additional properties) on $G \times \cS_e$ and the two-sided $G \times S$-action on
$T_\psi^\ast(G/H)$. In \S\ref{id losev} we show that the symplectic structure on $G \times \cS_e$ constructed in \cite{l2} coincides with the symplectic structure coming from the twisted cotangent bundle. Also we give an alternative proof (see Lemma \ref{id_los 2}) of the fact that $G \times \SL(2)$-action defined in \S\ref{intr form} is Hamiltonian. Finally, in \S\ref{id ro} we count relevant orbits and complete the proof of~Theorem~\ref{theo}.

%--------------------------------------------

\subsubsection{Identification with the twisted cotangent bundle} \label{id ginz}

Let $G$ be a connected semisimple linear algebraic group over $\C$ with Lie algebra $\g$.
Let $e \in \g$ be a non-zero nilpotent element and let $(e, f, h)$ be an $\sl_2$-triple
in $\g$ (it exists by the Jacobson-Morozov Theorem~\cite[Theorem~3.7.1]{cg}). Set
$\cS_e = e + \ker \ad_f$, then $\cS_e$ is the \emph{Slodowy slice} (cf.~\cite[\S 3.7.14]{cg}).
Set $\scrX = G \times \cS_e$.

This section is mostly based on \cite[\S2]{gg}. We define $\chi \in \g^\ast$, $\chi(x) = (x, e)$,
where $(\cd, \cd)$ is the Killing form on $\g$. Recall that the Killing form is non-degenerate
and provides a non-degenerate pairing $\g_\al \ot \g_{-\al} \to \C$, where $\al \in R$ ($R$ is the
root system of $G$) or $\al = 0$. Moreover, $\g_\al \bot \g_\be$ for $\al + \be \ne 0$. 

Consider $\g$ as an $\sl_2$-module, so under the action of $\ad_h$ we have a weight decomposition
$\g = \bigoplus_{i \in \Z} \g_i$, where $\g_i = \{x \in \g : [h, x] = ix\}$. Note  on $\g_{-1}$ is
equipped with a symplectic form $\om$: $\om(x, y) = \chi([x, y]) = ([x, y], e)$. Consider an
integer $d < \f{1}{2} \dim \g_{-1}$ and $l \in \IGr(d, \g_{-1})$, i.e.~$l$ is an isotropic subspace of dimension $d$. Set
\[\g_{\mr k} = \bigoplus_{i \mr k} \g_i, \quad \n_l = l \oplus \g_{\mr -2}, \quad \m_l = l^{\bot_\om} \oplus \g_{\mr -2}\]
Since $l \sb l^{\bot_\om}$, we have $\n_l \sb \m_l$. 

\begin{lem} \label{constr Nl}
  One can choose $l \in \IGr(d, \g_{-1})$ such that there is a closed connected subgroup
  $N_l \sb G$ with Lie algebra $\n_l$.
\end{lem}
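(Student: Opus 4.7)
The plan is to show that for any isotropic subspace $l \in \IGr(d,\g_{-1})$ (such $l$ exist since $d < \tfrac{1}{2}\dim\g_{-1}$, so the lemma amounts to an existence statement about $N_l$), the associated subspace $\n_l = l \oplus \g_{\mr -2}$ is automatically a nilpotent Lie subalgebra of $\g$ consisting of $\ad$-nilpotent elements, and then to appeal to the appendix to integrate it.

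First, I would verify that $\n_l$ is closed under the Lie bracket. Using the $\Z$-grading induced by $\ad_h$ together with the relation $[\g_i,\g_j]\sb \g_{i+j}$, we have $[l,l]\sb \g_{-2}$, $[l,\g_{\mr -2}]\sb \g_{\mr -3}$, and $[\g_{\mr -2},\g_{\mr -2}]\sb \g_{\mr -4}$, all of which lie inside $\g_{\mr -2}\sb \n_l$. Isotropy of $l$ for $\om$ is not even needed for this step; it will matter only for the later symplectic reduction. Next, I would observe that every element of $\n_l$ is $\ad$-nilpotent: since $\n_l \sb \g_{\mr -1}$, the operator $\ad_x$ for $x\in\n_l$ strictly decreases the $\ad_h$-weight, and because $\g$ is finite-dimensional its weight spectrum is bounded below, so $\ad_x$ is nilpotent. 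In particular, $\n_l$ is a nilpotent Lie subalgebra of $\g$ whose elements are all $\ad$-nilpotent.

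Finally, I would invoke the general existence result discussed in the appendix \S\ref{ap sub}: in a complex semisimple (more generally, reductive) linear algebraic group $G$, any Lie subalgebra of $\g$ consisting of $\ad$-nilpotent elements is the Lie algebra of a unique closed connected unipotent subgroup of $G$, obtained via the (polynomial) exponential map. Applying this to $\n_l$ yields the desired $N_l$. The only genuinely nontrivial input is the appendix's integration statement; the main body of the argument reduces to the bracket checks in the grading and the weight-based observation that $\ad_x$ is nilpotent on $\g$.
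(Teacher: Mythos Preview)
Your argument that $\n_l$ is a nilpotent Lie subalgebra consisting of $\ad$-nilpotent elements is correct, but there is a genuine gap in how you invoke the appendix. Proposition~\ref{lie exist} does \emph{not} say that an arbitrary Lie subalgebra of $\ad$-nilpotent elements integrates to a closed subgroup; it requires the subalgebra to be \emph{normalized by a maximal torus} $T$ (and to meet $\Lie T$ trivially). For a generic isotropic $l\sb\g_{-1}$ there is no reason for $\n_l$ to be $T$-stable, so the hypothesis of Proposition~\ref{lie exist} is not met and your appeal to the appendix fails as written.

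The paper's proof is designed precisely to meet this hypothesis: one fixes a maximal torus $T$ with $h\in\Lie T$, notes that $T$ then preserves each $\g_i$ and hence acts on $\IGr(d,\g_{-1})$, and uses the Borel fixed point theorem to pick $l$ as a $T$-fixed point. With this choice $\n_l$ is $T$-stable and Proposition~\ref{lie exist} applies. This explains the wording ``one can choose $l$'' in the statement. Moreover, the $T$-stability of $l$ is not an incidental feature: the paper explicitly uses it afterwards (see the sentence following Lemma~\ref{constr Nl} and the proof of Lemma~\ref{contr}, which relies on the root-subgroup description of $N_l$ coming from Proposition~\ref{lie exist}).

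The more general integration statement you have in mind---that in characteristic zero every Lie subalgebra of $\g$ consisting of nilpotent elements exponentiates to a closed connected unipotent subgroup---is true, but it is not what \S\ref{ap sub} proves, and invoking it would require an independent reference or argument. Even granting it, you would only obtain \emph{some} $N_l$ for arbitrary $l$, whereas the paper needs the extra structure furnished by the $T$-fixed choice.
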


\begin{proof}
  Clearly $\n_l$ is nilpotent. By~Proposition~\ref{lie exist} it is enough to find a
  maximal torus $T \sb G$ such that~$\n_l$ is invariant under the $T$-action.
  By \cite[Lemma 3.1.4~(a)]{cg} $h$ lies in a Cartan subalgebra $\ft$. Consider the maximal torus
  $T$ corresponding to this subalgebra. Since $[x, h] = 0$ for any $x \in \ft$, we see that $h$
  is $T$-invariant, i.e.~$\Ad_t(h) = h$ for any $t \in T$. Since $\Ad_t$ commutes with the
  Lie bracket and $\Ad_t(h) = h$, we see that $\Ad_t(\g_i) = \g_i$ for any $i$. The only question
  is if $\Ad_t(l) = l$ for any $t \in T$.
	
  Clearly there is a $T$-action on a projective variety $\IGr(d, \g_{-1})$. Since $T$ is
  solvable, by the Borel fixed point theorem~\cite[Theorem 6.2.6]{s} there is a fixed point $l$.
  Then $\n_l$ is $T$-invariant and the existence of the desired subgroup $N_l$ follows.
\end{proof}

Choose $l \in \IGr(d, \g_{-1})$ from the proof of Lemma \ref{constr Nl} (we will need a maximal torus
preserving $\n_l$ afterwards). If $\g_{-1} = 0$, then there is no ambiguity in the choice of $l$
and $\n_l = \m_l = \g_{\mr -2}$. Clearly $N_l$ acts by conjugation on $\g$, and we denote the
corresponding map $\ba \colon N_l \times \g \to \g$. Denote by $\m_l^\bot$ the orthogonal
complement to $\m_l$ in $\g$ with respect to the Killing form.

\begin{lem} \label{incl}
	$\ba(N_l \times \cS_e) \sb e + \m_l^\bot$.
\end{lem}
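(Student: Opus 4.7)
My plan is to reduce the statement to three sub-facts about the Killing-orthogonal $\m_l^\bot$: that $\ker \ad_f \sb \m_l^\bot$, that $\m_l^\bot$ is $\Ad_{N_l}$-stable, and that $\Ad_n(e) - e \in \m_l^\bot$ for every $n \in N_l$. Granting these, for $x = e + v \in \cS_e$ I can write
\[\Ad_n(x) - e = (\Ad_n(e) - e) + \Ad_n(v) \in \m_l^\bot + \m_l^\bot = \m_l^\bot,\]
which is the desired inclusion.

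First, I would unpack $\m_l^\bot$ with respect to the $\ad_h$-grading. Since the Killing form pairs $\g_i$ non-degenerately with $\g_{-i}$ and annihilates $\g_i \times \g_j$ for $i + j \ne 0$, and since the identity $\om(x, y) = ([x, y], e) = -(x, \ad_e y)$ identifies $l^{\bot_\om}$ with the Killing-orthogonal of $[e, l]$ inside $\g_{-1}$, a weight-by-weight check yields
\[\m_l^\bot = [e, l] \oplus \bigoplus_{i \mr 0} \g_i.\]
The standard $\sl_2$-theoretic fact $\ker \ad_f \sb \bigoplus_{i \mr 0} \g_i$ then supplies the first sub-fact, and I also read off the containment $[\n_l, e] \sb \m_l^\bot$ needed below (since $[l, e] \sb [e, l]$ and $[\g_{\mr -2}, e] \sb \g_{\mr 0}$).

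For the second sub-fact, since $N_l$ is connected it suffices to show $[\n_l, \m_l^\bot] \sb \m_l^\bot$, and by $\Ad$-invariance of the Killing form this is equivalent to $[\n_l, \m_l] \sb \m_l$. Each of the brackets $[l^{\bot_\om}, l]$, $[l^{\bot_\om}, \g_{\mr -2}]$, $[l, \g_{\mr -2}]$, $[\g_{\mr -2}, \g_{\mr -2}]$ lies in $\g_{\mr -2} \sb \m_l$ by the grading, so this step is immediate.

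The third sub-fact I would prove by a connectedness argument: the morphism $\phi \colon N_l \to \g / \m_l^\bot$, $n \mapsto [\Ad_n(e)]$, has differential at any $n_0$ sending $y \in \n_l$ to $[\Ad_{n_0}([y, e])]$; combining $[\n_l, e] \sb \m_l^\bot$ from step one with $\Ad_{N_l}$-stability from step two, this differential vanishes. Since $N_l$ is connected and $\phi(\se) = [e]$, $\phi$ is constantly equal to $[e]$, which is precisely $\Ad_n(e) - e \in \m_l^\bot$. The main technical point is the explicit identification of $\m_l^\bot$ in step one; once that formula is in hand, everything else is formal bookkeeping on the grading.
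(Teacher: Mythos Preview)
Your proof is correct and follows essentially the same route as the paper's: both establish $\ker \ad_f \sb \m_l^\bot$ via the $\ad_h$-grading and then pass from the infinitesimal statement $[\n_l,\, e + \ker \ad_f] \sb \m_l^\bot$ to the global one using connectedness of $N_l$. Your version is a bit more explicit---you compute $\m_l^\bot = [e,l] \oplus \g_{\mr 0}$ directly and separately verify $\Ad_{N_l}$-stability of $\m_l^\bot$ (equivalently $[\n_l,\m_l]\sb\m_l$), a step the paper leaves implicit when it reduces via the exponential map to a single bracket $[x,y+e]$.
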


\begin{proof}
  Let us show that $\ker \ad_f \sb \m_l^\bot$, i.e.~$(y, z) = 0$ for any $y \in \ker \ad_f$
  and $z \in \m_l$. It follows from the inclusions $\ker \ad_f \sb \g_{\mr 0}$,
  $\m_l \sb \g_{\mr -1}$. Next, since $N_l$ is connected and we have the exponential map
  $\n_l\iso N_l$, it is enough to show that for any $x \in \n_l$ and any $y \in \ker \ad_f$
  we have $[x, y + e] \sb \m_l^\bot$, i.e.~$([x, y + e], z) = 0$ for any $z \in \m_l$. First
  let us consider the case $y = 0$. Since the Killing form is invariant, we have
  $([x, e], z) = (x, [e, z])$. Since $x \in \g_{\mr -1}$, $[e, z] \in \g_{\mr 1}$, the non-zero
  pairing can occur only for $x, z \in \g_{-1}$. But in this case by construction $x \in l$,
  $z \in l^{\bot_\om}$, so we always get zero. Second, let us prove $([x, y], z) = 0$ for any $x \in \n_l$, $y \in \m_l$, $z \in \ker \ad_f$. In this case by the representation theory of $\sl_2(\C)$ over $\C$ we have $z \in \g_{\mr 0}$. Since $x, y \in \g_{\mr -1}$, we have $[x, y] \in \g_{\mr -2}$, and hence $([x, y], z) = 0$.
\end{proof}

One of the main results of this section is the following proposition (cf.~\cite[Lemma~2.1]{gg}).

\begin{prop} \label{Ginz-Pr}
	The action map $\ba \colon N_l \times \cS_e \to e + \m_l^\bot$ is an isomorphism.
\end{prop}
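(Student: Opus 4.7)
The plan is to apply a contracting $\C^*$-action argument in the spirit of Kostant's transverse slice theorem. First I would verify that both sides have the same dimension. Standard $\sl_2$-representation theory gives $\dim \ker \ad_f = \dim \g_0 + \dim \g_1$, since each irreducible $\sl_2$-summand contributes exactly one lowest-weight line and the sum telescopes. Using the nondegenerate Killing pairings $\g_i \times \g_{-i} \to \C$ and the nondegeneracy of $\om$ on $\g_{-1}$ (so that $\dim l^{\bot_\om} = \dim \g_{-1} - d$), one computes $\dim \m_l^\bot = \dim \g_{\br 0} + d$, which matches $\dim(N_l \times \cS_e)$ after invoking the symmetry $\dim \g_{\mr -2} = \dim \g_{\br 2}$. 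Next I would compute the differential $d_{(\se, e)} \ba \colon \n_l \op \ker \ad_f \to \m_l^\bot$, which sends $(x, v)$ to $[x, e] + v$. Writing $x = x_{-1} + x'$ with $x_{-1} \in l$ and $x' \in \g_{\mr -2}$, the $\g_1$-component of $[x, e] + v = 0$ reads $[x_{-1}, e] = 0$ (since $\ker \ad_f \sb \g_{\mr 0}$); $\sl_2$-theory makes $\ad_e\colon \g_{-1} \to \g_1$ injective, so $x_{-1} = 0$. Matching each weight-$i$ component for $i \mr 0$ and using the direct sum $\g = [e, \g] \op \ker \ad_f$ together with the injectivity of $\ad_e\colon \g_{i-2} \to \g_i$ for $i - 2 \mr -2$ yields $x' = 0$ and $v = 0$. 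Hence $d_{(\se, e)}\ba$ is injective, hence an isomorphism by the dimension count.

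I would then introduce the contracting $\C^*$-action. Set $\rho(t) = t^2 \Ad(t^{-h})$ on $\g$, which acts on $\g_i$ by $t^{2-i}$; it fixes $e \in \g_2$ and has only positive weights on $\ker \ad_f \sb \g_{\mr 0}$ and on $\m_l^\bot \sb \g_{\mr 1}$, so it restricts to $\C^*$-actions contracting $\cS_e$ and $e + \m_l^\bot$ to $\{e\}$. Since the torus $T$ chosen in the proof of Lemma~\ref{constr Nl} contains the cocharacter $t \mapsto t^{-h}$, conjugation by $t^{-h}$ gives an algebraic $\C^*$-action on $N_l$ with only positive weights on $\n_l \sb \g_{\mr -1}$, contracting $N_l$ to $\{\se\}$. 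A direct computation based on $\Ad(t^{-h}) \Ad(n) \Ad(t^h) = \Ad(t^{-h} n t^h)$ shows that $\ba$ is $\C^*$-equivariant for these actions.

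Finally, the étale locus of $\ba$ is open, $\C^*$-stable, and by the previous step contains the unique attracting fixed point $(\se, e)$; hence the contraction forces $\ba$ to be étale globally. The fiber $\ba^{-1}(e)$ is $\C^*$-stable and finite étale over a point, so each of its $\C^*$-fixed points must equal $(\se, e)$, giving $\ba^{-1}(e) = \{(\se, e)\}$. Since any $y \in e + \m_l^\bot$ satisfies $\rho(t) y \to e$ as $t \to 0$, an étale-neighborhood argument at $(\se, e)$ together with $\C^*$-equivariance propagates this to injectivity and surjectivity of $\ba$ globally; hence $\ba$ is a bijective étale morphism between smooth varieties of equal dimension, so an isomorphism by Zariski's Main Theorem. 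The main obstacle I anticipate is the careful setup of the $\C^*$-equivariance: one has to use the specific torus $T$ from Lemma~\ref{constr Nl} (which contains the cocharacter of $h$) to realize the contraction on $N_l$ algebraically, and verify that the $t^2$-scaling needed to fix $e$ on the Slodowy slice side combines correctly with the pure $\Ad(t^{-h})$-conjugation on $N_l$ so that $\ba$ becomes equivariant.
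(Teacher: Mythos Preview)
Your proposal is correct and follows essentially the same strategy as the paper: a contracting $\C^\times$-action combined with the differential being an isomorphism at the fixed point, then Zariski's Main Theorem. The only cosmetic differences are that you prove \emph{injectivity} of $d_{(\se,e)}\ba$ directly while the paper proves \emph{surjectivity} via the decomposition $\m_l^\bot = [\n_l,e]\oplus\ker\ad_f$, and you stay algebraic using \'etaleness whereas the paper passes to the Lie-group category and invokes the Implicit Function Theorem; the $\C^\times$-equivariance and contraction arguments are the same (the paper writes the cocharacter as $\tilde\gamma(t)=\gamma\bigl(\begin{smallmatrix}t&0\\0&t^{-1}\end{smallmatrix}\bigr)$ coming from the $\SL_2\to G$ homomorphism rather than your $t^{-h}$, which addresses exactly the integrality concern you flag at the end).
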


\begin{proof}
  Since $N_l \times \cS_e$ is irreducible and $e + \m_l^\bot$ is normal, by Zariski main
  theorem~\cite[Theorem 5.2.8]{s} it is enough to prove that the morphism
  $\ba \colon N_l \times \cS_e \to e + \m_l^\bot$ is bijective and birational. Since
  $\char \C = 0$, by~\cite[Theorem 5.1.6]{s}, in order to show birationality it is enough to prove that
  the number of points in a general fiber equals one. So it is enough to show that
  $\ba \colon N_l \times \cS_e \to e + \m_l^\bot$ is bijective. Thus it is enough to work in the
  category of Lie groups. \newline
	
	Let $\ga \colon \SL_2(\C) \to G$ be the homomorphism of Lie groups corresponding to the
 	homomorphism of Lie algebras $\sl_2 \inj \g$, given by the $\sl_2$-triple $(e, f, h)$.
 	Such $\ga$ exists by \cite[Th.~3.41]{k}. Set
	\[\t{\ga} \colon \C^\times \to G,\ \t{\ga}(t) = \ga \begin{pmatrix} t & 0\\ 0 & t^{-1} \end{pmatrix} \]
	We suppose $h = \text{diag}(1, -1)$, so the differential of $\t{\ga}$ at the neutral element $\se$ of $G$ maps $\f{\p}{\p t} \in T_1 \C^\times$ to $h$.
	Clearly $\Ad_{\ga(t)}(e) = t^2 \cd e$, $\Ad_{\ga(t)}(f) = t^{-2} \cd f$. We consider the following $\C^\times$-action on $\g$: $\rho(t)(x) = t^2 \cd \Ad_{\ga(t^{-1})})(x)$ for any $x \in \g$, $t \in \C^\times$. Then $\rho(t)(e) = e, \rho(t)(f) = t^4 \cd f$ for any $t \in \C^\times$, so $\rho(t)$ stabilizes $\ker \ad_f$ and hence $\cS_e$. Since $\ad_h$ preserves $\m_l^{\bot_\om}$, by a standard property of the exponential map (see \cite[\S 3.1]{k}) $\rho(t)$ preserves $\m_l^{\bot_\om}$. So $\rho(t)$ preserves $e + \m_l^{\bot_\om}$. Finally, let us introduce a $\C^\times$-action on $N_l \times \cS_e$ as follows: $t \cd (g, x) = (\ga(t^{-1}) g \ga(t), \rho(t)(x))$. One can easily see that $t \cd (\se, e) = (\se, e)$ and $\ba \colon N_l \times \cS_e \to e + \m_l^\bot$ is $\C^\times$-equivariant map.
	
	\begin{lem} \label{contr}
	  Both $\C^\times$-actions on $N_l \times \cS_e$ and on $e + \m_l^{\bot}$ are contracting with fixed points $(\se, e)$ and $e$ respectively.
	\end{lem}
	
	\begin{proof}
	  First let us prove that $\C^\times$-actions on $e + \m_l^\bot$ and $\cS_e$ are
          contracting with fixed points equal to $e$. Note that if $x \in \g_i$, then
          $\rho(t)(x) = t^{2 - i} \cd x$ for any $t \in \C^\times$. As follows from the
          representation theory of $\sl_2$, $\ker \ad_f \sb \g_{\mr 0}$, so $\C^\times$-action
          on $\cS_e$ is contracting with fixed point $e$. A similar argument can be applied to
          $e + \m_l^\bot$, if we note $\m_l^\bot \sb \g_{\mr 1}$, since $\g_{\mr -2} \sb \m_l$.
		
	  Second let us show that the $\C^\times$-action on $N_l \times \cS_e$ is contracting.
          Actually we need to show that $\C^\times$-action on $N_l$ is contracting. Here
          the explicit construction of $N_l$ given in the proof of Proposition \ref{lie exist} turns
          very useful. Let $T$ be a maximal torus such that its adjoint action preserves $\n_l$,
          and let $h \in \Lie T$. Let $R_{\n_l} = \{\al_1, \ld, \al_h\}$ (see the notation
          of Proposition \ref{lie exist}). So it is enough to check that $t \cd u_{\al_j}(x)$ tends to~$0$
          for any $x \in \C$ and $1 \mr j \mr h$. But $t$ acts on $\Im(u_{\al_j}) = U_{\al_j}$ by
          multiplication with $\al_j(\rho(t)) = t^{-\al_j(h)}$, where $\al_j(h) \br 1$, and that implies the desired claim.
	\end{proof}
	
	So we have a $\C^\times$-equivariant morphism $\ba \colon N_l \times \cS_e \to e + \m_l^\bot$, and both sides are contractible with fixed points $(\se, e)$ and $e = \ba(\se, e)$. Our next goal is to show that $d_{(\se, e)} \ba$ is surjective.
	
	\begin{lem} \label{ml formula}
		$\m_l^\bot = [\n_l, e] \oplus \ker \ad_f$.
	\end{lem}
	
	\begin{proof}
	  By the $\sl_2$-representation theory, we have
          $\dim \ker \ad_f = \dim \g_0 + \dim \g_1$. So
          $\dim \n_l + \dim \g_0 + \dim \g_1 = \dim \g_{\mr -2} + \dim l + \dim \g_1 + \dim \g_0$, $\dim \m_l^\bot = \dim \g - \dim \m_l = \dim \g - \dim \g_{\mr -2} - \dim l^{\bot_\om} = \dim \g_{\mr -2} + 2\dim \g_1 + \dim \g_0 - (\dim \g_1 - \dim l) = \dim \g_{\mr -2} + \dim \g_1 + \dim \g_0 + \dim l$. So $\dim \m_l^\bot = \dim [\n_l, e] + \dim \ker \ad_f$. Next,  the
          $\sl_2$-representation theory implies $[\n_l, e] \cap \ker \ad_f = 0$. So it remains
          to show that $[\n_l, e], \ker \ad_f \sb \m_l^\bot$. But it is already proved in Lemma \ref{incl}.
	\end{proof}
	
	As follows from Lemma \ref{ml formula}, $\dim (N_l \times \cS_e) = \dim (e + \m_l^\bot)$ and
        $d_{(\se, e)} \ba$ is surjective. Therefore, $d_{(\se, e)} \ba$
        is bijective and by the Implicit Function Theorem $\ba$ is an isomorphism of some open neighborhood of $(\se, e)$ (in the classical topology) onto some open neighborhood of $e$ in $e + \m_l^\bot$. Lemma \ref{contr} implies that $\ba$ is bijective, so we are done.
\end{proof}

Recall that we have a character $\chi \in \n_l^\ast$ such that $\chi(x) = (e, x)$. Set
$\psi = \chi |_{N_l}$. Let us prove that $\psi$ is $N_l$-invariant. Since $N_l$ is connected, it is enough to prove that $\psi \circ \ad_y = 0$ for any $y \in \n_l = \Lie N_l$. Indeed, for any $x \in \n_l$ one can write $\psi([x, y]) = ([x, y], e) = (x, [y, e]) = 0$, since $x \in \g_{\mr -1}$, $[y, e] \in \g_{\mr 1}$, and if $x, y \in \g_{\mr -1}$, then $x, y \in l$, and so $\psi([x, y]) = 0$, because $l$ is isotropic. As we know, the twisted cotangent bundle $T_\psi^\ast(G/N_l)$ is an affine bundle over $G/N_l$, and the fiber over $[g]$ can be identified with
$\{\b \in T_g^\ast G : \b(u_{G, x}^R) = \psi(x) \text{ for any $x \in \n_l$}\}$.
The following proposition is the main result of this section.

\begin{prop} \label{slice = twisted}
	Suppose that $l$ is a Lagrangian subspace. Then there is a natural $G$-equivariant isomorphism 
	\[\Xi \colon T_\psi^\ast(G/N_l) \iso G \times \cS_e\]
\end{prop}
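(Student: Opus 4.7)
The plan is to trivialize the cotangent bundle $T^*G$ using left translations together with the Killing form, describe the preimage $Y = \mu_{N_l}^{-1}(\psi)$ concretely in these coordinates, and then appeal to Proposition~\ref{Ginz-Pr} to compute the $N_l$-quotient explicitly.

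First I would identify $T^*G$ with $G \times \g^*$ via the left trivialization $(g, \alpha) \mapsto (g, L_g^*\alpha)$, and further identify $\g^* \cong \g$ via the Killing form. A direct computation shows that in these coordinates the left $G$-action becomes $(g, y) \mapsto (g'g, y)$, while the right $N_l$-action becomes $(g, y)\cdot h = (gh, \Ad_{h^{-1}} y)$ (using invariance of the Killing form). Since $\psi(x) = (x, e)$ for $x \in \n_l$ by definition, the moment map condition defining $Y$ translates into the linear condition $y - e \in \n_l^\bot$. Hence $Y \iso G \times (e + \n_l^\bot)$, with the right $N_l$-action described above.

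The Lagrangian hypothesis on $l$ now enters decisively: it forces $\n_l = \m_l$, hence $\n_l^\bot = \m_l^\bot$, so that Proposition~\ref{Ginz-Pr} provides an isomorphism $\ba : N_l \times \cS_e \iso e + \n_l^\bot$ sending $(h, s) \mapsto \Ad_h(s)$. Composing, we obtain $Y \iso G \times N_l \times \cS_e$; a short calculation transports the right $N_l$-action to $(g, h, s) \cdot h' = (gh', h'^{-1}h, s)$, which is visibly trivial on the $\cS_e$-factor. The map $Y \to G \times \cS_e$ sending $(g, h, s) \mapsto (gh, s)$ is then $N_l$-invariant, surjective, and its fibers are exactly the $N_l$-orbits. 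Since $G \times \cS_e$ is smooth (hence normal) and irreducible, Remark~\ref{tcb 4} identifies $G \times \cS_e$ with the geometric $N_l$-quotient $T_\psi^*(G/N_l)$, producing the desired isomorphism $\Xi$. The $G$-equivariance is immediate, since throughout the construction the left $G$-action only modifies the first $G$-factor.

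The main challenge is not conceptual but bookkeeping: one must carefully verify that all three identifications (the left trivialization of $T^*G$, the Killing form identification $\g^* \cong \g$, and the Gan--Ginzburg isomorphism $\ba$) are simultaneously compatible with the right $N_l$-action. Once this is set up, the existence of $\Xi$ essentially falls out of Proposition~\ref{Ginz-Pr}, with the Lagrangian assumption entering only to supply the crucial identity $\m_l^\bot = \n_l^\bot$.
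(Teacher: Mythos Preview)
Your proposal is correct and follows essentially the same approach as the paper: both trivialize $T^*G$ by left translations and the Killing form, identify the fiber of $Y$ over $\se$ with $e + \n_l^\bot = e + \m_l^\bot$ (using the Lagrangian hypothesis), and then invoke the Gan--Ginzburg isomorphism $\ba$ from Proposition~\ref{Ginz-Pr} to produce the map to $G \times \cS_e$. The paper packages the last step as a single morphism $\t\Xi(g,\alpha) = g\cdot\ba^{-1}(\Phi^{-1}(L_g^\ast\alpha))$ and concludes via Zariski's main theorem, whereas you pass through the intermediate identification $Y \cong G \times N_l \times \cS_e$ and conclude via Remark~\ref{tcb 4}; these are cosmetic variants of the same argument.
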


\begin{proof}
  Since $l$ is Lagrangian, $\n_l = \m_l$. Recall that there is an isomorphism
  $\Phi \colon \g \iso \g^\ast$ given by the Killing form. Then $\Phi(\n_l^\bot) = \Ann(\n_l)$,
  $\Phi(e) = \chi$, so $\Phi(e + \n_l^\bot) = \chi + \Ann(\n_l)$ "coincides" with the fiber of
  $T_\psi^\ast(G/N_l)$ over $[\se]$. This allows us to contruct $\Xi$ explicitly. Recall that we
  constructed $T_\psi^\ast(G/N_l)$ in two steps. First we considered
  $Y = \mu_{N_l}^{-1}(\psi) \sb T^\ast G$ and then took the quotient by the right $N_l$-action.
  Consider the projection $\pi \colon Y \to T_\psi^\ast(G/N_l)$. We will construct $\Xi$ also in
  two steps: first we construct a morphism $\t{\Xi} \colon Y \to G \times \cS_e$, and then
  we check that it descends to the desired isomorphism.
	
  Denote the left $g$-translation on $G$ by $L_g$, so that $L_g(h) = gh$. Then the left
  $g$-action on $T^\ast G$ is $(h, \b) \mapsto (gh, L_{g^{-1}}^\ast(\b))$. Note that
  $L_g^\ast(\b) \in \Phi(e + \n_l^\bot)$, so $\ba^{-1}(\Phi^{-1}(\b))$ is well-defined and equals
  $(h, x)$ for some $x \in \cS_e$, $h \in N_l$. Set $\t{\Xi}(g, \a) = (gh, x)$,
  i.e.~$\t{\Xi}(g, \a) = g \cd \ba^{-1}(\Phi^{-1}(L_g^\ast(\a)))$, where $g$ acts by the left
  multiplication on the first coordinate. Clearly we get a regular surjective $G$-equivariant
  morphism, where $G$ acts on the left.
	
  Let us show that $\t{\Xi}$ is $N_l$-equivariant, where $N_l$ acts on the right. Since
  $\t{\Xi}$ is $G$-equivariant, it is enough to prove that for any $\b \in \Phi(e + \n_l^\bot)$
  (i.e.~over $\se$ in $Y$) and any $q \in \t{\Xi}$ we have $\Ad_q^\ast(\b) = \b$ (since we
  first act by the right translation by $q^{-1}$ and then by the left translation by $q$ to
  return a point in the fiber over $\se$). The latter equality follows from $N_l$-invariance of
  $\psi$. So $\t{\Xi}$ descends to a well-defined surjective $G$-equivariant morphism $\Xi \colon T_\psi^\ast(G/N_l) \sur G \times \cS_e$. Clearly by Proposition \ref{Ginz-Pr} $\Xi$ is bijective, $G \times \cS_e$ is normal, $T_\psi^\ast(G/N_l)$ is irreducible, so by \cite[Theorem 5.1.6]{s} $\Xi$ is birational, and by Zariski main theorem~\cite[Theorem 5.2.8]{s} it is an isomorphism.
\end{proof}

\begin{rem} \label{Xi section}
  Note that the map $\t{\Xi} \colon Y \to G \times \cS_e$ constructed in the proof of
  the previous theorem has a section $s$
  such that $s(g, x) = (d_\se L_{g^{-1}})^\ast(\Phi(x)) \in T_g^\ast G$.
  Indeed, $\t{\Xi}$ is $G$-equivariant (with respect to the left action), $(\se, \Phi(x)) \in Y$, so $(d_\se L_{g^{-1}})^\ast(\Phi(x)) \in Y$. To find $\t{\Xi}(d_\se L_{g^{-1}})^\ast(\Phi(x))$ one needs first to
  move the point to the fiber of $Y$ over $\se$ and then apply $\ba^{-1} \circ \Phi^{-1}$. But after moving we get $\Phi(x)$, where already $x \in \cS_e$, so $\t{\Xi}(s(g, x)) = (g, x)$. 
\end{rem}

Assume that a closed connected subgroup $S \sb G$ stabilizes $\cS_e$, i.e.~$p\cS_e p^{-1} = \cS_e$ for any $p \in S$. Then there is a natural $S$-action on $G \times \cS_e$: $p \cd (g, x) = (gp^{-1}, \Ad_p(x))$. Our goal is to relate this action to the one in \S\ref{tcb 2sided}. One of the assumptions in \S\ref{tcb 2sided} was $S \sb N_G(N_l)$ ($N_l$ here corresponds to $H$ there). However, in case $G = G_2$ such condition does not hold for any $S$ and $l$, so we suppose now that $S \sb N_G(N_l)$. Also assume that $\psi |_{[\fs, \n_l]} = 0$.

\begin{lem} \label{id 2sided}
  The $S$-action on $T_\psi^\ast(G/N_l)$ coming from the right $S$-action on $T^\ast G$
  and defined in \S\ref{tcb 2sided}, coincides via the natural isomorphism
  of Proposition~\ref{slice = twisted} with the natural $S$-action on $G \times \cS_e$ defined in the previous paragraph.
\end{lem}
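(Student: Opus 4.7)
The plan is to trace through how the right $S$-action on $T^\ast G$ transports to $G\times\cS_e$ via the explicit section $s\colon G\times\cS_e\to Y$ of Remark~\ref{Xi section}, which satisfies $\t{\Xi}\circ s=\Id$. Both actions under comparison commute with the left $G$-action (the one on $T^\ast G$ by construction, and the formula $p\cd(g,x)=(gp^{-1},\Ad_p x)$ manifestly), and $\Xi$ is $G$-equivariant by Proposition~\ref{slice = twisted}. Hence it suffices to check equality of the two actions on representatives of the form $s(g,x)$.

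First I apply the right $S$-action on $T^\ast G$ to $s(g,x)\in Y$, obtaining $(gp,\alpha')$ with $\alpha'=(d_{gp}R_{p^{-1}})^\ast s(g,x)$. Then I evaluate $\t{\Xi}(gp,\alpha')=gp\cd\ba^{-1}(\Phi^{-1}(L_{gp}^\ast\alpha'))$. The key observation is that $R_{p^{-1}}\circ L_{gp}\colon h\mapsto gphp^{-1}$ has differential at $\se$ equal to $d_\se L_g\circ\Ad_p$. Combined with the defining property $s(g,x)\circ d_\se L_g=\Phi(x)$ of the section, this yields $L_{gp}^\ast\alpha'=\Phi(x)\circ\Ad_p$. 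Invariance of the Killing form gives $(x,\Ad_p y)=(\Ad_{p^{-1}}x,y)$ for every $y\in\g$, so $\Phi^{-1}(L_{gp}^\ast\alpha')=\Ad_{p^{-1}}x$.

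The assumption that $S$ stabilizes $\cS_e$ now enters: $\Ad_{p^{-1}}x\in\cS_e$, so $\ba^{-1}(\Ad_{p^{-1}}x)=(\se,\Ad_{p^{-1}}x)$ and consequently $\t{\Xi}(gp,\alpha')=(gp,\Ad_{p^{-1}}x)$. This is precisely the right-action version $(g,x)\cd p=(gp,\Ad_{p^{-1}}x)$ of the action $p\cd(g,x)=(gp^{-1},\Ad_p x)$ on $G\times\cS_e$, proving the lemma. The main obstacle is purely bookkeeping: keeping straight the various dualizations and identifications $T_g^\ast G\cong\g^\ast$ produced by left and right translations. The conceptual heart is the factorization $R_{p^{-1}}\circ L_{gp}=L_g\circ C_p$, with $C_p$ denoting conjugation by $p$; after this, the Killing-form invariance automatically produces the $\Ad_{p^{-1}}$-twist on the slice coordinate, matching the natural $S$-action.
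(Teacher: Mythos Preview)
Your proof is correct and follows the same overall strategy as the paper's—a direct computation tracking how the right $S$-action transports through $\Xi$—but your use of the section $s$ of Remark~\ref{Xi section} streamlines the bookkeeping. The paper instead starts from an arbitrary point $(\se,\ga)$ in the fiber of $Y$ over $\se$, writes $(h,x)=\ba^{-1}(\Phi^{-1}(\ga))$ with $h\in N_l$ possibly nontrivial, and must then verify separately that the $N_l$-component transforms as $h\mapsto\Ad_p(h)$, invoking $S\subset N_G(N_l)$ explicitly at that step. By choosing the representative $s(g,x)$ you arrange $h=\se$ from the outset, so only the slice coordinate needs tracking; the factorization $R_{p^{-1}}\circ L_{gp}=L_g\circ C_p$ then delivers the $\Ad_{p^{-1}}$-twist immediately via Killing-form invariance. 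One minor remark: the opening sentence about $G$-equivariance is not actually what justifies the reduction to points $s(g,x)$—the real reason is that $\pi\circ s=\Xi^{-1}$ is surjective onto $T_\psi^\ast(G/N_l)$, so every point has the form $\pi(s(g,x))$—but this does no harm to the argument.
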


\begin{proof}
  Note that both actions commute with the left $G$-action on $T_\psi^\ast(G/N_l)$ and
  $G \times \cS_e$, so it is enough to consider only points in the fiber over $[\se] \in G/N_l$. Suppose that $y = (\se, \ga) \in Y = \mu_{N_l}^{-1}(\psi)$. Take any $p \in S$. By explicit construction of $S$-action on $T_\psi^\ast(G/N_l)$ we have $p \cd \pi(\se, \ga) = \pi(p^{-1}, \b)$, where $\pi \colon Y \to T_\psi^\ast(G/N_l)$ is the natural projection and $\ga = (d_\se R_{p^{-1}})^\ast(\b)$ ($R_p$ is the right translation by $p^{-1}$, and $\b \in T_p^\ast G$). Now let us apply $\Xi = \t{\Xi} \circ \pi$ from the proof of Proposition \ref{slice = twisted} to $(p^{-1}, \b)$.
  First, we need to return this point to the fiber of $Y$ over $\se$ by the left translation by
  $p$. Hence we get $(\se, \Ad_{p^{-1}}^\ast(\ga))$. Then we need to find
  $(h', x') \in N_l \times \cS_e$ such that $\Phi(\ba(h', x')) = \Ad_{p^{-1}}^\ast(\ga)$, where
  $\Phi \colon \g \iso \g^\ast$ is given by the Killing form, and set $\Xi(p^{-1}, \b) = (p^{-1} h', x')$. For this, one may choose $(h, x)$ so that the equality $\Phi(\ba(h, x)) = \ga$ holds true. All we need to show is that $p \cd (h, x) = (ph', x')$, i.e.~$hp^{-1} = p^{-1}h'$ and $\Ad_p(x) = x'$. 
	
  By definition of $\ba$ we have $\Phi(\Ad_h(x)) = \ga$, $\Phi(\Ad_{h'}(x')) = \Ad_{p^{-1}}^\ast(\ga)$ and these equalities uniquely determine $(h, x)$ and $(h', x')$. The latter equality can be rewritten as $(\Ad_{h'}(x'), w) = \ga(\Ad_{p^{-1}}(w))$ for any $w \in \g$. Since the Killing form is $G$-invariant, the latter is equivalent to $(\Ad_{p^{-1} h'}(x'), w) = \ga(w)$ for any
  $w \in \g$, i.e.~$\Phi(\Ad_{p^{-1}h'}(x')) = \ga$. So we have $\Ad_h(x) = \Ad_{p^{-1}h'}(x')$ and
  this equality uniquely determines $h', x'$ if we already know $h, x$. Take $x'' = \Ad_p(x)$, $h'' = \Ad_{p}(h)$. Since $p$ stabilizes $N_l$ and $\cS_e$, we have $x'' \in \cS_e$ and
  $h'' \in N_l$. All in all we have $\Ad_{p^{-1} h''}(x'') = \Ad_{p^{-1} h'' p}(x) = \Ad_{\Ad_{p^{-1}}(h'')}(x) = \Ad_h(x)$, so $h'' = h'$ and $x'' = x'$.
\end{proof}

%----------------------------------------------------------------------------------------

\subsubsection{Losev symplectic structure} \label{id losev}

As in \ref{id ginz}, let $G$ be a connected semisimple linear algebraic group over $\C$. Recall that there is a natural isomorphism $\Phi \colon \g \iso \g^\ast$ given by the Killing form on $\g$. 

\begin{lem} \label{id_los 1}
  The morphism $\Psi \colon G \times \g \to T^\ast G$, $(g, x) \mapsto (d_\se L_{g^{-1}})^\ast(\Phi(x)) \in T_g^\ast G$, where $L_g$ is the left translation by $g$ on $G$, is an isomorphism of algebraic varieties.
\end{lem}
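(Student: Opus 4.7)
The plan is to exhibit $\Psi$ as the composition of two standard isomorphisms: the Killing‐form isomorphism $\Phi\colon\g\iso\g^\ast$ extended trivially to $G\times\g\iso G\times\g^\ast$, and the left‐trivialization of the cotangent bundle $G\times\g^\ast\iso T^\ast G$ given by $(g,\xi)\mapsto(d_\se L_{g^{-1}})^\ast(\xi)$. Since $\Phi$ is linear and bijective with regular inverse, the first factor is clearly an isomorphism of algebraic varieties, so the whole task reduces to showing that left‐trivialization is an algebraic isomorphism.

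To construct an inverse of $\Psi$, first note that for every $g\in G$ the left translation $L_{g^{-1}}\colon G\to G$ is an isomorphism of algebraic varieties (its inverse is $L_g$), hence $d_\se L_{g^{-1}}\colon T_\se G\to T_g G$ is a linear isomorphism, and so is its dual $(d_\se L_{g^{-1}})^\ast\colon T_g^\ast G\to T_\se^\ast G=\g^\ast$. I then define
\[
\Psi^{-1}\colon T^\ast G\to G\times\g,\qquad (g,\a)\longmapsto\bigl(g,\,\Phi^{-1}((d_\se L_{g^{-1}})^\ast(\a))\bigr).
\]
A direct computation using $(d_g L_g^{-1})^\ast = ((d_\se L_{g^{-1}})^\ast)^{-1}$ shows that $\Psi\circ\Psi^{-1}=\Id$ and $\Psi^{-1}\circ\Psi=\Id$, so $\Psi$ is set‐theoretically bijective.

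It remains to verify that both $\Psi$ and $\Psi^{-1}$ are morphisms of algebraic varieties; this is the only real content of the lemma. The map $(g,v)\mapsto d_\se L_{g^{-1}}(v)\in T_g G$ is the restriction to $\{\se\}\times TG\sb TG\times TG$ of the differential of the multiplication morphism $G\times G\to G$, hence is regular in $g$ and $v$ jointly; dualizing (using a local algebraic trivialization of $TG$ and $T^\ast G$) yields the regularity of $\Psi$. The same argument applied to $L_g$ in place of $L_{g^{-1}}$ gives the regularity of $\Psi^{-1}$.

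The main technical obstacle is not conceptual but bookkeeping: one must check that the dualization of a regular map of vector bundles remains regular, and that the formula $(g,\a)\mapsto(d_\se L_{g^{-1}})^\ast(\a)$ really is expressed by regular functions in local coordinates on $G$ (rather than merely holomorphic ones). This is handled by covering $G$ with open affines over which $TG$ is trivialized by left‐invariant vector fields, since in such a trivialization $(d_\se L_{g^{-1}})^\ast$ becomes the identity matrix. Once this is done, the lemma follows.
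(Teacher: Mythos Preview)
Your proof is correct, but it takes a different route from the paper's. The paper gives a one-line argument: $\Psi$ is visibly a bijective morphism between irreducible varieties, and since $T^\ast G$ is smooth (hence normal) and $\char\C=0$, bijectivity forces birationality, whence Zariski's Main Theorem yields that $\Psi$ is an isomorphism. The paper does not construct $\Psi^{-1}$ or verify its regularity at all.

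Your approach, by contrast, is the more elementary and self-contained one: you write down the inverse explicitly and argue that both $\Psi$ and $\Psi^{-1}$ are regular by appealing to the left-invariant trivialization of $TG$ (in which the maps become the identity fiberwise). This avoids the Zariski Main Theorem machinery entirely, at the cost of a bit more bookkeeping. Both arguments are valid; the paper's is slicker but relies on a heavier structural theorem, while yours makes clear exactly why the inverse is algebraic. One small remark: your sentence about restricting the differential of multiplication to ``$\{\se\}\times TG$'' is a bit garbled---what you really want is that $(g,v)\mapsto d_\se L_g(v)$ is the differential of $m\colon G\times G\to G$ along the second factor at $(g,\se)$, which is indeed regular in $(g,v)$; the rest of your trivialization argument then goes through cleanly.
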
 

\begin{proof}
  Clearly $\Psi$ is bijective (so it is birational, since $\char(\C) = 0$,
  see~\cite[Theorem 5.1.6]{s}), $T^\ast G$ is normal, $G \times \g$ is irreducible, so by Zariski Main
  Theorem~\cite[Theorem 5.2.8]{s} $\Psi$ is an isomorphism.
\end{proof}

Pulling back by $\Psi$ we get a symplectic form $\om'$ on $G \times \g$. There is a natural
$G \times G$-action on $G \times \g$: $(g, s) \cdot (p, x) = (gps^{-1}, \Ad_s(x))$. Recall
that $u_{G, x}^L$ denotes the velocity field with respect to $x \in \g$ of the left $G$-action on $G$. Similarly for $u_{G, x}^R$, $u_{T^\ast G, x}^L$ and $u_{T^\ast G, x}^R$.

\begin{lem} \label{id_los 2}
	The natural $G \times G$-action on $G \times \g$ is Hamiltonian and corresponds to the natural two-sided $G\times G$-action on $T^\ast G$. Moreover, the Hamiltonian for $(x, y) \in \g \oplus \g$ equals $H_{(x, y)}(g, z) = (z, (d_\se L_g)^{-1}(u_{G, x}^R - u_{G, y}^L))$.
\end{lem}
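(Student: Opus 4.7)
The plan is to identify $\Psi(g,x)$ with the value at $g$ of the left-invariant $1$-form $\om_x$ on $G$ defined by $\om_x|_\se = \Phi(x)$, and then transport Hamiltonian data from $T^\ast G$ to $G \times \g$ through this picture.

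First I would verify that $\Psi$ is $G \times G$-equivariant, where the target carries the natural two-sided action on $T^\ast G$ lifted from the action $(g,s) \cd p = gps^{-1}$ on $G$. Left translations preserve each $\om_x$ pointwise by construction, while for right translations a short computation using invariance of the Killing form (which gives $\Phi \circ \Ad_s = \Ad_{s^{-1}}^\ast \circ \Phi$) yields $R_s^\ast \om_x = \om_{\Ad_s(x)}$. Combining these two facts gives $\Psi((g,s) \cd (p,x)) = (g,s) \cd \Psi(p,x)$. This equivariance step is where the main care is required: one must track carefully how right translation interacts with $\Ad_s$ through $\Phi$ and keep the sign and side conventions straight, in particular the fact that the action by $s$ on the second factor in the lemma involves inversion.

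Given equivariance, the Hamiltonian structure on $G \times \g$ is inherited from $T^\ast G$. By Fact~\ref{tcb 1}, the Hamiltonian of the right action is $H_x^R = \s_1(u_{G,x}^R)$; by the discussion in \S\ref{tcb la}, the Hamiltonian of the left action is $H_y^L = -\s_1(u_{G,y}^L)$, the minus sign coming from the fact that the left action gives an anti-homomorphism into symplectic vector fields. The combined two-sided Hamiltonian at $(x,y)$ is therefore $\s_1(u_{G,x}^R - u_{G,y}^L)$. Pulling back via $\Psi$, using that $\Psi(g,z)(v) = (z, (d_\se L_g)^{-1}(v))$ for $v \in T_g G$, evaluation on $v = u_{G,x}^R(g) - u_{G,y}^L(g)$ yields exactly the stated formula.
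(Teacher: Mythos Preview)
Your proof is correct and follows essentially the same route as the paper: you establish $G\times G$-equivariance of $\Psi$ by treating the two factors separately (left-invariance of $\om_x$ handles the first factor; the identity $R_s^\ast\om_x=\om_{\Ad_s(x)}$, obtained via $\Ad$-invariance of the Killing form, handles the second), and then transport the Hamiltonian from $T^\ast G$ using Fact~\ref{tcb 1} and the left-action analogue from~\S\ref{tcb la}. The paper does exactly this, only writing out the equivariance checks as direct computations with $(d_\se L_p)^\ast$ rather than in the language of left-invariant $1$-forms.
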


\begin{proof}	
  First we show that the left $G$-action on $G \times \g$ corresponds to the left
  $G$-action on $T^\ast G$. Indeed, let $(p, x)$ correspond to $\ga \in T_p^\ast G$, that is
  $\Phi(x) = (d_\se L_p)^\ast(\ga)$. Next we apply the left translation by $g \in G$ and get $\b \in T_{gp}^\ast G$ such that~$(d_pL_g)^\ast(\b) = \ga$. Since $L_{gp} = L_g L_p$, we have $(d_\se L_{gp})^\ast = (d_\se L_p)^\ast (d_\se L_g)^\ast$. This implies that $\b \in T_{gp}^\ast G$ corresponds to $(gp, x) \in G \times \cS_e$ via $\Psi$.
	
  Next let us analyze the following $G$-action on $G \times \g$:
  $s \cdot (g, x) = (gs^{-1}, \Ad_s(x))$, in terms of $T^\ast G$. We claim that it corresponds to
  the natural right $G$-action on $T^\ast G$. Indeed, let $(p, x)$ correspond to
  $\ga \in T_p^\ast G$, that is $\Phi(x) = (d_\se L_p)^\ast(\ga)$. Next we apply multiplication by $g^{-1} \in G$ on the right and get $\b \in T_{pg^{-1}}^\ast G$ such that~$\b = (d_{pg^{-1}} R_g)^\ast(\ga)$. Let us check that $(pg^{-1}, \Ad_g(x))$ corresponds to $\b$, so we need to prove $(d_\se L_{pg^{-1}})^\ast(\b) = \Phi(\Ad_g(x))$, i.e.~$(d_\se L_{pg^{-1}})^\ast(\b))(y) = (\Ad_g(x), y)$ for any $y \in \g$. Note that 
	\[(d_\se L_{pg^{-1}})^\ast(\b)(y) = \b(d_\se L_{pg^{-1}}(y)) = (d_{pg^{-1}} R_g \circ d_\se L_{pg^{-1}})(y) = (d_\se L_p)(\Ad_{g^{-1}}(y)) = \ga(\Ad_{g^{-1}}(y)) = (x, \Ad_{g^{-1}}(y))\] 
	Since the Killing form is invariant, $(x, \Ad_{g^{-1}}(y)) = (\Ad_g(x), y)$ and we are done. The explicit formula for Hamiltonian follows from Fact \ref{tcb 1} and the similar fact for the left action discussed in the beginning of \S\ref{tcb la}.
\end{proof}

The tangent space to $G \times \g$ at the point $(g, x)$ can be identified with $\g \oplus \g$ as follows: $(u, v) \in \g \oplus \g$ corresponds to $(d_\se L_g(u), v) \in T_g G \oplus \g = T_{(g, x)}(G \times \g)$. Recall that $\om'$ is a symplectic form on $G \times \g$ induced from $T^\ast G$.

\begin{lem} [{cf.~\cite[\S3.1, $4^{th}$~par.]{l2}}] \label{id_los 3}
	For any $w_1 = (u_1, v_1), w_2 = (u_2, v_2) \in \g \oplus \g \simeq T_{(g, x)}(G \times \g)$ holds $\om'(w_1, w_2) = -(x, [u_1, u_2]) - (u_1, v_2) + (u_2, v_1)$, where $(\cd,\:\cd)$ stands for the Killing form on $\g$.
\end{lem}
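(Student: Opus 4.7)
The plan is to compute $\om'=\Psi^\ast\om_{T^\ast G}$ directly from the definition $\om_{T^\ast G}=d\la$, where $\la$ is the Liouville $1$-form on $T^\ast G$, and then verify that $d(\Psi^\ast\la)$ matches the claimed expression. The first step is to identify the pullback $\eta:=\Psi^\ast\la$ explicitly. By Lemma~\ref{id_los 1}, $\Psi(g,x)=(g,\ga)$ with $\ga=(d_\se L_{g^{-1}})^\ast(\Phi(x))$, and the projection $T^\ast G\to G$ composed with $\Psi$ is just the projection $G\times\g\to G$. Hence for $w=(u,v)\in\g\op\g\cong T_{(g,x)}(G\times\g)$,
\[\eta_{(g,x)}(u,v)=\ga\bigl(d_\se L_g(u)\bigr)=\Phi(x)(u)=(x,u).\]

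Next I would extend $w_1=(u_1,v_1)$ and $w_2=(u_2,v_2)$ to vector fields $U_1,U_2$ on $G\times\g$ that are constantly equal to $(u_i,v_i)$ in the chosen left trivialization. Restricted to any slice $G\times\{x_0\}$, $U_i$ becomes the left-invariant vector field $X_{u_i}$ on $G$ with $X_{u_i}(\se)=u_i$; restricted to any slice $\{g_0\}\times\g$, it becomes the constant vector field $v_i$ on $\g$. Applying the Cartan formula
\[d\eta(U_1,U_2)=U_1\eta(U_2)-U_2\eta(U_1)-\eta([U_1,U_2]),\]
I would compute each term. Since $\eta(U_j)|_{(g,x)}=(x,u_j)$ depends only on $x$, its derivative in the direction $U_i|_{(g,x)}$ picks up only the $\g$-component $v_i$, yielding $(v_i,u_j)$. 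The Lie bracket $[U_1,U_2]$ has vanishing $\g$-component (the $\g$-components of $U_i$ are constant on $\g$ and independent of $g$), while its $G$-component equals $[X_{u_1},X_{u_2}]=X_{[u_1,u_2]}$; thus in the trivialization $[U_1,U_2]$ corresponds to $([u_1,u_2],0)$ and $\eta([U_1,U_2])|_{(g,x)}=(x,[u_1,u_2])$.

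Substituting these into the Cartan formula gives $\om'(w_1,w_2)=(v_1,u_2)-(v_2,u_1)-(x,[u_1,u_2])$, which coincides with $-(x,[u_1,u_2])-(u_1,v_2)+(u_2,v_1)$ by the symmetry of the Killing form. The computation is essentially mechanical; the only subtle point is the sign of the $(x,[u_1,u_2])$-term, which depends on the standard convention $[X_{u_1},X_{u_2}]=X_{[u_1,u_2]}$ for left-invariant vector fields under the chosen left trivialization. This is precisely the place where the homomorphism-versus-anti-homomorphism distinction between the right-$H$- and left-$G$-actions (noted just before Lemma~\ref{tcb 9}) enters the calculation.
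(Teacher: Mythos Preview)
Your proof is correct and takes a genuinely different route from the paper. You compute the pullback $\eta=\Psi^\ast\la$ of the Liouville form once and for all as $\eta_{(g,x)}(u,v)=(x,u)$, then differentiate globally via the Cartan formula using vector fields that are constant in the left trivialization; the bracket term $(x,[u_1,u_2])$ falls out from $[X_{u_1},X_{u_2}]=X_{[u_1,u_2]}$ for left-invariant fields. The paper instead first reduces to the fibre over $\se$ by left $G$-equivariance of $\om$, then splits into three cases according to which of $u_i,v_i$ vanish. Cases (i) and (ii) are handled by the pointwise formula $\om((\xi_1,\nu_1),(\xi_2,\nu_2))=\nu_1(\xi_2)-\nu_2(\xi_1)$, but case (iii) ($w_i=(u_i,0)$) is treated by invoking the Hamiltonian structure of the left $G$-action: one has $\om(u^L_{T^\ast G,u_1},u^L_{T^\ast G,u_2})=\{H_{u_1},H_{u_2}\}=H_{[u_1,u_2]}$, which at $(\se,\Phi(x))$ equals $-(x,[u_1,u_2])$. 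Your approach avoids this detour through Poisson brackets and Hamiltonians, and it handles all points $(g,x)$ at once without a separate equivariance step; the paper's approach, on the other hand, makes the connection to the Hamiltonian formalism (used elsewhere in~\S\ref{id losev}) more explicit.
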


\begin{proof}
  First let us show that $\om'$ has the desired form on $T_{(\se, x)} (G \times \g)$. Clearly it is enough to consider only three cases: (i) $w_1 = (u, 0)$, $w_2 = (0, v)$; (ii) $w_{1, 2} = (0, v_{1, 2})$; (iii) $w_{1, 2} = (u_{1, 2}, 0)$. Recall that the value of symplectic form $\om$ on $T^\ast G$ on two tangent vectors $(\xi_{1, 2}, \nu_{1, 2}) \in T_{(g, \a)}(T^\ast G) = T_g G \oplus T_g^\ast G$ equals $\nu_1(\xi_2) - \nu_2(\xi_1)$ (see \S\ref{tcb constr}). Note that $d_{\se, x}\Psi$ maps any vector $v$ tangent to $\g$ to a vector
  $(0, \Phi(v))$ tangent to a fiber of $T^\ast G$. Also $d_{\se, x}\Psi$ maps any tangent vector of the form $(u, 0) \in T_{\se, x} (G \times \g)$ to a tangent vector of the form $(u, \xi) \in T_{\se, \Phi(x)} (T^\ast G)$ (it is important that the first projection is preserved). So in case (i) we get two vectors $(u, \xi)$ and $(0, \Phi(v))$ in $T_{\se, \Phi(x)} (T^\ast G)$, so the value of $\om$ on them is $-\Phi(v)(u) = -(u, v)$. In case (ii) we get two vectors $(0, \Phi(v_{1, 2}))$ in $T_{\se, \Phi(x)} (T^\ast G)$, so the value of $\om$ on them is zero.
	
	The most interesting case is (iii), because we do not know $\xi$ (see the previous paragraph) explicitly. By definition of $\Psi$ the value of $\om'$ on $(u_1, 0)$ and $(u_2, 0)$ equals $\om(u_{T^\ast G, u_1}^L, u_{T^\ast G, u_2}^L)|_{(\se, x)}$. The left $G$-action on $T^\ast G$ is Hamiltonian with Hamiltonian $x \mapsto H_x = -\s_1(u_{G, x}^L)$. It means that $\om(\cd, u_{T^\ast G, x}^L) = dH_x$ for any $x \in \g$. So by \cite[formula (1.2.3)]{cg} we have $\om(u_{T^\ast G, y}^L, u_{T^\ast G, z}^L) = \{H_y, H_z\}$, where $\{\cd, \: \cd\}$ stands for the Poisson bracket on $\O(T^\ast G)$. By definition of the Hamiltonian action (see \cite[Definition 1.4.1, \S 1.2]{cg}) we have $\{H_y, H_z\} = H_{[y, z]}$ for any $y, z \in \g$. So $\om(u_{T^\ast G, u_1}^L, u_{T^\ast G, u_2}^L)|_{(\se, \Phi(x))} = H_{[u_1, u_2]}(\se, \Phi(x)) = -\Phi(x)([u_1, u_2]) = -(x, [u_1, u_2])$ and we are done for the points of the form $(\se, x)$.
	
	Now it remains to show that $\om'$ has the desired form at any point in $G \times \g$. We actually need to compute $\om$ on two vectors $d_{(g, x)} \Psi((d_\se L_g)(u_{1, 2}), v_{1, 2})$.
        Note that $\om$ is $G$-equivariant $2$-form with respect to the action coming from the
        left $G$-action on $G$. The left action by $g^{-1}$ on $T^\ast G$ corresponds to the following action on $G \times \g$: $(p, y) \mapsto (g^{-1}p, y)$. So it is enough to compute $\om'$ on the pair $(d_g R_{g^{-1}}(d_\se R_g)(u_{1, 2}), v_{1, 2}) = (u_{1, 2}, v_{1, 2})$, and we have done
        this already.
\end{proof}

Since $\scrX = G \times \cS_e \sb G \times \g$, we can restrict the symplectic form $\om'$ to
$\scrX$. This construction appears in~\cite[\S3.1, $4^{th}$~paragraph]{l2}. The tangent space to $\scrX$ at the point $(g, x)$ can be identified with $\g \oplus \ker \ad_f$ as follows: $(u, v)$ corresponds to $(d_\se L_g(u), v) \in T_g G \oplus T_x \cS_e$. The key fact is that the resulting $2$-form on $\scrX$ is symplectic. Although is was proved in~\cite[\S2]{l1} in a large generality, we give a shorter proof in our particular case.

\begin{lem} \label{id_los 4}
	The $2$-form $\om'|_{\scrX}$ is symplectic.
\end{lem}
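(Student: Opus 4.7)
The plan is to identify $\om'|_\scrX$ with the pullback of the canonical twisted symplectic form on $T_\psi^\ast(G/N_l)$ via the isomorphism of Proposition~\ref{slice = twisted}, thereby deducing the lemma from Lemma~\ref{tcb 8} instead of checking closedness and non-degeneracy by hand.

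First I would produce a Lagrangian $l \sb \g_{-1}$ stable under a maximal torus containing $h$. The symplectic form on $\g_{-1}$ is non-degenerate (or $\g_{-1} = 0$), so $\dim \g_{-1}$ is even and Lagrangian subspaces exist; applying the Borel fixed point theorem to the Lagrangian Grassmannian exactly as in the proof of Lemma~\ref{constr Nl} provides such a torus-invariant $l$, and produces a connected subgroup $N_l \sb G$ to which Proposition~\ref{slice = twisted} applies. We obtain a $G$-equivariant isomorphism $\Xi \colon T_\psi^\ast(G/N_l) \iso \scrX$, and the proof of Lemma~\ref{tcb 8} gives a symplectic form $\t{\om}$ on $T_\psi^\ast(G/N_l)$ with $\pi^\ast \t{\om} = \om|_Y$, where $\pi \colon Y = \mu_{N_l}^{-1}(\psi) \to T_\psi^\ast(G/N_l)$ is the quotient projection and $\om$ is the canonical symplectic form on $T^\ast G$.

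The key input is then Remark~\ref{Xi section}: the map $s \colon \scrX \to Y$ sending $(g, x)$ to $(d_\se L_{g^{-1}})^\ast(\Phi(x))$ is a section of $\t{\Xi} = \Xi \circ \pi$, and it coincides with the restriction to $\scrX$ of the isomorphism $\Psi$ of Lemma~\ref{id_los 1}. From $\Xi \circ (\pi \circ s) = \mathrm{id}_\scrX$ we deduce $\Xi^{-1} = \pi \circ s$, and hence
\[(\Xi^{-1})^\ast \t{\om} = s^\ast \pi^\ast \t{\om} = s^\ast(\om|_Y) = (\Psi|_\scrX)^\ast \om = (\Psi^\ast \om)|_\scrX = \om'|_\scrX,\]
using the defining equality $\om' = \Psi^\ast \om$. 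Since $\t{\om}$ is symplectic and $\Xi$ is an isomorphism, $\om'|_\scrX$ is symplectic.

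The only mild subtlety is the existence of a Lagrangian $l$, since Proposition~\ref{slice = twisted} was formulated under that hypothesis; as noted above, this is not a genuine obstacle. The alternative, purely direct non-degeneracy check from Lemma~\ref{id_los 3} looks like the real technical difficulty: by $G$-invariance one localizes at $(\se, x)$, where a null vector $(u_1, v_1) \in \g \oplus \ker \ad_f$ is forced to satisfy $u_1 \in (\ker \ad_f)^\bot = [f, \g]$ and $v_1 = [x, u_1]$, and one then has to exploit the constraint $v_1 \in \ker \ad_f$ together with $\sl_2$-representation theory and the form $x = e + \xi$, $\xi \in \ker \ad_f$, to conclude $u_1 = 0$. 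The route through the twisted cotangent bundle sidesteps this computation entirely, which is why I would take it.
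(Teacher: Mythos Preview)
Your argument is correct and non-circular: Proposition~\ref{slice = twisted} is proved purely as an isomorphism of varieties, Remark~\ref{Xi section} and Lemma~\ref{tcb 8} are independent of Lemma~\ref{id_los 4}, and the chain $(\Xi^{-1})^\ast\t\om = s^\ast\pi^\ast\t\om = (\Psi|_\scrX)^\ast\om = \om'|_\scrX$ is exactly right. You have in fact proved Lemma~\ref{id_los 4} and Proposition~\ref{id_los 5} in one stroke.

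The paper takes the opposite route: it gives a direct, self-contained non-degeneracy check at $(\se,x)$ using the explicit formula of Lemma~\ref{id_los 3}. Given a nonzero $(u,v)\in\g\oplus\ker\ad_f$, pairing against $(w,0)$ forces $[x,u]=v$, pairing against $(0,w)$ forces $u\perp\ker\ad_f$, and then an $\sl_2$-weight argument on the top $h$-weight component $u_i$ shows $[e,u_i]\neq0$ and hence $[f,[x,u]]\neq0$, contradicting $v\in\ker\ad_f$. This is the computation you sketched at the end and chose to avoid. The paper's approach is independent of the twisted cotangent bundle machinery and works for any $\sl_2$-triple without first producing a torus-stable Lagrangian $l$; your approach is cleaner but imports the whole apparatus of \S\ref{tcb} and \S\ref{id ginz}, and requires the mild extension of Lemma~\ref{constr Nl} to the Lagrangian case (which, as you say, is immediate from Borel's fixed point theorem applied to the Lagrangian Grassmannian).
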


\begin{proof}
  Since the pull-back of a closed $2$-form is again closed, $\om'|_{\scrX}$ is closed.
  The only thing we need to show is that $\om'$ is non-degenerate. Take any non-zero
  $(u, v) \in T_{(g, x)} \scrX \simeq \g \oplus \ker \ad_f$. If there exists $w \in \g$
  such that $-(x, [u, w]) + (w, v) \ne 0$, then $\om'((u, v), (w, 0)) \ne 0$ and we are done.
  So suppose $-(x, [u, \cd]) + (v, \cd) \equiv 0$, that is $[x, u] = v$, since the Killing form is invariant and non-degenerate. If there exists $w \in \ker \ad_f$ such that $(u, w) \ne 0$, then $\om'((u, v), (0, w)) \ne 0$ and we are done. So suppose $u \perp \ker \ad_f$. For $y \in \g$, by $y_i$ we denote the $h$-weight component of $y$ lying in $\g_i$. Consider the maximal $i$ such that $u_i \ne 0$. Let us show that $[e, u_i] \ne 0$. It is clear that $u_i \bot \ker \ad_f$, since $\ker \ad_f$ also has the $h$-weight decomposition. From the $\sl_2$-representation theory we know that $\ker \ad_f \oplus \Im(\ad_e) = \g$, so that $u_i \not \bot \Im(\ad_e)$, and hence $(u_i, [w, e]) \ne 0$ for some $w \in \g$. Therefore, $([u_i, e], w) = (u_i, [e, w]) \ne 0$, so that $[u_i, e] \ne 0$. Since $x \in e + \ker \ad_f$ and $\ker \ad_f \sb \g_{\mr 0}$, the maximal $j$ such that $[x, u]_j \ne 0$ is equal to $i + 2$, and in this case $[x, u]_{i+2} = [e, u_i] \ne 0$. Since $[e, u_i] \ne 0$, it follows from the $\sl_2$-representation theory that $[f, [e, u_i]] \ne 0$. Meanwhile, it is clear that $[f, [x, u]]_i = [f, [x, u]_{i+2}] = [f, [e, u_i]] \ne 0$, what contradicts $[x, u] \in \ker \ad_f$.
\end{proof}

Suppose that $S \sb G$ is a connected closed subgroup that stabilizes $\cS_e$. Then
a natural $G \times S$-action on $G \times \cS_e$ (namely, $(g, s) \cdot (p, x) = (gps^{-1}, \Ad_s(x))$)
is Hamiltonian, since the ambient $G \times G$-action (see Lemma \ref{id_los 2} and a paragraph before) is Hamiltonian.

Recall that in \S\ref{id ginz} we have constructed an isomorphism $\Xi \colon T_{\psi}^\ast(G/N_l) \iso G \times \cS_e$. We have a symplectic structure on $T_\psi^\ast(G/N_l)$ discussed in \S\ref{tcb constr}, that induces a symplectic structure on $G \times \cS_e$.

\begin{prop} \label{id_los 5}
	The symplectic structures on $G \times \cS_e$ coincide: one coming from the isomorphism $G \times \cS_e \simeq T_\psi^\ast(G/N_l)$ (see Proposition \ref{slice = twisted}), and another coming from $G \times \g$.
\end{prop}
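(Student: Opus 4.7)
The plan is to reduce the comparison of the two symplectic forms to a single identification of a section of the quotient map with the isomorphism $\Psi$ of Lemma~\ref{id_los 1}. Let $\t\om$ denote the symplectic form on $T_\psi^\ast(G/N_l)$ constructed in Lemma~\ref{tcb 8}, so by construction $\pi^\ast(\t\om)=\om|_Y$, where $\om$ is the canonical symplectic form on $T^\ast G$ and $Y=\mu_{N_l}^{-1}(\psi)$. Let $\t{\om}_1=(\Xi^{-1})^\ast(\t\om)$ be the form that $\Xi$ transports to $G\times\cS_e$, and let $\om'$ be the form $\Psi^\ast(\om)$ on $G\times\g$. Our goal is then to show $\t{\om}_1=\om'|_{G\times\cS_e}$.

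The first step is to observe that the section $s\colon G\times\cS_e\to Y$ described in Remark~\ref{Xi section}, given by $s(g,x)=(d_\se L_{g^{-1}})^\ast(\Phi(x))$, is literally equal to the restriction of $\Psi$ to $G\times\cS_e\subset G\times\g$. The second step is to record the commutation $\pi\circ s=\Xi^{-1}$, which follows directly from the identity $\t{\Xi}\circ s=\mathrm{Id}_{G\times\cS_e}$ in Remark~\ref{Xi section} and from $\Xi\circ\pi=\t{\Xi}$ (the defining factorization of $\Xi$ through the geometric quotient $\pi$).

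Combining these two observations, I obtain the chain
\[
(\Xi^{-1})^\ast(\t\om)=(\pi\circ s)^\ast(\t\om)=s^\ast(\pi^\ast\t\om)=s^\ast(\om|_Y)=(\Psi|_{G\times\cS_e})^\ast(\om)=\om'|_{G\times\cS_e},
\]
which is exactly the required equality $\t{\om}_1=\om'|_{G\times\cS_e}$. Note that the restriction $\om|_Y$ can be replaced by $\om$ in the pullback along $s$ since $s$ takes values in $Y$.

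The only substantive point in this argument is the identification $s=\Psi|_{G\times\cS_e}$ and the fact that $s$ is indeed a section of $\t\Xi$; both are immediate from Remark~\ref{Xi section}, which shows that $(\se,\Phi(x))$ lies in the fiber of $\t{\Xi}$ over $(\se,x)$ for $x\in\cS_e$ and that left $G$-equivariance of $\t{\Xi}$ transports this to the general fiber. Thus the proof is essentially a diagram chase once the section formula is identified with $\Psi$, and no further computation with the explicit expression of Lemma~\ref{id_los 3} is needed.
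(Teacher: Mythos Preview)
Your proof is correct and follows essentially the same approach as the paper's: both identify the section $s$ of Remark~\ref{Xi section} with the restriction $\Psi|_{G\times\cS_e}$, and then use that the symplectic forms on $G\times\g$ and $T^\ast G$ match under $\Psi$ while the form on $T_\psi^\ast(G/N_l)$ is induced from $T^\ast G$ via $\pi$. Your presentation is slightly more explicit in writing out the pullback chain $(\Xi^{-1})^\ast\t\om=s^\ast\pi^\ast\t\om=s^\ast(\om|_Y)=\om'|_{G\times\cS_e}$, whereas the paper leaves this last step as a verbal summary; but the substance is identical.
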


\begin{proof}
  Note that both symplectic forms are invariant with respect to the left $G$-action, so it is enough to consider only points of the form $(\se, x)$. Let $Y = \mu_{N_l}^{-1}(\psi)$, so
  $T_\psi^\ast(G/N_l) = Y \sslash N_l$ and $\pi \colon Y \to T_\psi^\ast(G/N_l)$ is the quotient map. Let $s \colon G \times \cS_e \to Y$ be a section of $\t{\Xi}$ (see Remark \ref{Xi section}).
  Recall that $s(g, x) = (d_\se L_{g^{-1}})^\ast(\Phi(x))$, so this section can be extended to the
  whole $G \times \g$ (of course, $Y$ is replaced by $T^\ast G$). But then we get $\Psi$
  defined in Lemma \ref{id_los 1}, so $s = \Psi |_\scrX$. Recall that the symplectic form on $T_\psi^\ast(G/N_l)$ was induced from $Y$ (and hence from $T^\ast G$), while the symplectic form on $G \times \cS_e$ was induced from $G \times \g$. Furthermore, the symplectic forms on $G \times \g$ and
  $T^\ast G$ match under the isomorphism $\Psi$. It follows that the symplectic forms on
  $G \times \cS_e$ and $T_\psi^\ast(G/N_l)$ match under the isomorphism $\Xi$, and we are done.
\end{proof}
 
%----------------------------------------------------------------------------------------

\subsubsection{Counting relevant orbits} \label{id ro}

Recall what we are proving. Consider the simple linear algebraic group $G$ of type $G_2$ over
$\C$ and fix a maximal torus with Cartan Lie algebra $\ft$. We will use the explicit
description of $\g_2$ given in \S\ref{ap g2}. Let $e = e_1$ be a nilpotent element corresponding
to the short root $\al$ in~Figure~\ref{1}.

\begin{figure}[h]
\[\begin{tikzpicture}[x=0.75pt,y=0.75pt,yscale=-1,xscale=1]
	%uncomment if require: \path (0,300); %set diagram left start at 0, and has height of 300
	
	%Straight Lines [id:da5032145436976221] 
	\draw [color={rgb, 255:red, 0; green, 0; blue, 0 }  ,draw opacity=1 ]   (270.08,160.32) -- (337.66,199.12) ;
	\draw [shift={(339.4,200.12)}, rotate = 209.86] [color={rgb, 255:red, 0; green, 0; blue, 0 }  ,draw opacity=1 ][line width=0.75]    (10.93,-3.29) .. controls (6.95,-1.4) and (3.31,-0.3) .. (0,0) .. controls (3.31,0.3) and (6.95,1.4) .. (10.93,3.29)   ;
	%Straight Lines [id:da5526651903681972] 
	\draw [color={rgb, 255:red, 0; green, 0; blue, 0 }  ,draw opacity=1 ]   (270.08,160.32) -- (337.47,121.19) ;
	\draw [shift={(339.2,120.19)}, rotate = 149.86] [color={rgb, 255:red, 0; green, 0; blue, 0 }  ,draw opacity=1 ][line width=0.75]    (10.93,-3.29) .. controls (6.95,-1.4) and (3.31,-0.3) .. (0,0) .. controls (3.31,0.3) and (6.95,1.4) .. (10.93,3.29)   ;
	%Straight Lines [id:da3203155247791192] 
	\draw [color={rgb, 255:red, 0; green, 0; blue, 0 }  ,draw opacity=1 ] [dash pattern={on 4.5pt off 4.5pt}]  (270.08,160.32) -- (269.89,82.4) ;
	\draw [shift={(269.88,80.4)}, rotate = 89.86] [color={rgb, 255:red, 0; green, 0; blue, 0 }  ,draw opacity=1 ][line width=0.75]    (10.93,-3.29) .. controls (6.95,-1.4) and (3.31,-0.3) .. (0,0) .. controls (3.31,0.3) and (6.95,1.4) .. (10.93,3.29)   ;
	%Straight Lines [id:da44477840450387185] 
	\draw [color={rgb, 255:red, 0; green, 0; blue, 0 }  ,draw opacity=1 ]   (270.08,160.32) -- (270.27,238.25) ;
	\draw [shift={(270.28,240.25)}, rotate = 269.86] [color={rgb, 255:red, 0; green, 0; blue, 0 }  ,draw opacity=1 ][line width=0.75]    (10.93,-3.29) .. controls (6.95,-1.4) and (3.31,-0.3) .. (0,0) .. controls (3.31,0.3) and (6.95,1.4) .. (10.93,3.29)   ;
	%Straight Lines [id:da32546203609245294] 
	\draw [color={rgb, 255:red, 0; green, 0; blue, 0 }  ,draw opacity=1 ] [dash pattern={on 0.84pt off 2.51pt}]  (270.08,160.32) -- (202.69,199.45) ;
	\draw [shift={(200.96,200.45)}, rotate = 329.86] [color={rgb, 255:red, 0; green, 0; blue, 0 }  ,draw opacity=1 ][line width=0.75]    (10.93,-3.29) .. controls (6.95,-1.4) and (3.31,-0.3) .. (0,0) .. controls (3.31,0.3) and (6.95,1.4) .. (10.93,3.29)   ;
	%Straight Lines [id:da7999489224837872] 
	\draw [color={rgb, 255:red, 0; green, 0; blue, 0 }  ,draw opacity=1 ] [dash pattern={on 0.84pt off 2.51pt}]  (202.5,121.52) -- (270.08,160.32) ;
	\draw [shift={(200.76,120.53)}, rotate = 29.86] [color={rgb, 255:red, 0; green, 0; blue, 0 }  ,draw opacity=1 ][line width=0.75]    (10.93,-3.29) .. controls (6.95,-1.4) and (3.31,-0.3) .. (0,0) .. controls (3.31,0.3) and (6.95,1.4) .. (10.93,3.29)   ;
	%Straight Lines [id:da4473930509122901] 
	\draw [color={rgb, 255:red, 0; green, 0; blue, 0 }  ,draw opacity=1 ] [dash pattern={on 0.84pt off 2.51pt}]  (247.89,122.29) -- (270.08,160.32) ;
	\draw [shift={(246.88,120.56)}, rotate = 59.74] [color={rgb, 255:red, 0; green, 0; blue, 0 }  ,draw opacity=1 ][line width=0.75]    (10.93,-3.29) .. controls (6.95,-1.4) and (3.31,-0.3) .. (0,0) .. controls (3.31,0.3) and (6.95,1.4) .. (10.93,3.29)   ;
	%Straight Lines [id:da9907956439524079] 
	\draw [color={rgb, 255:red, 0; green, 0; blue, 0 }  ,draw opacity=1 ]   (270.08,160.32) -- (292.27,122.29) ;
	\draw [shift={(293.28,120.56)}, rotate = 120.26] [color={rgb, 255:red, 0; green, 0; blue, 0 }  ,draw opacity=1 ][line width=0.75]    (10.93,-3.29) .. controls (6.95,-1.4) and (3.31,-0.3) .. (0,0) .. controls (3.31,0.3) and (6.95,1.4) .. (10.93,3.29)   ;
	%Straight Lines [id:da6988764096845907] 
	\draw [color={rgb, 255:red, 0; green, 0; blue, 0 }  ,draw opacity=1 ]   (270.08,160.32) -- (314.08,160.17) ;
	\draw [shift={(316.08,160.16)}, rotate = 179.8] [color={rgb, 255:red, 0; green, 0; blue, 0 }  ,draw opacity=1 ][line width=0.75]    (10.93,-3.29) .. controls (6.95,-1.4) and (3.31,-0.3) .. (0,0) .. controls (3.31,0.3) and (6.95,1.4) .. (10.93,3.29)   ;
	%Straight Lines [id:da634249439963843] 
	\draw [color={rgb, 255:red, 0; green, 0; blue, 0 }  ,draw opacity=1 ] [dash pattern={on 0.84pt off 2.51pt}]  (270.08,160.32) -- (225.28,160.55) ;
	\draw [shift={(223.28,160.56)}, rotate = 359.71] [color={rgb, 255:red, 0; green, 0; blue, 0 }  ,draw opacity=1 ][line width=0.75]    (10.93,-3.29) .. controls (6.95,-1.4) and (3.31,-0.3) .. (0,0) .. controls (3.31,0.3) and (6.95,1.4) .. (10.93,3.29)   ;
	%Straight Lines [id:da03778935647849746] 
	\draw [color={rgb, 255:red, 0; green, 0; blue, 0 }  ,draw opacity=1 ]   (270.08,160.32) -- (292.28,198.83) ;
	\draw [shift={(293.28,200.56)}, rotate = 240.03] [color={rgb, 255:red, 0; green, 0; blue, 0 }  ,draw opacity=1 ][line width=0.75]    (10.93,-3.29) .. controls (6.95,-1.4) and (3.31,-0.3) .. (0,0) .. controls (3.31,0.3) and (6.95,1.4) .. (10.93,3.29)   ;
	%Straight Lines [id:da4007081749844128] 
	\draw [color={rgb, 255:red, 0; green, 0; blue, 0 }  ,draw opacity=1 ]   (270.08,160.32) -- (247.89,198.04) ;
	\draw [shift={(246.88,199.76)}, rotate = 300.47] [color={rgb, 255:red, 0; green, 0; blue, 0 }  ,draw opacity=1 ][line width=0.75]    (10.93,-3.29) .. controls (6.95,-1.4) and (3.31,-0.3) .. (0,0) .. controls (3.31,0.3) and (6.95,1.4) .. (10.93,3.29)   ;
	%Straight Lines [id:da6634388969044482] 
	\draw [color={rgb, 255:red, 0; green, 0; blue, 0 }  ,draw opacity=1 ]   (195.12,229.36) -- (347.28,90.72) ;
	
	% Text Node
	\draw (328.32,152.16) node [anchor=north west][inner sep=0.75pt]  [font=\scriptsize]  {$\alpha $};
	% Text Node
	\draw (186.32,110.36) node [anchor=north west][inner sep=0.75pt]  [font=\scriptsize]  {$\beta $};
\end{tikzpicture}\]
\caption{Roots of $G_2$}
\label{1}
\end{figure}
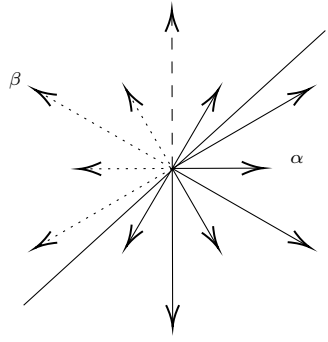

Clearly there is a nilpotent element $f = -f_1$, corresponding to the opposite root $-\al$, and $h = 3 E_{11} - \Id \in \ft$ such that $(e, f, h)$ form an $\sl_2$-triple. Set $\cS_e = e + \ker \ad_f$  and put $\scrX = G \times \cS_e$. Recall the symplectic structure on $\scrX$ defined in~\S\ref{id losev}. There are two roots orthogonal to $\pm\al$, namely $\pm (3\al + 2\be)$. Consider the corresponding semi-simple subgroup $R$ isomorphic to $\SL_2$. Indeed, $R$ is a semisimple group of rank $1$, so $R \cong \SL_2$ or $R \cong \PSL_2$. Since $R$ has an irreducible 2-dimensional subrepresentation in $\g_2$ formed by a direct sum of root subspaces corresponding to $\al + \be$, $-\be - 2\al$, and $\PSL_2$ has no irreducible 2-dimensional representations, we get $R \not \cong \PSL_2$. 

The subgroup $R$ acts on $\cS_e$ by conjugation,
and there is a Hamiltonian (see Lemma \ref{id_los 2}) $G \times R$-action on
$\scrX$: $(g, p) \cd (q, x) = (gqp^{-1}, \Ad_p(x))$, where $g, q \in G$, $x \in \cS_e$,
$p \in R$. Set $\t{G} = G \times R$ and consider a Borel subgroup $\t{B} \sb \t{G}$.
Then the $\t{B}$-action on $\scrX$ is also Hamiltonian, so one can consider the
\emph{moment map} $\mu_{\t{B}} \colon \scrX \to (\Lie \t{B})^\ast$. Set
$\La_\scrX = \mu_{\t{B}}^{-1}(0)$. Then we are going to prove that $\La_\scrX$ is Lagrangian and
has~$7$ irreducible components.

Consider the $h$-weight decomposition discussed in \S\ref{id ginz}. Since the the present part of the paper by $\g_2$ we denote the Lie algebra of $G_2$, we write the weights with respect to $h$ in brackets.
\[\begin{tabular}{ | m{2cm} | m{2cm}| m{2cm} | m{2cm} | m{2cm} | m{2cm} | m{2cm} |} 
	\hline
	$\g_{(-3)}$ & $\g_{(-2)}$ & $\g_{(-1)}$ & $\g_{(0)}$ & $\g_{(1)}$ & $\g_{(2)}$ & $\g_{(3)}$\\ 
	\hline
	$E_{21}, E_{31}$ & $f_1$ & $e_2, e_3$ & $\ft, E_{23}, E_{32}$ & $f_2, f_3$ & $e_1$ & $E_{12}, E_{13}$\\ 
	\hline
\end{tabular}\]
Since $\dim \g_{(-1)} = 2$, any one-dimensional subspace is Lagrangian. Choose
$l = \Span_\C(e_2)$. It is $T$-invariant, and $\n_l = \Span_\C(E_{21}, E_{31}, f_1, e_2)$
(it corresponds to the weights $-3\al - \be, -\al, \be, \al + \be$, see the dotted weights in~Figure~\ref{1}). Let $N_l$ be a closed connected subgroup of $G$ with the Lie algebra $\n_l$. It was constructed in~\S\ref{ap sub}. In Proposition \ref{slice = twisted} we have constructed a $G$-equivariant isomorphism $\scrX \iso T_{\psi}^\ast(G/N_l)$. 

Since all Borel subgroups are conjugate, the choice of $\t{B}$ plays no role. Denote by $S$
the Borel subgroup of $R$ with Lie algebra $\fs = \Span_\C(E_{23}, E_{22} - E_{33})$
(its unipotent part corresponds to the weight $3\al + 2\be$, see the dashed weight
in~Figure~\ref{1}). Then it is clear from~Figure~\ref{1} that $[\fs, \n_l] \sb \n_l$. Let $B$ be the Borel subgroup in $G$ containing $T$ and the roots to the
north-east of the line in Figure \ref{1}.
Then one can choose $\t{B} = B \times S$. By construction in \S\ref{tcb 2sided} there is a natural two-sided action
$\t{B} \circar T_\psi(G/N_l)$ coming from the two-sided action on $T^\ast G$. By Lemma \ref{id_los 2}
this $\t{B}$-action matches the $\t{B}$-action on $\scrX$ under the isomorphism
$\scrX \simeq T_{\psi}^\ast(G/N_l)$. Also by Proposition \ref{id_los 5} the symplectic structures on
$G \times \cS_e$ (defined in \S\ref{id losev}) and on $T_\psi^\ast(G/N_l)$ (defined in \S\ref{tcb constr}) coincide. Thus we can analyze two-sided $\t{B}$-action on $T_\psi^\ast(G/N_l)$.
By Proposition \ref{tcb 16}, $\La_\scrX$ is Lagrangian and the number of its irreducible components equals
the number of relevant $B$-orbits in $G/S \ltimes N_l$. So it only remains to count the
relevant orbits. \newline

Denote by $T' \sb T$ the maximal torus in $S$ with Lie algebra $\ft' = \Span_\C(E_{22} - E_{23})$.
Set $U_4 = N_l$, $U_6 = B_u$, i.e.~the unipotent subgroup with Lie algebra
$\u_6 = \Span_\C(E_{31}, f_1, E_{21}, e_2, E_{23}, f_3)$ (and weights
$-3\al - \be, -\al, \be, \al + \be, 3\al + 2\be, 2\al + \be$ in~Figure~\ref{1}). Also one can easily
prove, using results of \S\ref{ap sub}, that $U_5 \rtimes T' = S \ltimes N_l$, where $U_5$ is the
unipotent subgroup with Lie algebra $\u_5 = \Span_\C(E_{31}, f_1, E_{21}, e_2, E_{23})$
(and weights $-3\al - \be, -\al, \be, \al + \be, 3\al + 2\be$ in~Figure~\ref{1}). So
$U_4 \sb U_5 \sb U_6$. We want to count the relevant $B$-orbits in $G/U_5 \rtimes T'$.
By~Definition~\ref{tcb 10} it is equivalent to counting $U_5 \rtimes T'$-orbits $\bO$ in $G/B$
such that $\psi |_{\Lie\Stab_{U_5 \rtimes T'}(x)} = 0$ for some (and hence any) $x \in \bO$. 

Let $W = N_G(T)/T$ be the Weyl group of $(G,T)$. For $w \in W$ denote by $\dot w$ a
representative of $w$ in $N_G(T)$. Note that $\dot w B$ is $T$-fixed point in $\B$.
Consider the Bruhat decomposition of $\B = G/B$ (see \cite[\S8.3]{s}), so
\[\B = \bigsqcup_{w \in W} B \dot w B = \bigsqcup_{w \in W} U_6 \dot w B = \bigsqcup_{w \in W} U_{w^{-1}} \dot w B\]
where $U_{w^{-1}}$ is a unipotent subgroup of $U_6$ with Lie algebra $\bigoplus_{\al \in R^+(B) \cap w(- R^+(B))} \g_\al$ (see \cite[\S 8.3.1, Lemma 8.3.6]{s}). Each \emph{Bruhat cell} $C(w) = U_{w^{-1}} \dot w B$ is isomorphic to
$U_{w^{-1}}$ via $u \mapsto u {\dot w} B$. Clearly $U_5$ (and even $U_6$) preserves any Bruhat cell while acting on the left. Also each Bruhat cell is a single $U_6$-orbit (where $U_6$ acts, as usual, on the left). For $w \in W$ denote by $\o{U}_w$ the $T$-stable unipotent subgroup of $U_5$ such that $\o{\u}_w = \Lie \o{U}_w = \bigoplus_{\al \in R(U_5) \cap w(R^+(B))} \g_\al$, where $R(U_5) = \{-3\al - \be, -\al, \be, \al + \be, 3\al + 2\be\}$, so that $\u_5 = \bigoplus_{\al \in R(U_5)} \g_\al$. 

\begin{lem} \label{ro 1}
	For any $w \in W$ the corresponding Bruhat cell $C(w)$ is either a single $U_5 \rtimes T'$-orbit or a disjoint union of two $U_5 \rtimes T'$-orbits. Furthermore, $\Lie \Stab_{U_5}(\dot w B) = \o{\u}_w$.
\end{lem}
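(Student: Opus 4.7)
The plan is to exploit that $C(w)$ is a single $U_6$-orbit with $U_6$-stabilizer $U_6^w := U_6 \cap \dot w B \dot w^{-1}$, a connected unipotent subgroup whose Lie algebra is $\bigoplus_{\al \in R^+(B)\cap w(R^+(B))}\g_\al$. Thus $(U_5 \rtimes T')$-orbits on $C(w)$ correspond bijectively to double cosets $(U_5 \rtimes T') \backslash U_6 / U_6^w$, and the second assertion of the lemma is immediate: $\Stab_{U_5}(\dot w B) = U_5 \cap U_6^w$, and since all groups involved are connected unipotent over a field of characteristic zero, the Lie algebra of the intersection equals the intersection of Lie algebras, namely $\u_5 \cap \bigoplus_{\al \in R^+(B) \cap w(R^+(B))}\g_\al = \bigoplus_{\al \in R(U_5)\cap w(R^+(B))}\g_\al = \o{\u}_w$.

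To count the double cosets, I would first check that $U_5$ is normal in $U_6$. Since both are connected, it suffices to show $\u_5$ is an ideal of $\u_6$, and since $R(U_6) \smallsetminus R(U_5) = \{2\al+\be\}$ (the weight of $f_3$), this reduces to verifying that $(2\al+\be)+\gamma \in R(U_5)$ whenever $(2\al+\be)+\gamma$ is a root, for every $\gamma \in R(U_5)$. A quick inspection of Figure~\ref{1} confirms this: the relevant sums either fail to be roots (for $\gamma = \be$ and $\gamma = 3\al+2\be$) or equal $-\al$, $\al+\be$, $3\al+2\be$, all lying in $R(U_5)$. Consequently $U_6/U_5 \cong \bG_a$, identified with the root subgroup $U_{2\al+\be}$.

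Because $U_5$ is normal in $U_6$ and $T'$ normalizes $U_5$, quotienting by $U_5$ on the left reduces the double-coset count to $T' \backslash (U_6/U_5)/\pi(U_6^w)$, where $\pi \colon U_6 \twoheadrightarrow U_6/U_5 \cong \bG_a$. The image $\pi(U_6^w)$ is either all of $\bG_a$ (iff $2\al+\be \in R(U_6^w)$, i.e.~$w^{-1}(2\al+\be) \in R^+(B)$) or trivial. In the first case there is a single orbit. In the second case the count reduces to $T'$-orbits on $\bG_a$ via the character $(2\al+\be)|_{T'}$; a short calculation using that $\Lie T'$ is proportional to the coroot of the long root $3\al+2\be$ shows this character equals $\pm 1$, so $T'$ has exactly two orbits $\{0\}$ and $\bG_a \smallsetminus \{0\}$, yielding two $(U_5 \rtimes T')$-orbits on $C(w)$.

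The main obstacle, modest though it is, is confirming the nontriviality of the character $(2\al+\be)|_{T'}$; this is an elementary computation in the $G_2$ root system using the explicit description of $\ft$ from \S\ref{ap g2}, but it is the one place where the particular shape of $T'$ enters.
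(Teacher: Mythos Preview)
Your argument is correct and is essentially the same as the paper's, only phrased dually: the paper parameterizes $C(w)$ by the section $U_{w^{-1}}\hookrightarrow U_6$ and studies $U_5'=U_5\cap U_{w^{-1}}$ inside it, whereas you parameterize $C(w)$ by the quotient $U_6/U_6^w$ and exploit the normality $U_5\lhd U_6$ to pass to $U_6/U_5\cong\bG_a$. Both routes land on the identical dichotomy (whether $2\al+\be\in w(R^+(B))$) and the identical endgame (the nontrivial $T'$-weight on $\g_{2\al+\be}$), and your computation that $(2\al+\be)|_{T'}$ pairs nontrivially with $E_{22}-E_{33}$ matches the paper's one-line observation that ``$U_\ga$ has a non-zero $T'$-weight.'' One small point worth making explicit in your write-up: the $T'$-action on $U_6/U_6^w$ is by conjugation (since $T'$ fixes $\dot wB$), which is why it descends through $\pi$ and acts on $U_6/U_5\cong U_{2\al+\be}$ by the root character; you use this implicitly but never say it.
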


\begin{proof}
	Since $u \dot w B = \dot w \cd w^{-1}(u)B$, $u \in \Stab_{U_5}(\dot w B)$ iff $w^{-1}(u) \in B$, what clearly implies $\Lie \Stab_{U_5}(\dot w B) = \o{\u}_w$.
	
	Next, consider the Bruhat cell $C(w) = U_{w^{-1}} \dot w B$. As follows from the proof of \cite[Lemma 8.3.6]{s} and the structure theory of $T$-invariant unipotent subgroups in $U_6 = B_u$, for any $T$-stable connected unipotent subgroup $U \sb U_6$ the $U$-orbits in $C(w)$ coincide with $(U \cap U_{w^{-1}})$-orbits in $C(w)$. Set $\u_5' = \u_5 \cap \Lie U_{w^{-1}}$, and let $U_5'$ be the corresponding closed connected unipotent subgroup in $G$ (it exists by \S\ref{ap sub}). Then $U_5' = U_5 \cap  U_{w^{-1}}$ and the $U_5$-orbits in $C(w)$ coincide with $U_5'$-orbits in $C(w)$. There are two possible cases.
	
	\textsc{Case 1}: $U_5' = U_{w^{-1}}$. In this case $C(w)$ is a single $U_5$-orbit.
	
	\textsc{Case 2}: $U_5' \sbn U_{w^{-1}}$. Then the $U_5' \rtimes T'$-orbit through the
        point $\dot w$ cannot be the whole $C(w)$, because $\dot w$ is a $T'$-fixed point and
        $C(w)$ is not a single $U_5'$-orbit. Since $\u_5$ and $\u_6$ differ only in one weight subspace
        $\g_\ga$ ($\ga = 2\al + \be$), $\Lie U_{w^{-1}} = \u_5' \oplus \g_\ga$, so
        $U_{w^{-1}} \simeq U_\ga \ltimes U_5'$. Here the product is semi-direct, because
        $(U_5, U_\ga) \sb U_5$ and $U_5' \sb U_5$, so $U_5'$ is a normal subgroup in $U_{w^{-1}}$. Thus
        any subset $\{u v \dot w B : u \in U_\ga, v \in U_5\}$ is a single $U_5'$-orbit in $C(w)$,
        and we get an infinite set of $U_5'$-orbits indexed by $U_\ga$: the $U_5'$-orbit corresponding
        to $u \in U_\ga$ equals $\{u v \dot w B : v \in U_5'\}$. The torus $T'$ acts on the set of
        $U_5'$-orbits in $C(w)$, so acts on $U_\ga$ algebraically. Since $U_\ga$ has a non-zero
        $T'$-weight, $T'$ cannot stabilize any of the $U_5'$-orbits $\{u v \dot w B : v \in U_5'\}$ for $u \ne 1$. It means that the $T'$-action on $U_\ga \simeq \bA^1$ has only one fixed point (the neutral
        element), so other points lie in a single orbit. Therefore, the complement of $U_5' \dot w B$ in $C(w)$ is a single $U_5 \rtimes T'$-orbit.
\end{proof}

Note that $\o{\u}_w$ is $T$-invariant, hence the $U_5 \rtimes T'$-orbit through $\dot w B$ is
relevant iff $f_1 \notin \o{\u}_w$. This can be seen in~Figure~\ref{1}. 
It is well known (see \cite[Corollary 6.4.12]{s}) that the set of all Borel subgroups in $G$ containing $T$ is
in bijection with $W$ via the following map: $w \in W$ corresponds to $\dot w B \dot w^{-1}$.
Furthermore, $R^+ (\dot w B \dot w^{-1}) = w(R^+(B))$. Also it is well known that Borel subgroups in
$G$ containing $T$ are in one-to-one correspondence with positive root systems in
$R(G)$~\cite[\S8.2]{s}.

We are interested in all $\al \in R(U_5) \cap w(R^+(B)) = R(U_5) \cap R^+(\dot w B \dot w^{-1})$, then the direct sum of the corresponding weight subspaces will be equal to $\o{\u}_w$. So the weights of $\o{\u}_w$ are exactly
all the dotted/dashed weights in Figure \ref{1} lying in $w(R^+(B))$. Since $\psi |_\ft = \psi |_{\ft'} = 0$, it is enough to
check $\psi |_{\o{\u}_w} = 0$ in order to guarantee that the $U_5 \rtimes T'$-orbit through
$\dot w B$ is relevant. Thus we need to find all $w \in W$ such that $(-\al) \notin w(R^+(B))$, because $(-\al)$ is the only dotted/dashed weight such that $\psi$ does not vanish on the corresponding weight subspace. Since $w(R^+(B))$ runs through the set of polarizations of $R(B_2)$ as $w$ runs through $W$, we need to find all polarizations not containing $(-\al)$. There are~$6$
such polarizations. An easy way to obtain this number is to notice that for any two opposite
polarizations exactly one of them contains $(-\al)$, and the number of all polarizations equals $12$.

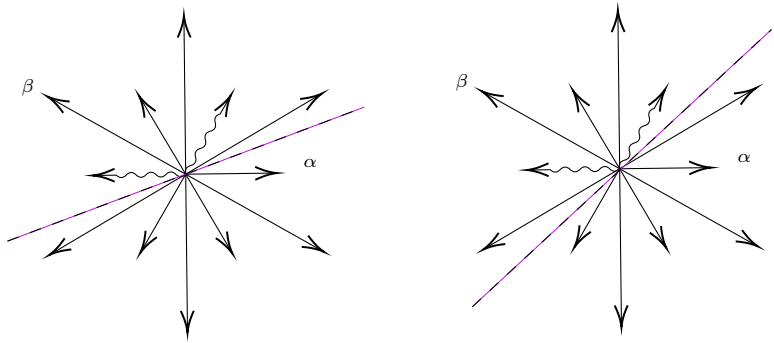
\begin{figure}[h]
	\[\begin{tikzpicture}[x=0.75pt,y=0.75pt,yscale=-1,xscale=1]
		%uncomment if require: \path (0,300); %set diagram left start at 0, and has height of 300
		
		%Straight Lines [id:da02088374338972221] 
		\draw [color={rgb, 255:red, 0; green, 0; blue, 0 }  ,draw opacity=1 ]   (410.05,142.49) -- (477.97,180.69) ;
		\draw [shift={(479.71,181.67)}, rotate = 209.35] [color={rgb, 255:red, 0; green, 0; blue, 0 }  ,draw opacity=1 ][line width=0.75]    (10.93,-3.29) .. controls (6.95,-1.4) and (3.31,-0.3) .. (0,0) .. controls (3.31,0.3) and (6.95,1.4) .. (10.93,3.29)   ;
		%Straight Lines [id:da16670099455349652] 
		\draw [color={rgb, 255:red, 0; green, 0; blue, 0 }  ,draw opacity=1 ]   (410.05,142.49) -- (477.09,102.76) ;
		\draw [shift={(478.81,101.74)}, rotate = 149.35] [color={rgb, 255:red, 0; green, 0; blue, 0 }  ,draw opacity=1 ][line width=0.75]    (10.93,-3.29) .. controls (6.95,-1.4) and (3.31,-0.3) .. (0,0) .. controls (3.31,0.3) and (6.95,1.4) .. (10.93,3.29)   ;
		%Straight Lines [id:da4773952168230611] 
		\draw [color={rgb, 255:red, 0; green, 0; blue, 0 }  ,draw opacity=1 ]   (410.05,142.49) -- (409.16,64.57) ;
		\draw [shift={(409.14,62.57)}, rotate = 89.35] [color={rgb, 255:red, 0; green, 0; blue, 0 }  ,draw opacity=1 ][line width=0.75]    (10.93,-3.29) .. controls (6.95,-1.4) and (3.31,-0.3) .. (0,0) .. controls (3.31,0.3) and (6.95,1.4) .. (10.93,3.29)   ;
		%Straight Lines [id:da9576363143241275] 
		\draw [color={rgb, 255:red, 0; green, 0; blue, 0 }  ,draw opacity=1 ]   (410.05,142.49) -- (410.93,220.41) ;
		\draw [shift={(410.95,222.41)}, rotate = 269.35] [color={rgb, 255:red, 0; green, 0; blue, 0 }  ,draw opacity=1 ][line width=0.75]    (10.93,-3.29) .. controls (6.95,-1.4) and (3.31,-0.3) .. (0,0) .. controls (3.31,0.3) and (6.95,1.4) .. (10.93,3.29)   ;
		%Straight Lines [id:da5533776295985313] 
		\draw [color={rgb, 255:red, 0; green, 0; blue, 0 }  ,draw opacity=1 ]   (410.05,142.49) -- (343.01,182.22) ;
		\draw [shift={(341.29,183.24)}, rotate = 329.35] [color={rgb, 255:red, 0; green, 0; blue, 0 }  ,draw opacity=1 ][line width=0.75]    (10.93,-3.29) .. controls (6.95,-1.4) and (3.31,-0.3) .. (0,0) .. controls (3.31,0.3) and (6.95,1.4) .. (10.93,3.29)   ;
		%Straight Lines [id:da6277476363932422] 
		\draw [color={rgb, 255:red, 0; green, 0; blue, 0 }  ,draw opacity=1 ]   (342.12,104.3) -- (410.05,142.49) ;
		\draw [shift={(340.38,103.31)}, rotate = 29.35] [color={rgb, 255:red, 0; green, 0; blue, 0 }  ,draw opacity=1 ][line width=0.75]    (10.93,-3.29) .. controls (6.95,-1.4) and (3.31,-0.3) .. (0,0) .. controls (3.31,0.3) and (6.95,1.4) .. (10.93,3.29)   ;
		%Straight Lines [id:da6140681133506294] 
		\draw [color={rgb, 255:red, 0; green, 0; blue, 0 }  ,draw opacity=1 ]   (387.52,104.65) -- (410.05,142.49) ;
		\draw [shift={(386.49,102.94)}, rotate = 59.23] [color={rgb, 255:red, 0; green, 0; blue, 0 }  ,draw opacity=1 ][line width=0.75]    (10.93,-3.29) .. controls (6.95,-1.4) and (3.31,-0.3) .. (0,0) .. controls (3.31,0.3) and (6.95,1.4) .. (10.93,3.29)   ;
		%Straight Lines [id:da8143881475482708] 
		\draw [color={rgb, 255:red, 0; green, 0; blue, 0 }  ,draw opacity=1 ]   (410.05,142.49) .. controls (409.43,140.22) and (410.26,138.77) .. (412.53,138.15) .. controls (414.8,137.53) and (415.63,136.08) .. (415.01,133.81) .. controls (414.39,131.54) and (415.22,130.09) .. (417.49,129.47) .. controls (419.76,128.85) and (420.59,127.4) .. (419.97,125.13) .. controls (419.35,122.86) and (420.18,121.41) .. (422.45,120.79) .. controls (424.72,120.17) and (425.55,118.72) .. (424.93,116.45) .. controls (424.32,114.17) and (425.15,112.72) .. (427.42,112.1) -- (427.93,111.21) -- (431.9,104.26) ;
		\draw [shift={(432.89,102.52)}, rotate = 119.75] [color={rgb, 255:red, 0; green, 0; blue, 0 }  ,draw opacity=1 ][line width=0.75]    (10.93,-3.29) .. controls (6.95,-1.4) and (3.31,-0.3) .. (0,0) .. controls (3.31,0.3) and (6.95,1.4) .. (10.93,3.29)   ;
		%Straight Lines [id:da23694103298499258] 
		\draw [color={rgb, 255:red, 0; green, 0; blue, 0 }  ,draw opacity=1 ]   (410.05,142.49) -- (454.04,141.94) ;
		\draw [shift={(456.04,141.92)}, rotate = 179.29] [color={rgb, 255:red, 0; green, 0; blue, 0 }  ,draw opacity=1 ][line width=0.75]    (10.93,-3.29) .. controls (6.95,-1.4) and (3.31,-0.3) .. (0,0) .. controls (3.31,0.3) and (6.95,1.4) .. (10.93,3.29)   ;
		%Straight Lines [id:da6342687972584269] 
		\draw [color={rgb, 255:red, 0; green, 0; blue, 0 }  ,draw opacity=1 ]   (410.05,142.49) .. controls (408.4,144.18) and (406.74,144.2) .. (405.05,142.56) .. controls (403.36,140.92) and (401.7,140.94) .. (400.05,142.63) .. controls (398.4,144.32) and (396.74,144.34) .. (395.05,142.7) .. controls (393.36,141.06) and (391.7,141.08) .. (390.05,142.77) .. controls (388.4,144.46) and (386.74,144.48) .. (385.05,142.84) .. controls (383.36,141.2) and (381.7,141.22) .. (380.05,142.91) .. controls (378.4,144.6) and (376.74,144.62) .. (375.05,142.98) -- (373.25,143) -- (365.25,143.12) ;
		\draw [shift={(363.25,143.14)}, rotate = 359.2] [color={rgb, 255:red, 0; green, 0; blue, 0 }  ,draw opacity=1 ][line width=0.75]    (10.93,-3.29) .. controls (6.95,-1.4) and (3.31,-0.3) .. (0,0) .. controls (3.31,0.3) and (6.95,1.4) .. (10.93,3.29)   ;
		%Straight Lines [id:da23632820065373328] 
		\draw [color={rgb, 255:red, 0; green, 0; blue, 0 }  ,draw opacity=1 ]   (410.05,142.49) -- (432.59,180.8) ;
		\draw [shift={(433.6,182.52)}, rotate = 239.52] [color={rgb, 255:red, 0; green, 0; blue, 0 }  ,draw opacity=1 ][line width=0.75]    (10.93,-3.29) .. controls (6.95,-1.4) and (3.31,-0.3) .. (0,0) .. controls (3.31,0.3) and (6.95,1.4) .. (10.93,3.29)   ;
		%Straight Lines [id:da1654027349012197] 
		\draw [color={rgb, 255:red, 0; green, 0; blue, 0 }  ,draw opacity=1 ]   (410.05,142.49) -- (388.2,180.4) ;
		\draw [shift={(387.2,182.13)}, rotate = 299.96] [color={rgb, 255:red, 0; green, 0; blue, 0 }  ,draw opacity=1 ][line width=0.75]    (10.93,-3.29) .. controls (6.95,-1.4) and (3.31,-0.3) .. (0,0) .. controls (3.31,0.3) and (6.95,1.4) .. (10.93,3.29)   ;
		%Straight Lines [id:da6641889497979476] 
		\draw [color={rgb, 255:red, 0; green, 0; blue, 0 }  ,draw opacity=1 ][fill={rgb, 255:red, 189; green, 16; blue, 224 }  ,fill opacity=1 ] [dash pattern={on 4.5pt off 4.5pt}]  (335.7,212.19) -- (486.62,72.21) ;
		%Straight Lines [id:da34290586020405356] 
		\draw [color={rgb, 255:red, 0; green, 0; blue, 0 }  ,draw opacity=1 ]   (191.18,145.24) -- (259.11,183.43) ;
		\draw [shift={(260.85,184.41)}, rotate = 209.35] [color={rgb, 255:red, 0; green, 0; blue, 0 }  ,draw opacity=1 ][line width=0.75]    (10.93,-3.29) .. controls (6.95,-1.4) and (3.31,-0.3) .. (0,0) .. controls (3.31,0.3) and (6.95,1.4) .. (10.93,3.29)   ;
		%Straight Lines [id:da5803356661062575] 
		\draw [color={rgb, 255:red, 0; green, 0; blue, 0 }  ,draw opacity=1 ]   (191.18,145.24) -- (258.22,105.51) ;
		\draw [shift={(259.94,104.49)}, rotate = 149.35] [color={rgb, 255:red, 0; green, 0; blue, 0 }  ,draw opacity=1 ][line width=0.75]    (10.93,-3.29) .. controls (6.95,-1.4) and (3.31,-0.3) .. (0,0) .. controls (3.31,0.3) and (6.95,1.4) .. (10.93,3.29)   ;
		%Straight Lines [id:da5237964714463776] 
		\draw [color={rgb, 255:red, 0; green, 0; blue, 0 }  ,draw opacity=1 ]   (191.18,145.24) -- (190.3,67.32) ;
		\draw [shift={(190.28,65.32)}, rotate = 89.35] [color={rgb, 255:red, 0; green, 0; blue, 0 }  ,draw opacity=1 ][line width=0.75]    (10.93,-3.29) .. controls (6.95,-1.4) and (3.31,-0.3) .. (0,0) .. controls (3.31,0.3) and (6.95,1.4) .. (10.93,3.29)   ;
		%Straight Lines [id:da8799630346514877] 
		\draw [color={rgb, 255:red, 0; green, 0; blue, 0 }  ,draw opacity=1 ]   (191.18,145.24) -- (192.07,223.16) ;
		\draw [shift={(192.09,225.16)}, rotate = 269.35] [color={rgb, 255:red, 0; green, 0; blue, 0 }  ,draw opacity=1 ][line width=0.75]    (10.93,-3.29) .. controls (6.95,-1.4) and (3.31,-0.3) .. (0,0) .. controls (3.31,0.3) and (6.95,1.4) .. (10.93,3.29)   ;
		%Straight Lines [id:da8159660956331689] 
		\draw [color={rgb, 255:red, 0; green, 0; blue, 0 }  ,draw opacity=1 ]   (191.18,145.24) -- (124.14,184.97) ;
		\draw [shift={(122.42,185.98)}, rotate = 329.35] [color={rgb, 255:red, 0; green, 0; blue, 0 }  ,draw opacity=1 ][line width=0.75]    (10.93,-3.29) .. controls (6.95,-1.4) and (3.31,-0.3) .. (0,0) .. controls (3.31,0.3) and (6.95,1.4) .. (10.93,3.29)   ;
		%Straight Lines [id:da17111183443104738] 
		\draw [color={rgb, 255:red, 0; green, 0; blue, 0 }  ,draw opacity=1 ]   (123.26,107.04) -- (191.18,145.24) ;
		\draw [shift={(121.51,106.06)}, rotate = 29.35] [color={rgb, 255:red, 0; green, 0; blue, 0 }  ,draw opacity=1 ][line width=0.75]    (10.93,-3.29) .. controls (6.95,-1.4) and (3.31,-0.3) .. (0,0) .. controls (3.31,0.3) and (6.95,1.4) .. (10.93,3.29)   ;
		%Straight Lines [id:da49139744363969684] 
		\draw [color={rgb, 255:red, 0; green, 0; blue, 0 }  ,draw opacity=1 ]   (168.65,107.4) -- (191.18,145.24) ;
		\draw [shift={(167.63,105.68)}, rotate = 59.23] [color={rgb, 255:red, 0; green, 0; blue, 0 }  ,draw opacity=1 ][line width=0.75]    (10.93,-3.29) .. controls (6.95,-1.4) and (3.31,-0.3) .. (0,0) .. controls (3.31,0.3) and (6.95,1.4) .. (10.93,3.29)   ;
		%Straight Lines [id:da8091009376236333] 
		\draw [color={rgb, 255:red, 0; green, 0; blue, 0 }  ,draw opacity=1 ]   (191.18,145.24) .. controls (190.56,142.97) and (191.39,141.52) .. (193.66,140.9) .. controls (195.93,140.28) and (196.76,138.83) .. (196.15,136.56) .. controls (195.53,134.29) and (196.36,132.84) .. (198.63,132.22) .. controls (200.9,131.6) and (201.73,130.15) .. (201.11,127.88) .. controls (200.49,125.61) and (201.32,124.16) .. (203.59,123.53) .. controls (205.86,122.91) and (206.69,121.46) .. (206.07,119.19) .. controls (205.45,116.92) and (206.28,115.47) .. (208.55,114.85) -- (209.07,113.95) -- (213.04,107.01) ;
		\draw [shift={(214.03,105.27)}, rotate = 119.75] [color={rgb, 255:red, 0; green, 0; blue, 0 }  ,draw opacity=1 ][line width=0.75]    (10.93,-3.29) .. controls (6.95,-1.4) and (3.31,-0.3) .. (0,0) .. controls (3.31,0.3) and (6.95,1.4) .. (10.93,3.29)   ;
		%Straight Lines [id:da9353967148844458] 
		\draw [color={rgb, 255:red, 0; green, 0; blue, 0 }  ,draw opacity=1 ]   (191.18,145.24) -- (235.18,144.69) ;
		\draw [shift={(237.18,144.67)}, rotate = 179.29] [color={rgb, 255:red, 0; green, 0; blue, 0 }  ,draw opacity=1 ][line width=0.75]    (10.93,-3.29) .. controls (6.95,-1.4) and (3.31,-0.3) .. (0,0) .. controls (3.31,0.3) and (6.95,1.4) .. (10.93,3.29)   ;
		%Straight Lines [id:da15991974279954158] 
		\draw [color={rgb, 255:red, 0; green, 0; blue, 0 }  ,draw opacity=1 ]   (191.18,145.24) .. controls (189.54,146.93) and (187.87,146.95) .. (186.18,145.31) .. controls (184.49,143.67) and (182.83,143.69) .. (181.18,145.38) .. controls (179.53,147.07) and (177.87,147.09) .. (176.18,145.45) .. controls (174.49,143.81) and (172.83,143.83) .. (171.18,145.52) .. controls (169.54,147.21) and (167.88,147.23) .. (166.19,145.59) .. controls (164.5,143.95) and (162.84,143.97) .. (161.19,145.66) .. controls (159.54,147.35) and (157.88,147.37) .. (156.19,145.73) -- (154.39,145.75) -- (146.39,145.86) ;
		\draw [shift={(144.39,145.89)}, rotate = 359.2] [color={rgb, 255:red, 0; green, 0; blue, 0 }  ,draw opacity=1 ][line width=0.75]    (10.93,-3.29) .. controls (6.95,-1.4) and (3.31,-0.3) .. (0,0) .. controls (3.31,0.3) and (6.95,1.4) .. (10.93,3.29)   ;
		%Straight Lines [id:da21810840830443468] 
		\draw [color={rgb, 255:red, 0; green, 0; blue, 0 }  ,draw opacity=1 ]   (191.18,145.24) -- (213.73,183.54) ;
		\draw [shift={(214.74,185.27)}, rotate = 239.52] [color={rgb, 255:red, 0; green, 0; blue, 0 }  ,draw opacity=1 ][line width=0.75]    (10.93,-3.29) .. controls (6.95,-1.4) and (3.31,-0.3) .. (0,0) .. controls (3.31,0.3) and (6.95,1.4) .. (10.93,3.29)   ;
		%Straight Lines [id:da2804053243850777] 
		\draw [color={rgb, 255:red, 0; green, 0; blue, 0 }  ,draw opacity=1 ]   (191.18,145.24) -- (169.33,183.15) ;
		\draw [shift={(168.33,184.88)}, rotate = 299.96] [color={rgb, 255:red, 0; green, 0; blue, 0 }  ,draw opacity=1 ][line width=0.75]    (10.93,-3.29) .. controls (6.95,-1.4) and (3.31,-0.3) .. (0,0) .. controls (3.31,0.3) and (6.95,1.4) .. (10.93,3.29)   ;
		%Straight Lines [id:da45866060511553464] 
		\draw [color={rgb, 255:red, 0; green, 0; blue, 0 }  ,draw opacity=1 ][fill={rgb, 255:red, 189; green, 16; blue, 224 }  ,fill opacity=1 ] [dash pattern={on 4.5pt off 4.5pt}]  (101.28,179.04) -- (281.09,111.44) ;
		
		% Text Node
		\draw (468.21,133.81) node [anchor=north west][inner sep=0.75pt]  [font=\scriptsize,rotate=-359.49]  {$\alpha $};
		% Text Node
		\draw (325.85,93.28) node [anchor=north west][inner sep=0.75pt]  [font=\scriptsize,rotate=-359.49]  {$\beta $};
		% Text Node
		\draw (249.35,136.56) node [anchor=north west][inner sep=0.75pt]  [font=\scriptsize,rotate=-359.49]  {$\alpha $};
		% Text Node
		\draw (106.98,96.02) node [anchor=north west][inner sep=0.75pt]  [font=\scriptsize,rotate=-359.49]  {$\beta $};

	\end{tikzpicture}\]
	
	\caption{Possible polarizations for $(-\al), \ga \notin R^+(\dot w B \dot w^{-1})$}
	\label{2}
\end{figure}

Finally, let us find all relevant orbits among the complements to $U_5 \dot w B$ in $C(w)$, where
$w \in W$. First we need the existence of such complement. It is equivalent to $U_\ga \sb U_{w^{-1}}$
($\ga = 2\al + \be$), i.e. $\ga \in w(-R^+(B)) \Leftrightarrow \ga \notin w(R^+(B))$. Also as in the previous paragraph, it is enough
to check $\psi |_{\Lie \Stab_{U_5}(u \dot w B)} = \psi |_{\Ad_u(\o{\u}_w)} = 0$ for some (and hence for any)
$u \in U_\ga$, $u \ne 1$. Considering the limit $u \to 1$ we claim that if the complement is relevant,
then the $U_5 \rtimes T'$-orbit through $\dot w B$ is also relevant. Hence we may consider
only the complements to relevant orbits, which means that $(-\al)$ is not in the system of positive roots corresponding to
$\dot w B \dot w^{-1}$.

So we get two possible polarizations (wavy weights $(-\al)$ and $\ga$ should not lie in the system of positive roots, see Figure \ref{2}) given by the lower half-planes with respect to the dashed lines. For the left polarization $\o{\u}_w = \g_{-3\al - \be}$, for the right one $\o{\u}_w = 0$. Thus the right
polarization precisely gives a relevant orbit. For the left polarization we need to check
$\psi|_{\Ad_u(\g_{-3\al - \be})} = 0$ for any $u \in U_\ga$. If this is so, then taking the differential of
$\Ad_u$ at $u = 1$, we get $\psi|_{[x, y]} = 0$ for any $x \in \g_\ga, y \in \g_{-3\al-\be}$. But
$[\g_\ga, \g_{-3\al-\be}] = \g_{\ga + (-3\al-\be)} = \g_{-\al}$ and $\psi$ does not vanish on it. So the
"complementary" orbit corresponding to the left polarization is not relevant. Thus we get only one relevant "complementary" orbit.

Therefore, we get $7$ relevant orbits in total and complete the proof of the first part of Theorem \ref{theo}. 

%----------------------------------------------------------------------------------------

%\newpage
\section{Linear side} \label{ls}

\subsection{Relation with conormal bundles to relevant orbits} \label{ls rel}

By \cite[Theorem 1.5.7]{cg}, the variety $\La_{\scrX^\vee}$ is coisotropic. We view $\t{G}^\vee$ as a subgroup of $\Sp_{14} \sb \GL_{14} = \GL(\scrX^\vee)$. The symplectic form $\om$ on $\C^7 \ot \C^2$ is given by a tensor product of a $G_2$-invariant non-degenerate symmetric bilinear form $\<\cd, \cd\>$ on $\C^7$ (see Lemma~\ref{ap 7rep 3}) and an $\SL_2$-invariant symplectic form $\om_2$ on $\C^2$.

Let $l \sb \C^2$ be a line. Choose a Borel subgroup $\t{B}^\vee$ of the form $B \times B_2$, where
$B_2 \sb \SL_2$ is a Borel subgroup preserving $l$, and $B \sb G_2$ is a Borel subgroup.
We can decompose $\C^2 = l \oplus l'$, $l = \Span_\C(e_1)$, $l' = \Span_\C(e_2)$, $\om_2(e_1, e_2) = 1$,
and take $B_2$ equal to the subgroup of upper-triangular matrices in the base $(e_1, e_2)$. Set
$L = \C^7 \ot l$, $L' = \C^7 \ot l'$ and $V = \scrX^\vee/L$. There is a natural $\t{B}^\vee$-action on
$V$. For any $x \in \scrX^\vee$ denote by $[x]$ its image under the natural projection
$\scrX^\vee \to V$. It is easy to see from explicit formula for $\om$ that $L$ is Lagrangian subspace
in $\scrX^\vee$. So $\om$ gives rise to a natural isomorphism $\Xi \colon V^\ast \iso \C^7 \ot L$,
i.e.~$x \in L^\ast$ corresponds to $[y] \mapsto \om(x, y)$. Note that a (co)tangent space to $V$ at
any point can be identified with $V$ (resp.~$V^\ast \simeq L$), so that $T^\ast V = V \oplus L$.

\begin{lem} \label{ls rel 1}
  The correspondence $\Phi \colon (x', x) \mapsto ([x'], x)$ is a $B$-equivariant isomorphism
  of symplectic varieties $\Phi \colon \scrX^\vee = L' \oplus L \iso T^\ast V = V \oplus L$,
  where the $B$-action on $T^\ast V$ comes from the $B$-action on $V$.
\end{lem}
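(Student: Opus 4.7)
The plan is to verify three points in order: (a) $\Phi$ is a well-defined isomorphism of algebraic varieties, (b) $\Phi$ intertwines the $B$-actions on both sides, and (c) $\Phi$ pulls the canonical symplectic form on $T^\ast V$ back to $\om|_{\scrX^\vee}$. Point (a) is immediate: since $\scrX^\vee = L \oplus L'$ as vector spaces, the composition $L' \inj \scrX^\vee \sur V = \scrX^\vee/L$ is a linear isomorphism, so $\Phi \colon L' \oplus L \to V \oplus L$ is a linear isomorphism, in particular a morphism of algebraic varieties.

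For (b), note that $B \sb G_2$ acts on $\C^2$ trivially, hence preserves both summands in $\C^7 \ot \C^2 = L' \oplus L$. Thus the decomposition is $B$-stable and the $B$-action on $V$ (induced via the projection $\scrX^\vee \sur V$) matches the $B$-action on $L'$. The only subtle point is that the $B$-action on $L$ inherited from $\scrX^\vee$ coincides with the cotangent $B$-action on $V^\ast \simeq L$. This follows from the fact that $B \sb G_2 \sb \t{G}^\vee$ preserves $\om$: for $b \in B$, $x \in L$ and $[y] \in V$,
\[((b^{-1})^\ast \Xi(x))([y]) = \Xi(x)([b^{-1}y]) = \om(x, b^{-1}y) = \om(bx, y) = \Xi(bx)([y]),\]
so $(b^{-1})^\ast \circ \Xi = \Xi \circ b$, as required.

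For (c), write $L = \C^7 \ot e_1$, $L' = \C^7 \ot e_2$, so elements of $\scrX^\vee$ have the form $v_1 \ot e_1 + v'_1 \ot e_2$ with $v_1, v'_1 \in \C^7$. From $\om = \<\cd, \cd\> \ot \om_2$ and $\om_2(e_1, e_1) = \om_2(e_2, e_2) = 0$, $\om_2(e_1, e_2) = 1$, a direct computation gives
\[\om(v_1 \ot e_1 + v'_1 \ot e_2, v_2 \ot e_1 + v'_2 \ot e_2) = \<v_1, v'_2\> - \<v'_1, v_2\>.\]
On the other hand, the canonical $2$-form on $T^\ast V$ satisfies $\om_{T^\ast V}(([x'_1], \a_1), ([x'_2], \a_2)) = \a_1([x'_2]) - \a_2([x'_1])$ under the identification $T^\ast V = V \op V^\ast$; substituting $\a_i = \Xi(x_i)$ and using $\Xi(v_i \ot e_1)([v'_j \ot e_2]) = \om(v_i \ot e_1, v'_j \ot e_2) = \<v_i, v'_j\>$ yields the same expression $\<v_1, v'_2\> - \<v_2, v'_1\>$. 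Hence $\Phi^\ast \om_{T^\ast V} = \om|_{\scrX^\vee}$.

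The verifications are elementary linear-algebraic bookkeeping; the only conceptually nontrivial ingredient is the identification of the natural $B$-action on $L$ with the cotangent action on $V^\ast$, which requires invoking $G_2$-invariance of $\<\cd, \cd\>$ (equivalently, that $\t{G}^\vee$ preserves $\om$).
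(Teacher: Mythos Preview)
Your proof is correct and follows essentially the same approach as the paper: both verify that $\Phi$ is a linear isomorphism, compute the symplectic form explicitly via $\om = \<\cd,\cd\>\ot\om_2$ to check it matches the canonical form on $T^\ast V$, and use $G_2$-invariance of $\<\cd,\cd\>$ (equivalently, that $B$ preserves $\om$) to identify the cotangent $B$-action on $V^\ast$ with the natural $B$-action on $L$. The only difference is cosmetic ordering---you do equivariance before the symplectic check, the paper does the reverse.
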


\begin{proof}
  Clearly $\Phi$ is an isomorphism in the category of vector spaces. The symplectic form $\Om$
  on $T^\ast V$ equals $\Om(([y_1], x_1), ([y_2], x_2)) = \om(x_1, y_2) - \om(x_2, y_1)$, where
  $y_i \in \scrX^\vee$, $x_i \in L$. Let us show that $\Phi$ respects the symplectic structure. Take any
  two vectors $(y_1, x_1), (y_2, x_2) \in \scrX^\vee = L' \oplus L$. They map to
  $([y_1], x_1), ([y_2], x_2)$, so the value of $\Om$ on them equals $\om(x_1, y_2) - \om(x_2, y_1)$.
  To compute the value of symplectic form $\om$ on the pair $(y_1, x_1), (y_2, x_2)$, we write $y_i = y_i' \ot e_2$, $x_i = x_i' \ot e_1$, where $y_i', x_i' \in \C^7$. So $\om((y_1, x_1), (y_2, x_2)) = \om(y_1' \ot e_2 + x_1' \ot e_1, y_2' \ot e_2 + x_2' \ot e_1) = \<x_1', y_2'\> - \<x_2', y_1'\> = \om(x_1, y_2) - \om(x_2, y_1)$ by explicit formula for $\om$, given in the beginning of \S\ref{ls}.
	
  Finally, it remains to show that $\Phi$ is $B$-equivariant. Indeed, take any element
  $g \in B$. Let $g$ act by an operator $A$ on $\C^7$. Take any $(x', x) \in L' \oplus L = \scrX^\vee$.
  It corresponds to $([x'], x)$ under $\Phi$. If we first act by $(A, \Id)$ on $(x', x)$ and then
  apply $\Phi$, we get $([Ax'], Ax) \in T^\ast V = V \oplus L$. But $A$ acts on $V$ as follows:
  $[x] \mapsto [Ax]$. So the induced action on $T^\ast V$ is as follows:
  $([x], \a) \mapsto (A[x], (A^\ast)^{-1}(\a))$, where $\a \in T_{[x]}^\ast V \simeq V^\ast$. Since $A$
  preserves $\om$, the map $(A^\ast)^{-1}$ corresponds under $\Phi$ to the map $L \mapsto L$, $x \mapsto Ax$. So the $(A, \Id)$-action on $T^\ast V$ maps $([x'], x)$ to $([Ax'], Ax)$ and we are done.
\end{proof}

Since $V \simeq L'$ in a natural way, there is a $B$-invariant symmetric bilinear form on $V$,
defined as follows: $\<[x_1 \ot e_2], [x_2 \ot e_2]\> = \<x_1, x_2\>$. Set
$Q = \{x \in V : \<x, x\> = 0\}$. Note that $Q$ is $B$-invariant.

\begin{prop} \label{ls rel 2}
	The variety $\La_{\scrX^\vee}$ corresponds under $\Phi$ to the union of conormal bundles to $B$-orbits in $Q$.
\end{prop}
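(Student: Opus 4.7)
The plan is to compute the moment map $\mu_{\t{B}^\vee}$ explicitly on $\scrX^\vee = L' \oplus L$ and translate its zero level via the $B$-equivariant symplectomorphism $\Phi$ of Lemma~\ref{ls rel 1}. Splitting $\Lie \t{B}^\vee = \Lie B \oplus \Lie B_2$, the moment map decomposes accordingly, and I will treat the two factors separately.

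For the $B$-factor: since $\Phi$ is $B$-equivariant and the induced $B$-action on $T^*V$ is the cotangent lift of the natural $B$-action on $V$, the standard cotangent-bundle formula $\mu_B(x, \a)(\xi) = \a(\xi \cdot x)$ immediately gives $\mu_B = 0$ iff $\a \in N^*_{\bO_x, x}V$, where $\bO_x$ denotes the $B$-orbit through $x$. This is essentially the content of Corollary~\ref{tcb 12} applied directly to $T^*V$.

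For the $B_2$-factor, the action on $\scrX^\vee$ is not the cotangent lift of any action on $V$ (the unipotent generator $e_2 = E_{12}$ sends $L'$ into $L$), so its two Hamiltonians must be computed directly from the explicit formula for $\om$ given in the proof of Lemma~\ref{ls rel 1}. A short calculation will give $\mu_{h_2}(x, \a) = \a(x)$ for the torus generator $h_2 = E_{11} - E_{22}$, and $\mu_{e_2}(x, \a) = \f{1}{2}\<x, x\>$ for $e_2$. Hence $\mu_{B_2} = 0$ iff $x \in Q$ and $\a(x) = 0$.

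Combining the two pieces yields
\[\Phi(\La_{\scrX^\vee}) = \{(x, \a) \in T^*V : x \in Q,\ \a(x) = 0,\ \a \in N^*_{\bO_x, x}V\}.\]
What remains is to absorb the extra condition $\a(x) = 0$ into the conormal condition, i.e.~to check that every $B$-orbit $\bO \subset Q$ is conical; for then $x = \f{d}{dt}|_{t=1}(tx) \in T_x \bO$, so $\a \in N^*_{\bO, x}V$ forces $\a(x) = 0$ automatically. Conicality will follow from the observation that the scalar $\C^*$-action on $V$ commutes with the $B$-action and preserves $Q$, so it permutes the (finite) set of $B$-orbits on $Q$; since $\C^*$ is connected, each orbit must be $\C^*$-stable. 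Finiteness of the orbit set, the key input here, follows either from the forthcoming count in~\S\ref{ls count} or directly from the Bruhat decomposition of the homogeneous space $Q \sm \{0\} \cong G_2 / P_v$, where $P_v \subset G_2$ stabilizes a highest-weight vector of the $7$-dimensional representation. The main delicacy of the argument is this conicality step; once it is in place, the identification with $\bigcup_{\bO \subset Q} N^*_\bO V$ is automatic.
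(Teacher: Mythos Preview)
Your argument is correct and shares the paper's overall scaffolding: split $\Lie\t B^\vee=\Lie B\oplus\Lie B_2$, identify the $\Lie B$-vanishing with the conormal condition, and compute the two $\Lie B_2$-Hamiltonians by hand to extract the constraints $x\in Q$ and $\a(x)=0$. The genuine difference lies in how the stray condition $\a(x)=0$ is absorbed. You deduce it from the conormal condition by showing every $B$-orbit in $Q$ is conical, appealing to the finiteness established in~\S\ref{ls count}; this is precisely Lemma~\ref{ls count 5}, whose proof is independent of the present proposition, so the forward reference is harmless. The paper instead argues inside $\Lie B_2$: it perturbs to $A'=\e(E_{11}-E_{22})+E_{12}$, observes that for $\e\ne 0$ this is $B_2$-conjugate to a diagonal element, and invokes $B_2$-equivariance of the moment map. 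Your route is more transparent; the paper's is self-contained within~\S\ref{ls rel}, though its exposition of this step is somewhat garbled. A further minor streamlining in your version is the direct use of the $B$-equivariance of $\Phi$ (Lemma~\ref{ls rel 1}) to obtain the $\Lie B$-part in one stroke, whereas the paper computes the $\Lie B$-Hamiltonians on $\scrX^\vee$ and then converts its condition~(iii) into the conormal condition using $B$-invariance of~$\<\cdot,\cdot\>$.
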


\begin{proof}
  Let us give a criterion when $(z' \ot e_2, z \ot e_1) \in L' \oplus L = \scrX^\vee$ lies in
  $\La_{\scrX^\vee}$. Recall from~\cite[Proposition 1.4.6]{cg} that the Hamiltonian of $\t{B}^\vee$-action on $\scrX^\vee$ is given by a formula $H_A(v) = \f{1}{2} \om(v, Av)$ up to a sign, where $A \in \Lie \t{B}^\vee \sb \sp_{14}$. Since $\Lie \t{B}^\vee = \Lie B \oplus \Lie B_2$, one can divide the condition $\mu_{\t{B}^\vee} = 0$ into two parts: for $A \in \Lie B$ and for $A \in \Lie B_2$. Let $v = z' \ot e_2 + z \ot e_1$. Then for $A \in \Lie B$ we get $\om(z' \ot e_2 + z \ot e_1, Az' \ot e_2 + Az \ot e_1) = 0$, that is $\<z, Az'\> - \<z', Az\> = 0$. For $A \in \Lie B_2$ we consider two cases: $A = E_{11} - E_{22}$ and $A = E_{12}$ (in the basis $(e_1, e_2)$). In the first case we get the condition $\om(z' \ot e_2 + z \ot e_1, -z' \ot e_2 + z \ot e_1) = -2\<z, z'\> = 0$. In the second case we get the condition $\om(z' \ot e_2 + z \ot e_1, z' \ot e_1) = -\<z', z'\> = 0$. So the full list of conditions is as follows:
	\begin{itemize}
		\item[(i)] $\<z', z'\> = 0$ (so that $[z' \ot e_2] \in Q$).
		\item[(ii)] $\<z, z'\> = 0$.
		\item[(iii)] $\<Az, z'\> = \<z, Az'\>$ for any $A \in \Lie B$.
	\end{itemize}
	The tangent space to the $B$-orbit through the point $[z' \ot e_2] \in V$ consists of vectors $[Az' \ot e_2]$, where $A \in \Lie B$. In order to show that the point gets into the conormal bundle to the orbit, we need to check that the pairing between $Az' \ot e_2$ and $z \ot e_1$ is zero, i.e.~$\<Az', z\> = 0$. Since the $B$-action on $\C^7$ preserves $\<\cd, \cd\>$, for any $A \in \Lie B$ we have $\<Az, z'\> + \<z, Az'\> = 0$, so $\<Az, z'\> = \<z, Az'\>$ implies $\<Az', z\> = 0$ for any $A \in \Lie B$. So the image of $\La_{\scrX^\vee}$ lies in the union of conormal bundles to $B$-orbits.
	
	It remains to prove that the conormal bundle of any $B$-orbit in $Q$ lies in the image of $\La_{\scrX^\vee}$. Clearly, conditions (i) and (iii) hold for any point in a conormal bundle to a $B$-orbit in $Q$. Moreover, in (iii) both sides vanish. It remains to prove $\<z, z'\> = 0$. Suppose $\<z, z'\> \ne 0$. Then $H_{A}(v) \ne 0$ for $A = E_{12} \in \Lie B_2$. Thus $H_{A'}(v) \ne 0$ for $A' = \e(E_{11} - E_{22}) + E_{12} \in \Lie B_2$ and $\e \ne 0$ small enough. However, $A'$ is $B_2$-conjugate to some $\la (E_{11} - E_{22})$, the moment map $B_2$-equivariant (and even $\t{B}$-equivariant), and by (iii) we already know that $H_{E_{11} - E_{22}}(v) = 0$, so $H_{A'}(v) = 0$. We get a contradiction. Therefore, $\<z, z'\> = 0$, and we get the whole conormal bundle.
\end{proof}

\begin{rem}
	(1) In both proofs of Lemma~\ref{ls rel 1} and Prop.~\ref{ls rel 2} we actually used neither that $V$ is seven-dimensional nor that $G = G_2$. Both proofs work without modifications for any finite-dimensional symplectic representation of any reductive connected linear algebraic group over $\C$.
	
	(2) The statement of Prop.~\ref{ls rel 2} is very similar to in \cite[Exercise 10.6]{l3} (also cf.~Remark~\ref{rem Los-rel-orb}). Indeed, if we consider only Hamiltonian $B$-action, then we will get exactly the statement of \cite[Exercise 10.6]{l3}. But considering $(B \times B_2)$-action instead of $B$-action we get additional conditions for the $B$-orbit in the base of conormal bundle (as one sees from the proof of Prop.~\ref{ls rel 2}). Namely, a relevant orbit should lie in the quadric $Q$.
\end{rem}

\begin{cor} \label{ls rel 3}
  If the number of $B$-orbits in $Q$ is finite, then $\La_{\scrX^\vee}$ is Lagrangian, and
  $\# \mathrm{Irr}\La_{\scrX^\vee}$ equals the number of $B$-orbits in $Q$.
\end{cor}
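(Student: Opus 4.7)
The plan is to combine the description of $\La_{\scrX^\vee}$ from Proposition~\ref{ls rel 2} with the coisotropy statement at the start of \S\ref{ls rel} and the standard dimension count for conormal bundles.

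First I would recall that by Proposition~\ref{ls rel 2}, under the symplectic isomorphism $\Phi\colon \scrX^\vee \iso T^\ast V$, the subvariety $\La_{\scrX^\vee}$ is identified with the union $\bigcup_{\bO} T^\ast_\bO V$, where $\bO$ ranges over the $B$-orbits in $Q$. Each $B$-orbit $\bO \sb V$ is smooth (being a homogeneous space of $B$) and irreducible (since $B$ is connected). The conormal bundle $T^\ast_\bO V$ is a vector bundle of rank $\dim V - \dim \bO$ over $\bO$, so it is irreducible of dimension $\dim V = 7 = \tfrac{1}{2}\dim\scrX^\vee$.

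Next, assuming finitely many $B$-orbits in $Q$, the union $\La_{\scrX^\vee}\iso\bigcup_{\bO}T^\ast_\bO V$ has pure dimension equal to $\dim V = 7$. Combined with the fact (from the beginning of \S\ref{ls rel}, via \cite[Theorem~1.5.7]{cg}) that $\La_{\scrX^\vee}$ is coisotropic, the equality $\dim\La_{\scrX^\vee}=\tfrac12\dim\scrX^\vee$ forces $\La_{\scrX^\vee}$ to be Lagrangian.

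Finally, since the orbits $\bO$ are pairwise disjoint, the irreducible closed subvarieties $\overline{T^\ast_\bO V}$ (one for each orbit) have pairwise distinct bases after projecting to $V$, so none is contained in another. Hence they are exactly the irreducible components of $\La_{\scrX^\vee}$, and $\#\mathrm{Irr}\,\La_{\scrX^\vee}$ equals the number of $B$-orbits in $Q$. No step here is genuinely difficult since Proposition~\ref{ls rel 2} has already done the real work; the only minor point to verify is the pairwise non-inclusion of the closures, which follows because the projection of $\overline{T^\ast_\bO V}$ to $V$ equals $\overline{\bO}$, and distinct $B$-orbits have distinct closures (as orbits partition $Q$ and a closure of one orbit cannot coincide with another).
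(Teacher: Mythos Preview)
Your proposal is correct and follows the same approach as the paper: combine Proposition~\ref{ls rel 2} with the coisotropy from \cite[Theorem~1.5.7]{cg} and the dimension count for conormal bundles. The paper's own proof is terser---it simply asserts that each conormal bundle is irreducible and ``contributes one irreducible component''---whereas you spell out why no two closures $\overline{T^\ast_\bO V}$ coincide; your projection argument works (since $T^\ast_\bO V\subseteq\bO\times V^\ast$ forces $\overline{T^\ast_\bO V}\subseteq\overline\bO\times V^\ast$), though the equidimensionality alone already suffices.
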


Indeed, $\La_{\scrX^\vee}$ is coisotropic, but if there are finitely many $B$-orbits in $Q$, it has
dimension $\f{1}{2} \dim \scrX^\vee$, so $\La_{\scrX^\vee}$ is Lagrangian. Each cotangent bundle to
a $B$-orbit is irreducible, so its closure contributes one irreducible component. It remains to count
the $B$-orbits in the quadric $Q$.

%--------------------------------------

\subsection{Counting orbits in the quadric} \label{ls count}

Denote by $\o{Q}$ the projectivization $\P(Q)$ of $Q$ in $\P(\C^7)$. It contains the highest line
(with respect to the Borel subgroup $B\subset G_2$) $\ell\in\P(\C^7)$. The stabilizer
$\mathrm{Stab}_{G_2}(\ell)$ of $\ell$ is a parabolic subgroup $G_2\supset P\supset B$. Thus we obtain
a closed embedding of the parabolic flag variety $G_2/P$ into $\o{Q}$. Since $\dim G_2/P=5$ and
$\dim\P(\C^7)=5$, we see that $G_2/P\simeq\o{Q}$. Hence the number of $B$-orbits in $\o{Q}$ is the
same as the number of $B$-orbits in $G_2/P$, that is~6 (the cardinality of the quotient of the Weyl
group of $G_2$ modulo its parabolic subgroup $S_2$).

\begin{lem} \label{ls count 5}
	If $q \in Q$ lies in a $B$-orbit $\bO$, then for any $\la \ne 0$ the point $\la q$ also lies in $\bO$.
\end{lem}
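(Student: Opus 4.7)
The plan is to leverage the analysis of $B$-orbits on $\o{Q}$ carried out in the paragraph preceding the lemma. Since the $\C^\times$-scaling action on $Q$ commutes with the $B$-action, it permutes the set of $B$-orbits; we must show this permutation is trivial. For $q = 0$ the orbit is $\{0\}$ and the claim is automatic, so I will assume $q \ne 0$ and write $\bO = B \cdot q$.

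Consider the $B$-equivariant projection $\pi \colon Q \sm \{0\} \to \o{Q}$. The image $\pi(\bO)$ is a single $B$-orbit, hence a Schubert cell $\bar{\bO} \subset G_2/P \simeq \o{Q}$, which contains a unique $T$-fixed point $[\ell]$. The $T$-stable line $\ell \sb \C^7$ is a weight line with some character $\chi \colon T \to \C^\times$, since $T$ acts with one-dimensional weight spaces on $\C^7$. The key observation is that $\chi$ is non-trivial: by $T$-invariance, $\<\cd,\cd\>$ pairs the weight $\mu$ subspace only with the weight $-\mu$ subspace, so the zero weight line is orthogonal to all non-zero weight lines and by non-degeneracy of the form pairs non-degenerately with itself; consequently the zero weight line is not contained in $Q$. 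Thus $\chi \colon T \to \C^\times$ is a non-trivial homomorphism from a two-dimensional torus and is therefore surjective.

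Pick any $v \in \ell \sm \{0\}$. Then $T \cdot v = \chi(T) v = \ell \sm \{0\}$, so $\ell \sm \{0\} \sb B \cdot v$. For an arbitrary $p \in \pi^{-1}(\bar{\bO})$, write $\pi(p) = b \cdot [\ell]$ with $b \in B$; since $p \in b \ell \sm \{0\} = b (T \cdot v)$, we conclude $p \in B \cdot v$. Hence $B \cdot v = \pi^{-1}(\bar{\bO})$, and since $\bO$ is a $B$-orbit mapping onto $\bar{\bO}$ it must coincide with this, i.e.\ $\bO = \pi^{-1}(\bar{\bO})$. The latter is manifestly $\C^\times$-invariant, being a union of fibers of $\pi$, so $\la q \in \bO$ for every $\la \ne 0$. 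The only non-formal input is the observation that the zero weight vector is non-isotropic, which is forced by non-degeneracy of the $G_2$-invariant form proven in \S\ref{ap 7rep}; no further obstacle is expected.
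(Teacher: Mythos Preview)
Your proof is correct and follows essentially the same route as the paper: project to $\o{Q}\simeq G_2/P$, pick the unique $T$-fixed point in the resulting Schubert cell, and use that this weight line has nonzero weight so that $T$ already acts transitively on its punctured line. Your write-up is in fact slightly more careful than the paper's, since you supply the reason the weight is nonzero (the zero-weight line is non-isotropic by non-degeneracy of the invariant form), a point the paper leaves implicit.
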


\begin{proof}
  Set $\o{\bO} = \{l \in \P(Q) : l \cap \bO \ne \vn\}$. Then $\o{\bO}$ is a single $B$-orbit
  in $\o{Q}$. Clearly it is enough to show the lemma for the points in $l \cap \bO$ for some
  line $l \in \o{\bO}$. Fix a maximal torus $T \sb B$. Since $\o{\bO}$ is a $B$-orbit in the
  parabolic flag variety $G_2/P$, it contains a unique $T$-fixed point. It corresponds to a
  nonzero weight space in $\C^7$. Any weight space in $\C^7$ has dimension~1.
  Hence if we remove the origin from this weight space, what remains forms a single $T$-orbit.
  %By \ref{ls count 4} there is $l \in \o{\bO}$, which is $T$-stable, so $l$ corresponds to some weight subspace. Since all weight subspaces in $\C^7$ are one-dimensional, $\<l, l\> = 0$ and $\<\cd, \:\cd\>$ restricts to a non-zero form on the zero-weight subspace, then the weight of $l$ is non-zero, what implies that $l \sm \{0\}$ is a single $T$-orbit.
\end{proof}

\begin{cor} \label{ls count 6}
	$\# (\text{$B$-orbits in $Q$}) = \#(\text{$B$-orbits in $\o{Q}$}) + 1 = 7$.
\end{cor}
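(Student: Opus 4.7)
The plan is to use Lemma~\ref{ls count 5} to establish a bijection between $B$-orbits on $Q \setminus \{0\}$ and $B$-orbits on $\o{Q}$, and then separately account for the origin.

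First I would observe that $0 \in Q$ is $B$-fixed (since $B$ acts linearly) and constitutes a single $B$-orbit by itself. So it suffices to show that the natural projection $p\colon Q \sm \{0\} \to \o{Q}$ induces a bijection on the level of $B$-orbits. This projection is $B$-equivariant because $B$ acts $\C$-linearly on $V$, so the image of any $B$-orbit in $Q \sm \{0\}$ is a $B$-orbit in $\o{Q}$, which gives surjectivity of the induced map (every $B$-orbit in $\o{Q}$ is covered by the $B$-saturation of any preimage point).

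For injectivity, suppose two $B$-orbits $\bO_1, \bO_2 \sb Q \sm \{0\}$ map to the same $B$-orbit $\o{\bO} \sb \o{Q}$. Pick $q_1 \in \bO_1$; then its image lies in $\o{\bO}$, and some $b \cdot q_1$ projects to the image of some $q_2 \in \bO_2$. Hence $b \cdot q_1 = \la q_2$ for some $\la \in \C^\times$. By Lemma~\ref{ls count 5}, $\la q_2$ lies in the same $B$-orbit as $q_2$, namely $\bO_2$. Thus $\bO_1 \ni q_1$ and $\bO_2 \ni b \cdot q_1$ share a point, forcing $\bO_1 = \bO_2$.

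Combining this bijection with the fact that $\o{Q}$ has exactly $6$ $B$-orbits (established in the preceding paragraph via the identification $\o{Q} \simeq G_2/P$ and the Bruhat-type count $|W_{G_2}/W_P| = 6$), we conclude that $Q \sm \{0\}$ has $6$ $B$-orbits, and adding the orbit $\{0\}$ yields $\#(\text{$B$-orbits in $Q$}) = 7$. No step here is a serious obstacle; the only thing worth care is verifying that Lemma~\ref{ls count 5} really does let us freely rescale inside a single $B$-orbit, which is exactly its content.
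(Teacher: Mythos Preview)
Your proposal is correct and follows essentially the same approach as the paper: both use Lemma~\ref{ls count 5} to argue that nonzero $B$-orbits in $Q$ are unions of punctured lines, yielding a bijection with $B$-orbits in $\o{Q}$, and then add the zero orbit. You have simply spelled out the surjectivity/injectivity details that the paper compresses into one sentence.
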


\begin{proof}
  Since a $B$-orbit in $Q$ together with each point contains the whole line passing through
  this point (with the origin removed), the set of $B$-orbits in $Q$, except the zero-orbit, is in a
  natural bijection with the set of $B$-orbits in $\o{Q}$. And the number of latter orbits equals~$6$. 
\end{proof}

Corollary \ref{ls count 6} completes the proof of the second part of Theorem \ref{theo}.

%\newpage
\section{Appendix} \label{ap}

%---------------------------------------

\subsection{Lie subalgebras vs closed subgroups} \label{ap sub}

Let $G$ be a complex linear algebraic group. It is well known that for any closed subgroup $H \sb G$ its tangent space $\h = \Lie H = T_\se H \sb T_\se G \sb \g$ is a Lie subalgebra in $\g$ (see \cite[\S4.4.8]{s}), where $\se$ is a neutral element in $G$. We begin with a simple lemma.

\begin{lem} \label{lie unique}
	Let $H_1, H_2 \sb G$ be two closed connected subgroups. Suppose $\Lie H_1 = \Lie H_2$. Then $H_1 = H_2$.
\end{lem}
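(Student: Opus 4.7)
The plan is to reduce the equality $H_1=H_2$ to equality in a neighborhood of the identity via the exponential map. First I would view $G$ as a complex Lie group and recall that the exponential $\exp\colon\g\to G$ restricts to a biholomorphism from some open neighborhood $V$ of $0\in\g$ onto some open neighborhood $W$ of $\se\in G$ (cf.~\cite[\S3.1]{k}, already used elsewhere in this paper). The key auxiliary fact I would invoke is the standard Lie-group identification
\[
W\cap H \;=\; \exp(V\cap \Lie H)
\]
for any closed connected subgroup $H\subset G$, which follows from $H$ being an embedded complex Lie subgroup with tangent space $\Lie H$ at $\se$ (after shrinking $V$ if necessary).

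Applying this to both $H_1$ and $H_2$ and using the hypothesis $\Lie H_1=\Lie H_2$, I get
\[
W\cap H_1 \;=\; \exp(V\cap\Lie H_1)\;=\;\exp(V\cap\Lie H_2)\;=\;W\cap H_2,
\]
so $H_1$ and $H_2$ share an open neighborhood $U$ of $\se$ in $G$. The final step is the classical observation that any connected topological group is generated by any neighborhood of its identity: iterating products of elements of $U$ yields $H_1\subset\bigcup_{n\geq 1}U^n\subset H_2$ and symmetrically $H_2\subset H_1$, whence $H_1=H_2$.

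The only (very mild) point that needs care is the identification $W\cap H=\exp(V\cap\Lie H)$, which is standard for embedded complex Lie subgroups but requires shrinking $V$ so that $\exp$ trivializes a tubular neighborhood of the identity adapted to $H$; no serious obstacle is expected. A purely algebraic alternative would be to set $H:=(H_1\cap H_2)^\circ$ and observe that its Lie algebra equals $\Lie H_1\cap\Lie H_2=\Lie H_1$ (using that in characteristic $0$ all group schemes are reduced, so the scheme-theoretic intersection computes the expected tangent space); then $H\subset H_1$ is a closed connected subgroup of the same dimension as $H_1$, forcing $H=H_1$ and, symmetrically, $H=H_2$.
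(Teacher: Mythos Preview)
Your main argument is correct and follows essentially the same route as the paper, which simply passes to the complex-analytic category and cites the Lie-theoretic uniqueness statement as \cite[Theorem~3.40]{k}; you have just unpacked that citation via the exponential chart and the fact that a connected group is generated by any neighborhood of the identity. The purely algebraic alternative you sketch at the end (via $(H_1\cap H_2)^\circ$) is also correct and is not in the paper.
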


\begin{proof}
  Since $G$ and $H_1, H_2$ are smooth algebraic varieties, they can be considered also as
  complex varieties and Lie groups. Then the lemma follows from one of the fundamental theorems
  of Lie theory, see \cite[Theorem 3.40]{k}.
\end{proof}

The existence result is more complicated. We prove it only in generality necessary for our paper.

\begin{prop} \label{lie exist}
  Let $G$ be a semisimple complex algebraic group. Let $\h \sb \g$ be a nilpotent Lie
  subalgebra normalized by a maximal torus $T \sb G$ such that $\h \cap \Lie T = 0$.
  Then there is a closed algebraic subgroup $H \sb G$ such that $\Lie H = \h$. 
\end{prop}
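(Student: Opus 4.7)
The plan is to use the $T$-action to decompose $\h$ into root spaces, reduce to the case of a closed subset of positive roots via solvability of $\h+\ft$, and then construct $H$ as an ordered product of one-parameter root subgroups.

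First, since $T$ normalizes $\h$ and acts semisimply on $\g$ via the root decomposition $\g = \ft \oplus \bigoplus_{\al \in R} \g_\al$, the subalgebra $\h$ is a direct sum of $T$-weight subspaces. Combined with $\h \cap \ft = 0$ and the one-dimensionality of each $\g_\al$, this gives $\h = \bigoplus_{\al \in R_\h} \g_\al$, where $R_\h := \{\al \in R : \g_\al \sb \h\}$. Since $\h$ is a subalgebra, $R_\h$ is closed under addition of roots. Moreover, if both $\al$ and $-\al$ lay in $R_\h$, then $[\g_\al, \g_{-\al}]$ --- a nonzero subspace of $\ft$ --- would lie in $\h \cap \ft = 0$, which is impossible. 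Hence $R_\h \cap (-R_\h) = \vn$.

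Next I would show that $R_\h \sb R^+$ for some system $R^+$ of positive roots (equivalently, some Borel subgroup $B \supset T$). The subspace $\h + \ft$ is a Lie subalgebra, because $\ft$ is abelian and normalizes $\h$; it is solvable, because $[\h+\ft, \h+\ft] \sb \h$ is nilpotent. Extending $\h + \ft$ to a maximal solvable subalgebra and invoking the classical fact that the maximal solvable subalgebras of a semisimple Lie algebra are precisely the Borel subalgebras, one obtains a Borel $\fb \supset \h + \ft \supset \ft$. The positive system $R^+$ determined by $\fb$ then satisfies $R_\h \sb R^+$.

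Finally, enumerate $R_\h = \{\al_1, \ld, \al_h\}$ by non-decreasing height and let $u_{\al_i} \colon \bG_a \iso U_{\al_i} \sb G$ be the associated one-parameter root subgroups. Define $\phi \colon \bG_a^h \to G$ by $\phi(t_1, \ld, t_h) = u_{\al_1}(t_1) \cdots u_{\al_h}(t_h)$. The Chevalley commutator relations
\[ u_\al(s)\, u_\be(t) = u_\be(t)\, u_\al(s) \prod_{i, j > 0,\ i\al + j\be \in R} u_{i\al + j\be}\bigl(c_{ij}\, s^i t^j\bigr), \]
combined with closure of $R_\h$ under addition and the fact that each new term $u_{i\al + j\be}$ has strictly greater height than $\al$ and $\be$, imply that the image $H := \phi(\bG_a^h)$ is closed under multiplication and inversion. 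Hence $H$ is a closed algebraic subgroup of $G$, and differentiating $\phi$ at the origin gives $\Lie H = \bigoplus_i \g_{\al_i} = \h$. The main obstacle is the passage from the combinatorial data ($R_\h$ closed under addition, disjoint from $-R_\h$) to a genuine closed subgroup of $G$; once the Borel-containment step has reduced matters to a closed subset of $R^+$, the height-ordered product construction is forced by Chevalley's relations.
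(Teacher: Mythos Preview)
Your proof is correct and follows essentially the same overall architecture as the paper's---decompose $\h$ into root spaces, then build $H$ as an ordered product of the corresponding one-parameter root subgroups and use the Chevalley commutator relations to verify the image is a subgroup---but the termination argument is organized differently. You first embed $R_\h$ globally into a single positive system $R^+$ by extending the solvable algebra $\h + \ft$ to a Borel subalgebra, and then order the roots by height, so that every commutator term $u_{i\al+j\be}$ has strictly greater height and the reshuffling process terminates because heights are bounded. The paper instead avoids the global Borel-containment step: it observes only that for each \emph{pair} $\al,\be \in R_\h$ one has $\al \ne -\be$ (so some positive system contains both), and then controls the commutator process using the lower central series $D_k(\h)$, proving a lemma that each new factor lies in $R_{D_{k+1}(\h)}$; nilpotency of $\h$ then forces $D_N(\h) = 0$ and terminates the procedure.

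Your route is arguably the more textbook one---once $R_\h \sb R^+$, the construction is exactly the standard ``closed subsets of positive roots yield closed subgroups of the unipotent radical'' argument (cf.\ \cite[Prop.~8.2.3]{s})---whereas the paper's argument uses the nilpotency hypothesis more directly and never needs to invoke the classification of maximal solvable subalgebras. One small point worth making explicit in your write-up: closure of $R_\h$ under \emph{single} addition (which is what follows immediately from $\h$ being a subalgebra) must be promoted to closure under arbitrary positive combinations $i\al + j\be$ appearing in the commutator formula; this is the rank-two root-string fact that the paper records as its Lemma~\ref{sum of 2 roots}. Also, the passage from ``$\phi(\bG_a^h)$ is an abstract subgroup'' to ``closed algebraic subgroup'' deserves a one-line justification (the image of a morphism from an irreducible variety is constructible, and a constructible subgroup is closed; the paper cites \cite[Prop.~2.2.5]{s}).
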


\begin{proof}
  Since $\h$ is normalized by $T$, there is a weight decomposition
  $\h = \bigoplus_{\al \in X^\ast(T)} \h_\al$, where $X^\ast(T)$ stands for the character lattice of
  $T$ (see \cite[Theorem 3.2.3]{s}). Denote by $R$ the set of roots of $G$, and set $R_\h = \{\al \in R : \h_\al \ne 0\}$. Then $\h = \h_0 \oplus \bigoplus_{\al \in R_\h} \g_\al$, where $\g_\al$ is 1-dimensional weight
  subspace in $\g$ of weight $\al$. Since $\h_0 \sb \g_0 = \Lie T$, we have $\h_0 = 0$, because
  we assume $\h \cap \Lie T = 0$.
	
	Set $D_0(\h) = \h$, $D_k(\h) = [\h, D_{k-1}(\h)]$ for $k \br 1$. Define $R_{D_k(\h)}$ similarly to $R_\h$. 
	
	\begin{lem} \label{sum of 2 roots}
		Suppose $\al \in R_\h, \be \in R_{D_k(\h)}$ and $i \al + j \be \in R$, where $i, j \in \Z_{> 0}$. Then $i \al + j \be \in R_{D_{k+1}}(\h)$.  
	\end{lem}
	
	\begin{proof}
	  It is enough to deal only with rank two root systems. It will be clear from the proof that it is enough to consider only the case $k = 0$. Clearly $\al \ne -\be$, otherwise $[\h_\al, \h_\be] \sb \h_0$ is non-zero and we get a contradiction. Then without loss of generality we may assume that $\al, \be$ are positive roots. Looking explicitly at all rank two reduced root systems, one can see that if $i \al + j \be \in R$, then $(i - 1) \al + j \be, i \al + (j - 1)\be \in R$. Thus we may suppose that $i = j = 1$. By \cite[Theorem 6.44~(7)]{k} $[\g_\al, \g_\be] = \g_{\al +\be}$ in this case, so $\h_{\al + \be} = \g_{\al +\be} = [\h_\al, \h_\be] \ne 0$ and hence $\al + \be \in R_{D_1(\h)}$.
	\end{proof}
	
	Let $R_\h = \{\al_1, \ld, \al_h\}$. For any $\al \in R$ denote by $U_\al$ the corresponding
        unipotent subgroup in $G$ and fix an isomorphism $u_\al \colon \bG_a \iso U_\al$ such that for
        any $x \in \C$, $t \in T$ we have $t u_\al(x) t^{-1} = u_\al(\al(t) x)$ (cf.~\cite[Prop.~8.2.1]{s}).
        Then there is a morphism $\phi \colon \bG_a^h \to G$, $(x_1, \ld, x_h) \mapsto u_{\al_1}(x_1) \times \ld \times u_{\al_h}(x_h)$. 
	
	\begin{lem}
		The image of $\phi$ is a closed subgroup in $G$.
	\end{lem}
	
	\begin{proof}
	  Due to \cite[Prop.~2.2.5]{s} it is enough to prove that $\Im(\phi)$ is a subgroup in
          $G$. Equivalently, it is enough to prove that for any $y \in \C$ and $1 \mr j \mr h$
          the element $u_{\al_j}(y) \times u_{\al_1}(x_1) \times \ld \times u_{\al_h}(x_h)$ lies in $\Im(\phi)$.
          Since for any $\al, \be \in R_\h$ we have $\al \ne -\be$, there is a system of positive
          roots in $R$ containing $\al$ and $\be$. Thus one can apply \cite[Prop.~8.2.3]{s},
          i.e.~we can permute factors, so at each step there would appear the product of factors
          with "greater" index. More precisely, let us put $u_{\al_i}(y)$ at the $i^{th}$ position.
          While moving past $1^{st}, 2^{nd}, \ld, (i - 1)^{th}$ factors there could appear some
          other factors of the form $u_{k_i \al_i + k_l \al_l}(x_{i, l})$ $(k_i, k_l > 0)$
          such that by Lemma \ref{sum of 2 roots} $k_i \al_i + k_l \al_l \in R_{D_1(\h)}$. Then we move
          each factor $u_{k_i \al_i + k_l \al_l}(x_{i, l})$ that appeared above back to its position,
          i.e.\ move it past other factors to the initial factor $u_{k_i \al_i + k_l \al_l}$.
          While doing this, there could appear some other factors of the form
          $u_{k_{i, l}(k_i \al_i + k_l \al_l) + k_m \al_m}(x_{i, l, m})$ $(k_{i, l}, k_m > 0)$ such that
          by Lemma \ref{sum of 2 roots} $k_{i, l}(k_i \al_i + k_l \al_l) + k_m \al_m \in R_{D_2(\h)}$.
          And so on. Since $\h$ is nilpotent, $D_N(\h) = 0$ for some $N \br 1$. Therefore,
          the factors appearing at the $N^{th}$ step, are trivial, because their weights lie in
          $R_{D_N(\h)} = \vn$. So finally we can permute all the factors and get an element of $\Im(\phi)$.
	\end{proof}
	
	Now let us show that $H = \Im(\phi)$ is the desired subgroup. Since
        $\g_\al = \Lie \Im(u_{\al})$ for any $\al \in R$, by construction of $\phi$ we have
        $\h \sb \Lie H$. On the other hand, $\dim H \mr \dim \bG_a^h = h = \dim \h$. So $\h = \Lie H$ and we are done.
\end{proof}

\begin{rem}
	Prop.~\ref{lie exist} is a generalization of \cite[Prop.~8.2.4]{s}, and the proof is based on the technique similar to \cite[Prop.~8.2.3]{s}. 
\end{rem}

%--------------------------------------------

\subsection{Some facts about $G_2$} \label{ap g2}

Consider a vector space $\g_2:= V \oplus \sl_3(\C) \oplus V^t$, where $V = \C^3$. One can
view $V$ as $3 \times 1$-matrices (i.e.\ columns) with entries in $\C$, and $V^t$ as $1 \times 3$-matrices (i.e.\ rows) with entries in $\C$. Introduce a Lie bracket on $\g_2$ as follows: let $x \in \sl_3(\C)$, $v, w \in V$, then
\[[x, v] = xv, \: [x, v^t] = - v^t x, \: [v, w] = 2(v \times w)^t, \: [v^t, w^t] = 2 v \times w, \: [v, w^t] = -3vw^t + w^tv \cd \Id\]
It is well known (cf.\ a similar construction in \cite[\S 22]{fh}), that $\g_2$
is a well-defined simple Lie algebra. Its Cartan subalgebra is the diagonal Cartan subalgebra $\h \sb \sl_3(\C)$. By $E_{ij} \in \gl_3(\C)$ I denote the elementary matrix with $1$ in $i^{th}$-row and $j^{th}$ column ($i\ne j$) and zero in other cells.  Denote by $e_1, e_2, e_3$ the standard base in $V = \C^3$ and $f_i := e_i^t$.

Elements of $\h$ can be presented in the form $a_1 E_{11} + a_2 E_{22} + a_3 E_{33}$, where $a_i \in \C$ and $a_1 + a_2 + a_3 = 0$. So $a_i$ can be viewed as elements of $\h^\ast$. It is easy to realize that we have the following root decomposition:
\begin{itemize}
	\item $e_i$ has weight $a_i$;
	\item $f_i$ has weight $-a_i$;
	\item $E_{ij}$ has weight $a_i - a_j$, where $i \ne j$.
\end{itemize}
One can write the Killing form on $\h$ (using an explicit computation):
\[(a_1 E_{11} + a_2 E_{22} + a_3 E_{33}, b_1 E_{11} + b_2 E_{22} + b_3 E_{33}) = 6 \sum \l_{i = 1}^3 a_ib_i - 2 \sum \l_{i \ne j} a_i b_j\]
By this Killing form $\pm a_i$ corresponds to $\pm \f{3E_{ii} - (E_{11} + E_{22} + E_{33})}{24}$, $a_i - a_j$ corresponds to $\f{E_{ii} - E_{jj}}{8}$. Thus, 
\[\|a_i\|^2 = \f{1}{12}, \quad \|a_i - a_j\|^2 = \f{1}{4} = 3 \cd \|a_i\|^2\]
Thus $a_i$ is shorter than $a_k - a_l$. Actually the Killing form restricted to the plane $\R^2$ is the standard Euclidean form such that the length of $a_i$ is $\f{1}{12}$.

\begin{figure}[h]
\[\begin{tikzpicture}[x=0.75pt,y=0.75pt,yscale=-1,xscale=1]
	%uncomment if require: \path (0,300); %set diagram left start at 0, and has height of 300
	
	%Shape: Regular Polygon [id:dp6085444774008393] 
	\draw  [color={rgb, 255:red, 0; green, 0; blue, 0 }  ,draw opacity=1 ][dash pattern={on 0.84pt off 2.51pt}] (339.4,200.12) -- (270.28,240.25) -- (200.96,200.45) -- (200.76,120.53) -- (269.88,80.4) -- (339.2,120.19) -- cycle ;
	%Straight Lines [id:da19174600570346767] 
	\draw [color={rgb, 255:red, 0; green, 0; blue, 0 }  ,draw opacity=1 ]   (270.08,160.32) -- (337.66,199.12) ;
	\draw [shift={(339.4,200.12)}, rotate = 209.86] [color={rgb, 255:red, 0; green, 0; blue, 0 }  ,draw opacity=1 ][line width=0.75]    (10.93,-3.29) .. controls (6.95,-1.4) and (3.31,-0.3) .. (0,0) .. controls (3.31,0.3) and (6.95,1.4) .. (10.93,3.29)   ;
	%Straight Lines [id:da5741091036021762] 
	\draw [color={rgb, 255:red, 0; green, 0; blue, 0 }  ,draw opacity=1 ]   (270.08,160.32) -- (337.47,121.19) ;
	\draw [shift={(339.2,120.19)}, rotate = 149.86] [color={rgb, 255:red, 0; green, 0; blue, 0 }  ,draw opacity=1 ][line width=0.75]    (10.93,-3.29) .. controls (6.95,-1.4) and (3.31,-0.3) .. (0,0) .. controls (3.31,0.3) and (6.95,1.4) .. (10.93,3.29)   ;
	%Straight Lines [id:da32025372333532953] 
	\draw [color={rgb, 255:red, 0; green, 0; blue, 0 }  ,draw opacity=1 ]   (270.08,160.32) -- (269.89,82.4) ;
	\draw [shift={(269.88,80.4)}, rotate = 89.86] [color={rgb, 255:red, 0; green, 0; blue, 0 }  ,draw opacity=1 ][line width=0.75]    (10.93,-3.29) .. controls (6.95,-1.4) and (3.31,-0.3) .. (0,0) .. controls (3.31,0.3) and (6.95,1.4) .. (10.93,3.29)   ;
	%Straight Lines [id:da4368612378943293] 
	\draw [color={rgb, 255:red, 0; green, 0; blue, 0 }  ,draw opacity=1 ]   (270.08,160.32) -- (270.27,238.25) ;
	\draw [shift={(270.28,240.25)}, rotate = 269.86] [color={rgb, 255:red, 0; green, 0; blue, 0 }  ,draw opacity=1 ][line width=0.75]    (10.93,-3.29) .. controls (6.95,-1.4) and (3.31,-0.3) .. (0,0) .. controls (3.31,0.3) and (6.95,1.4) .. (10.93,3.29)   ;
	%Straight Lines [id:da12799681673368135] 
	\draw [color={rgb, 255:red, 0; green, 0; blue, 0 }  ,draw opacity=1 ]   (270.08,160.32) -- (202.69,199.45) ;
	\draw [shift={(200.96,200.45)}, rotate = 329.86] [color={rgb, 255:red, 0; green, 0; blue, 0 }  ,draw opacity=1 ][line width=0.75]    (10.93,-3.29) .. controls (6.95,-1.4) and (3.31,-0.3) .. (0,0) .. controls (3.31,0.3) and (6.95,1.4) .. (10.93,3.29)   ;
	%Straight Lines [id:da6089343824159903] 
	\draw [color={rgb, 255:red, 0; green, 0; blue, 0 }  ,draw opacity=1 ]   (202.5,121.52) -- (270.08,160.32) ;
	\draw [shift={(200.76,120.53)}, rotate = 29.86] [color={rgb, 255:red, 0; green, 0; blue, 0 }  ,draw opacity=1 ][line width=0.75]    (10.93,-3.29) .. controls (6.95,-1.4) and (3.31,-0.3) .. (0,0) .. controls (3.31,0.3) and (6.95,1.4) .. (10.93,3.29)   ;
	%Straight Lines [id:da10385210890883467] 
	\draw [color={rgb, 255:red, 0; green, 0; blue, 0 }  ,draw opacity=1 ] [dash pattern={on 0.84pt off 2.51pt}]  (200.76,120.53) -- (339.2,120.19) ;
	%Straight Lines [id:da7219783336163355] 
	\draw [color={rgb, 255:red, 0; green, 0; blue, 0 }  ,draw opacity=1 ] [dash pattern={on 0.84pt off 2.51pt}]  (200.96,200.45) -- (339.4,200.12) ;
	%Straight Lines [id:da65407717470285] 
	\draw [color={rgb, 255:red, 0; green, 0; blue, 0 }  ,draw opacity=1 ] [dash pattern={on 0.84pt off 2.51pt}]  (269.88,80.4) -- (339.4,200.12) ;
	%Straight Lines [id:da2851944373115256] 
	\draw [color={rgb, 255:red, 0; green, 0; blue, 0 }  ,draw opacity=1 ] [dash pattern={on 0.84pt off 2.51pt}]  (200.96,200.45) -- (269.88,80.4) ;
	%Straight Lines [id:da32209381340309906] 
	\draw [color={rgb, 255:red, 0; green, 0; blue, 0 }  ,draw opacity=1 ] [dash pattern={on 0.84pt off 2.51pt}]  (200.76,120.53) -- (270.28,240.25) ;
	%Straight Lines [id:da31120916819073985] 
	\draw [color={rgb, 255:red, 0; green, 0; blue, 0 }  ,draw opacity=1 ] [dash pattern={on 0.84pt off 2.51pt}]  (339.2,120.19) -- (270.28,240.25) ;
	%Straight Lines [id:da0656077922694247] 
	\draw [color={rgb, 255:red, 0; green, 0; blue, 0 }  ,draw opacity=1 ]   (247.89,122.29) -- (270.08,160.32) ;
	\draw [shift={(246.88,120.56)}, rotate = 59.74] [color={rgb, 255:red, 0; green, 0; blue, 0 }  ,draw opacity=1 ][line width=0.75]    (10.93,-3.29) .. controls (6.95,-1.4) and (3.31,-0.3) .. (0,0) .. controls (3.31,0.3) and (6.95,1.4) .. (10.93,3.29)   ;
	%Straight Lines [id:da4463044615147751] 
	\draw [color={rgb, 255:red, 0; green, 0; blue, 0 }  ,draw opacity=1 ]   (270.08,160.32) -- (292.27,122.29) ;
	\draw [shift={(293.28,120.56)}, rotate = 120.26] [color={rgb, 255:red, 0; green, 0; blue, 0 }  ,draw opacity=1 ][line width=0.75]    (10.93,-3.29) .. controls (6.95,-1.4) and (3.31,-0.3) .. (0,0) .. controls (3.31,0.3) and (6.95,1.4) .. (10.93,3.29)   ;
	%Straight Lines [id:da7853493297120073] 
	\draw [color={rgb, 255:red, 0; green, 0; blue, 0 }  ,draw opacity=1 ]   (270.08,160.32) -- (314.08,160.17) ;
	\draw [shift={(316.08,160.16)}, rotate = 179.8] [color={rgb, 255:red, 0; green, 0; blue, 0 }  ,draw opacity=1 ][line width=0.75]    (10.93,-3.29) .. controls (6.95,-1.4) and (3.31,-0.3) .. (0,0) .. controls (3.31,0.3) and (6.95,1.4) .. (10.93,3.29)   ;
	%Straight Lines [id:da8373387367867924] 
	\draw [color={rgb, 255:red, 0; green, 0; blue, 0 }  ,draw opacity=1 ]   (270.08,160.32) -- (225.28,160.55) ;
	\draw [shift={(223.28,160.56)}, rotate = 359.71] [color={rgb, 255:red, 0; green, 0; blue, 0 }  ,draw opacity=1 ][line width=0.75]    (10.93,-3.29) .. controls (6.95,-1.4) and (3.31,-0.3) .. (0,0) .. controls (3.31,0.3) and (6.95,1.4) .. (10.93,3.29)   ;
	%Straight Lines [id:da23769129978275538] 
	\draw [color={rgb, 255:red, 0; green, 0; blue, 0 }  ,draw opacity=1 ]   (270.08,160.32) -- (292.28,198.83) ;
	\draw [shift={(293.28,200.56)}, rotate = 240.03] [color={rgb, 255:red, 0; green, 0; blue, 0 }  ,draw opacity=1 ][line width=0.75]    (10.93,-3.29) .. controls (6.95,-1.4) and (3.31,-0.3) .. (0,0) .. controls (3.31,0.3) and (6.95,1.4) .. (10.93,3.29)   ;
	%Straight Lines [id:da012206067151225852] 
	\draw [color={rgb, 255:red, 0; green, 0; blue, 0 }  ,draw opacity=1 ]   (270.08,160.32) -- (247.89,198.04) ;
	\draw [shift={(246.88,199.76)}, rotate = 300.47] [color={rgb, 255:red, 0; green, 0; blue, 0 }  ,draw opacity=1 ][line width=0.75]    (10.93,-3.29) .. controls (6.95,-1.4) and (3.31,-0.3) .. (0,0) .. controls (3.31,0.3) and (6.95,1.4) .. (10.93,3.29)   ;
	%Straight Lines [id:da14577413843621123] 
	\draw [color={rgb, 255:red, 0; green, 0; blue, 0 }  ,draw opacity=1 ] [dash pattern={on 4.5pt off 4.5pt}]  (194.08,96.4) -- (350.08,227.96) ;
	
	% Text Node
	\draw (258.08,66.4) node [anchor=north west][inner sep=0.75pt]  [font=\tiny]  {$a_{2} -a_{3}$};
	% Text Node
	\draw (292.08,110) node [anchor=north west][inner sep=0.75pt]  [font=\tiny]  {$-a_{3}$};
	% Text Node
	\draw (340.48,111) node [anchor=north west][inner sep=0.75pt]  [font=\tiny]  {$a_{1} -a_{3}$};
	% Text Node
	\draw (319.28,156) node [anchor=north west][inner sep=0.75pt]  [font=\tiny]  {$a_{1}$};
	% Text Node
	\draw (342.48,193.4) node [anchor=north west][inner sep=0.75pt]  [font=\tiny]  {$a_{1} -a_{2}$};
	% Text Node
	\draw (295.28,203.96) node [anchor=north west][inner sep=0.75pt]  [font=\tiny]  {$-a_{2}$};
	% Text Node
	\draw (258.48,246.6) node [anchor=north west][inner sep=0.75pt]  [font=\tiny]  {$a_{3} -a_{2}$};
	% Text Node
	\draw (236.88,204.2) node [anchor=north west][inner sep=0.75pt]  [font=\tiny]  {$a_{3}$};
	% Text Node
	\draw (169.68,201.4) node [anchor=north west][inner sep=0.75pt]  [font=\tiny]  {$a_{3} -a_{1}$};
	% Text Node
	\draw (202.88,157.8) node [anchor=north west][inner sep=0.75pt]  [font=\tiny]  {$-a_{1}$};
	% Text Node
	\draw (167.68,111.8) node [anchor=north west][inner sep=0.75pt]  [font=\tiny]  {$a_{2} -a_{1}$};
	% Text Node
	\draw (237.28,110.8) node [anchor=north west][inner sep=0.75pt]  [font=\tiny]  {$a_{2}$};
	
\end{tikzpicture}\]
\caption{Roots of $\g_2$}
\label{3}
\end{figure}

%--------------------------------------

\subsection{Seven-dimensional representation of $\g_2$} \label{ap 7rep}

Consider a polarization given by the dashed line in Figure \ref{3}, so the system of positive roots consist of all roots above the dashed line. Then the Weyl chamber $C_+$ between roots $-a_3$ and $a_1 - a_3$ in Figure \ref{3} is the
fundamental chamber. Consider the root $(-a_3)$ in this chamber. Then there is a unique irreducible
representation $V_{-a_3}$ with highest weight $(-a_3)$. Its weights are depicted in Figure \ref{4}:

\begin{figure}[h]
 \[\begin{tikzpicture}[x=0.75pt,y=0.75pt,yscale=-1,xscale=1]
	%uncomment if require: \path (0,300); %set diagram left start at 0, and has height of 300
	
	%Shape: Regular Polygon [id:dp26464872914343385] 
	\draw   (271.04,100.24) -- (240.56,153.03) -- (179.6,153.03) -- (149.12,100.24) -- (179.6,47.45) -- (240.56,47.45) -- cycle ;
	
	% Text Node
	\draw (214.72,92) node [anchor=north west][inner sep=0.75pt]  [font=\tiny]  {$0$};
	% Text Node
	\draw (203.92,97.4) node [anchor=north west][inner sep=0.75pt]    {$\bullet $};
	% Text Node
	\draw (240.32,36.8) node [anchor=north west][inner sep=0.75pt]  [font=\tiny]  {$-a_{3}$};
	% Text Node
	\draw (168.72,37.2) node [anchor=north west][inner sep=0.75pt]  [font=\tiny]  {$a_{2}$};
	% Text Node
	\draw (127.92,92) node [anchor=north west][inner sep=0.75pt]  [font=\tiny]  {$-a_{1}$};
	% Text Node
	\draw (169.12,155) node [anchor=north west][inner sep=0.75pt]  [font=\tiny]  {$a_{3}$};
	% Text Node
	\draw (232.72,157) node [anchor=north west][inner sep=0.75pt]  [font=\tiny]  {$-a_{2}$};
	% Text Node
	\draw (274.32,92.2) node [anchor=north west][inner sep=0.75pt]  [font=\tiny]  {$a_{1}$};

\end{tikzpicture}\]
\caption{Weights of $V_{-a_3}$}
\label{4}
\end{figure}
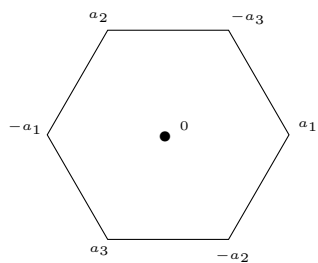

As follows from the representation theory of $\sl_2$, the weights on the perimeter have
multiplicity~$1$.
%But it can happen that the weight in the middle (zero-weight) has $2$-dimensional vector space (as for adjoint representation of $\sl_3$, for example).

\begin{lem} \label{ap 7rep 1}
	The zero-weight space in $V_{-a_3}$ is one-dimensional.
\end{lem}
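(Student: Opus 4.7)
The plan is to combine the Weyl-invariance of weight multiplicities with a direct dimension count. The six non-zero weights $\pm a_1,\pm a_2,\pm a_3$ depicted in Figure~\ref{4} are precisely the short roots of $G_2$, on which the Weyl group (of order~$12$) acts transitively; each of these weights is therefore conjugate to the highest weight $-a_3$ of $V_{-a_3}$ and so has multiplicity $1$. Consequently, if $m$ denotes the multiplicity of the zero weight, then $\dim V_{-a_3}=6+m$, and it remains to show $\dim V_{-a_3}=7$.

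The shortest way to obtain $\dim V_{-a_3}=7$ is the Weyl dimension formula
\[
\dim V_{-a_3}\;=\;\prod_{\gamma>0}\frac{(\omega_1+\rho,\gamma)}{(\rho,\gamma)},
\]
where $\omega_1=-a_3$, the product is over the six positive roots of $G_2$, and $\rho$ is their half-sum. Using the Euclidean root-system data of~\S\ref{ap g2} the six factors evaluate to $2,\ 1,\ 5/4,\ 7/5,\ 3/2,\ 4/3$, whose product is $7$; hence $m=1$.

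A more hands-on alternative, in keeping with the self-contained style of the appendix, is to build $V_{-a_3}$ explicitly as $W=\C\oplus V\oplus V^t$ (with $V=\C^3$): the subalgebra $\sl_3\subset\g_2$ acts standardly on $V$, dually on $V^t$, and trivially on $\C$, while the summands $V$ and $V^t$ of the decomposition $\g_2=V\oplus\sl_3\oplus V^t$ from~\S\ref{ap g2} act on $W$ via the natural pairings between $\C$ and $V$ (resp.~$V^t$) and via cross products between $V$ and $V^t$, with scalar prefactors fixed by compatibility with the $\g_2$-Lie bracket. The resulting module has dimension $7$, contains a vector of weight $-a_3$ inside the $V^t$-summand that generates all of $W$, and has its zero-weight space equal to the $\C$-summand, which is one-dimensional. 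The main technical point in either approach is a single routine calculation: the numerical evaluation of the Weyl product, or the verification of the action relations for the chosen scalars; the heart of the lemma is really just the dimension count $\dim V_{-a_3}=7$ once the Weyl-orbit observation has cleared away the six outer weights.
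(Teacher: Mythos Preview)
Your proof is correct but proceeds in the opposite direction to the paper. The paper argues directly inside the module: starting from a highest weight vector $v$, it observes that the zero-weight space is spanned by $f_1 f_2 \cdot v$ and $f_2 f_1 \cdot v$, and then shows these are proportional using only the commutation relation $[E_{12}, f_1] = -f_2$ together with $f_1 \cdot (f_1 v) = 0$. From this the paper \emph{deduces} $\dim V_{-a_3} = 7$ in the sentence immediately following the lemma. You reverse the logic: first establish $\dim V_{-a_3} = 7$ externally (via the Weyl dimension formula or the explicit model $\C \oplus V \oplus V^t$), then subtract the six outer multiplicities.

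Your route imports more machinery---the Weyl dimension formula, or else a verification that the explicit $7$-dimensional module is irreducible with the right highest weight---but is conceptually transparent and your numerical check of the six Weyl factors is correct. The paper's route is more elementary, needing only a single bracket identity, and has the side benefit of actually identifying a spanning vector $u$ of the zero-weight space, which is used immediately afterwards (Lemma~\ref{ap 7rep 2} and Lemma~\ref{ap 7rep 3}) to write down the invariant quadratic form $q = u^2 - 4v\t{v} - 4w\t{w} - 4t\t{t}$. For a self-contained appendix the paper's direct computation meshes better with what follows, but your argument is perfectly valid on its own terms.
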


\begin{proof}
  Denote by $v$ the highest weight vector. Then we need to show that
  $f_1f_2 \cd v \sim f_2f_1 \cd v$. First note that $E_{12} f_2 \cd v \sim f_1$, because the weight space corresponding to $a_1$ is one-dimensional. Set $w = f_1 \cd v$. Thus, it is enough to show that $f_1 E_{12} \cd w \sim f_2 \cd w$. Indeed, $[E_{12}, f_1] = -f_2$, so $f_1 E_{12} \cd w = f_2 \cd w - E_{12} f_1 \cd w$. Since $f_1 \cd w = 0$, we have $f_1 E_{12} \cd w = f_2 \cd w$. 
\end{proof}

Hence we get $\dim V_{-a_3} = 7$. The next step is to show that there is a non-degenerate $q \in S^2 V_{-a_3}$ invariant with respect to $\g_2$. We will find this $q$ explicitly.

\begin{lem} \label{ap 7rep 2}
	One can choose weight vectors $v, \t{v}, u, \ld$ such that the diagram in Figure \ref{5}, where $E_{32}$, $-E_{12}$, $-E_{32}$, $E_{12}$ act by the dotted arrows, commutes.

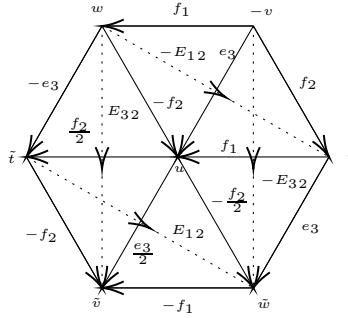
\begin{figure}[h]
        \[\begin{tikzpicture}[x=0.75pt,y=0.75pt,yscale=-1,xscale=1]
        	%uncomment if require: \path (0,300); %set diagram left start at 0, and has height of 300
        	
        	%Shape: Regular Polygon [id:dp8152677967494926] 
        	\draw  [color={rgb, 255:red, 0; green, 0; blue, 0 }  ,draw opacity=1 ] (326.64,140.8) -- (288.48,206.9) -- (212.16,206.9) -- (174,140.8) -- (212.16,74.7) -- (288.48,74.7) -- cycle ;
        	%Straight Lines [id:da5957013348834603] 
        	\draw [color={rgb, 255:red, 0; green, 0; blue, 0 }  ,draw opacity=1 ]   (288.48,74.7) -- (214.16,74.7) ;
        	\draw [shift={(212.16,74.7)}, rotate = 360] [color={rgb, 255:red, 0; green, 0; blue, 0 }  ,draw opacity=1 ][line width=0.75]    (10.93,-3.29) .. controls (6.95,-1.4) and (3.31,-0.3) .. (0,0) .. controls (3.31,0.3) and (6.95,1.4) .. (10.93,3.29)   ;
        	%Straight Lines [id:da5542866909802187] 
        	\draw [color={rgb, 255:red, 0; green, 0; blue, 0 }  ,draw opacity=1 ]   (212.16,74.7) -- (175,139.07) ;
        	\draw [shift={(174,140.8)}, rotate = 300] [color={rgb, 255:red, 0; green, 0; blue, 0 }  ,draw opacity=1 ][line width=0.75]    (10.93,-3.29) .. controls (6.95,-1.4) and (3.31,-0.3) .. (0,0) .. controls (3.31,0.3) and (6.95,1.4) .. (10.93,3.29)   ;
        	%Straight Lines [id:da3230376574952636] 
        	\draw [color={rgb, 255:red, 0; green, 0; blue, 0 }  ,draw opacity=1 ]   (174,140.8) -- (211.16,205.16) ;
        	\draw [shift={(212.16,206.9)}, rotate = 240] [color={rgb, 255:red, 0; green, 0; blue, 0 }  ,draw opacity=1 ][line width=0.75]    (10.93,-3.29) .. controls (6.95,-1.4) and (3.31,-0.3) .. (0,0) .. controls (3.31,0.3) and (6.95,1.4) .. (10.93,3.29)   ;
        	%Straight Lines [id:da5527098561454415] 
        	\draw [color={rgb, 255:red, 0; green, 0; blue, 0 }  ,draw opacity=1 ]   (288.48,206.9) -- (214.16,206.9) ;
        	\draw [shift={(212.16,206.9)}, rotate = 360] [color={rgb, 255:red, 0; green, 0; blue, 0 }  ,draw opacity=1 ][line width=0.75]    (10.93,-3.29) .. controls (6.95,-1.4) and (3.31,-0.3) .. (0,0) .. controls (3.31,0.3) and (6.95,1.4) .. (10.93,3.29)   ;
        	%Straight Lines [id:da5076182327463195] 
        	\draw [color={rgb, 255:red, 0; green, 0; blue, 0 }  ,draw opacity=1 ]   (288.48,74.7) -- (325.64,139.07) ;
        	\draw [shift={(326.64,140.8)}, rotate = 240] [color={rgb, 255:red, 0; green, 0; blue, 0 }  ,draw opacity=1 ][line width=0.75]    (10.93,-3.29) .. controls (6.95,-1.4) and (3.31,-0.3) .. (0,0) .. controls (3.31,0.3) and (6.95,1.4) .. (10.93,3.29)   ;
        	%Straight Lines [id:da025259857521237494] 
        	\draw [color={rgb, 255:red, 0; green, 0; blue, 0 }  ,draw opacity=1 ]   (326.64,140.8) -- (289.48,205.16) ;
        	\draw [shift={(288.48,206.9)}, rotate = 300] [color={rgb, 255:red, 0; green, 0; blue, 0 }  ,draw opacity=1 ][line width=0.75]    (10.93,-3.29) .. controls (6.95,-1.4) and (3.31,-0.3) .. (0,0) .. controls (3.31,0.3) and (6.95,1.4) .. (10.93,3.29)   ;
        	%Straight Lines [id:da04562765926340573] 
        	\draw [color={rgb, 255:red, 0; green, 0; blue, 0 }  ,draw opacity=1 ]   (212.16,74.7) -- (249.32,139.07) ;
        	\draw [shift={(250.32,140.8)}, rotate = 240] [color={rgb, 255:red, 0; green, 0; blue, 0 }  ,draw opacity=1 ][line width=0.75]    (10.93,-3.29) .. controls (6.95,-1.4) and (3.31,-0.3) .. (0,0) .. controls (3.31,0.3) and (6.95,1.4) .. (10.93,3.29)   ;
        	%Straight Lines [id:da605102132654457] 
        	\draw [color={rgb, 255:red, 0; green, 0; blue, 0 }  ,draw opacity=1 ]   (288.48,74.7) -- (251.32,139.07) ;
        	\draw [shift={(250.32,140.8)}, rotate = 300] [color={rgb, 255:red, 0; green, 0; blue, 0 }  ,draw opacity=1 ][line width=0.75]    (10.93,-3.29) .. controls (6.95,-1.4) and (3.31,-0.3) .. (0,0) .. controls (3.31,0.3) and (6.95,1.4) .. (10.93,3.29)   ;
        	%Straight Lines [id:da5448395135816391] 
        	\draw [color={rgb, 255:red, 0; green, 0; blue, 0 }  ,draw opacity=1 ]   (176,140.8) -- (250.32,140.8) ;
        	\draw [shift={(174,140.8)}, rotate = 0] [color={rgb, 255:red, 0; green, 0; blue, 0 }  ,draw opacity=1 ][line width=0.75]    (10.93,-3.29) .. controls (6.95,-1.4) and (3.31,-0.3) .. (0,0) .. controls (3.31,0.3) and (6.95,1.4) .. (10.93,3.29)   ;
        	%Straight Lines [id:da6296462643314595] 
        	\draw [color={rgb, 255:red, 0; green, 0; blue, 0 }  ,draw opacity=1 ]   (326.64,140.8) -- (252.32,140.8) ;
        	\draw [shift={(250.32,140.8)}, rotate = 360] [color={rgb, 255:red, 0; green, 0; blue, 0 }  ,draw opacity=1 ][line width=0.75]    (10.93,-3.29) .. controls (6.95,-1.4) and (3.31,-0.3) .. (0,0) .. controls (3.31,0.3) and (6.95,1.4) .. (10.93,3.29)   ;
        	%Straight Lines [id:da9797453659601272] 
        	\draw [color={rgb, 255:red, 0; green, 0; blue, 0 }  ,draw opacity=1 ]   (250.32,140.8) -- (287.48,205.16) ;
        	\draw [shift={(288.48,206.9)}, rotate = 240] [color={rgb, 255:red, 0; green, 0; blue, 0 }  ,draw opacity=1 ][line width=0.75]    (10.93,-3.29) .. controls (6.95,-1.4) and (3.31,-0.3) .. (0,0) .. controls (3.31,0.3) and (6.95,1.4) .. (10.93,3.29)   ;
        	%Straight Lines [id:da5650843937790553] 
        	\draw [color={rgb, 255:red, 0; green, 0; blue, 0 }  ,draw opacity=1 ]   (250.32,140.8) -- (213.16,205.16) ;
        	\draw [shift={(212.16,206.9)}, rotate = 300] [color={rgb, 255:red, 0; green, 0; blue, 0 }  ,draw opacity=1 ][line width=0.75]    (10.93,-3.29) .. controls (6.95,-1.4) and (3.31,-0.3) .. (0,0) .. controls (3.31,0.3) and (6.95,1.4) .. (10.93,3.29)   ;
        	%Straight Lines [id:da34660390220493875] 
        	\draw [color={rgb, 255:red, 0; green, 0; blue, 0 }  ,draw opacity=1 ] [dash pattern={on 0.84pt off 2.51pt}]  (212.16,74.7) -- (212.16,206.9) ;
        	\draw [shift={(212.16,146.8)}, rotate = 270] [color={rgb, 255:red, 0; green, 0; blue, 0 }  ,draw opacity=1 ][line width=0.75]    (10.93,-3.29) .. controls (6.95,-1.4) and (3.31,-0.3) .. (0,0) .. controls (3.31,0.3) and (6.95,1.4) .. (10.93,3.29)   ;
        	%Straight Lines [id:da6991428620875582] 
        	\draw [color={rgb, 255:red, 0; green, 0; blue, 0 }  ,draw opacity=1 ] [dash pattern={on 0.84pt off 2.51pt}]  (212.16,74.7) -- (326.64,140.8) ;
        	\draw [shift={(274.6,110.75)}, rotate = 210] [color={rgb, 255:red, 0; green, 0; blue, 0 }  ,draw opacity=1 ][line width=0.75]    (10.93,-3.29) .. controls (6.95,-1.4) and (3.31,-0.3) .. (0,0) .. controls (3.31,0.3) and (6.95,1.4) .. (10.93,3.29)   ;
        	%Straight Lines [id:da6388198012240769] 
        	\draw  [dash pattern={on 0.84pt off 2.51pt}]  (174,140.8) -- (288.48,206.9) ;
        	\draw [shift={(236.44,176.85)}, rotate = 210] [color={rgb, 255:red, 0; green, 0; blue, 0 }  ][line width=0.75]    (10.93,-3.29) .. controls (6.95,-1.4) and (3.31,-0.3) .. (0,0) .. controls (3.31,0.3) and (6.95,1.4) .. (10.93,3.29)   ;
        	%Straight Lines [id:da6360142375757623] 
        	\draw  [dash pattern={on 0.84pt off 2.51pt}]  (288.48,74.7) -- (288.48,206.9) ;
        	\draw [shift={(288.48,146.8)}, rotate = 270] [color={rgb, 255:red, 0; green, 0; blue, 0 }  ][line width=0.75]    (10.93,-3.29) .. controls (6.95,-1.4) and (3.31,-0.3) .. (0,0) .. controls (3.31,0.3) and (6.95,1.4) .. (10.93,3.29)   ;
        	
        	% Text Node
        	\draw (247.12,144.4) node [anchor=north west][inner sep=0.75pt]  [font=\tiny]  {$u$};
        	% Text Node
        	\draw (285.12,63.2) node [anchor=north west][inner sep=0.75pt]  [font=\tiny]  {$v$};
        	% Text Node
        	\draw (205.12,63.6) node [anchor=north west][inner sep=0.75pt]  [font=\tiny]  {$w$};
        	% Text Node
        	\draw (205.52,210.6) node [anchor=north west][inner sep=0.75pt]  [font=\tiny]  {$\tilde{v}$};
        	% Text Node
        	\draw (289.12,211.4) node [anchor=north west][inner sep=0.75pt]  [font=\tiny]  {$\tilde{w}$};
        	% Text Node
        	\draw (334.32,137.4) node [anchor=north west][inner sep=0.75pt]  [font=\tiny]  {$t$};
        	% Text Node
        	\draw (163.36,136.6) node [anchor=north west][inner sep=0.75pt]  [font=\tiny]  {$\tilde{t}$};
        	% Text Node
        	\draw (246.26,61.8) node [anchor=north west][inner sep=0.75pt]  [font=\tiny,color={rgb, 255:red, 0; green, 0; blue, 0 }  ,opacity=1 ]  {$f_{1}$};
        	% Text Node
        	\draw (172.96,99.6) node [anchor=north west][inner sep=0.75pt]  [font=\tiny,color={rgb, 255:red, 0; green, 0; blue, 0 }  ,opacity=1 ]  {$-e_{3}$};
        	% Text Node
        	\draw (171.76,175.9) node [anchor=north west][inner sep=0.75pt]  [font=\tiny,color={rgb, 255:red, 0; green, 0; blue, 0 }  ,opacity=1 ]  {$-f_{2}$};
        	% Text Node
        	\draw (240.96,211.8) node [anchor=north west][inner sep=0.75pt]  [font=\tiny,color={rgb, 255:red, 0; green, 0; blue, 0 }  ,opacity=1 ]  {$-f_{1}$};
        	% Text Node
        	\draw (309.96,98.9) node [anchor=north west][inner sep=0.75pt]  [font=\tiny,color={rgb, 255:red, 0; green, 0; blue, 0 }  ,opacity=1 ]  {$f_{2}$};
        	% Text Node
        	\draw (311.36,173.3) node [anchor=north west][inner sep=0.75pt]  [font=\tiny,color={rgb, 255:red, 0; green, 0; blue, 0 }  ,opacity=1 ]  {$e_{3}$};
        	% Text Node
        	\draw (235.76,109) node [anchor=north west][inner sep=0.75pt]  [font=\tiny,color={rgb, 255:red, 0; green, 0; blue, 0 }  ,opacity=1 ]  {$-f_{2}$};
        	% Text Node
        	\draw (270.36,131) node [anchor=north west][inner sep=0.75pt]  [font=\tiny,color={rgb, 255:red, 0; green, 0; blue, 0 }  ,opacity=1 ]  {$f_{1}$};
        	% Text Node
        	\draw (192.96,120) node [anchor=north west][inner sep=0.75pt]  [font=\tiny,color={rgb, 255:red, 0; green, 0; blue, 0 }  ,opacity=1 ]  {$\frac{f_{2}}{2}$};
        	% Text Node
        	\draw (224.94,182.95) node [anchor=north west][inner sep=0.75pt]  [font=\tiny,color={rgb, 255:red, 0; green, 0; blue, 0 }  ,opacity=1 ]  {$\frac{e_{3}}{2}$};
        	% Text Node
        	\draw (269.56,85.4) node [anchor=north west][inner sep=0.75pt]  [font=\tiny,color={rgb, 255:red, 0; green, 0; blue, 0 }  ,opacity=1 ]  {$e_{3}$};
        	% Text Node
        	\draw (265.36,155.1) node [anchor=north west][inner sep=0.75pt]  [font=\tiny,color={rgb, 255:red, 0; green, 0; blue, 0 }  ,opacity=1 ]  {$-\frac{f_{2}}{2}$};
        	% Text Node
        	\draw (213.26,113.7) node [anchor=north west][inner sep=0.75pt]  [font=\tiny,color={rgb, 255:red, 0; green, 0; blue, 0 }  ,opacity=1 ]  {$E_{3}{}_{2}$};
        	% Text Node
        	\draw (290.76,148.6) node [anchor=north west][inner sep=0.75pt]  [font=\tiny,color={rgb, 255:red, 0; green, 0; blue, 0 }  ,opacity=1 ]  {$-E_{3}{}_{2}$};
        	% Text Node
        	\draw (245.96,173.9) node [anchor=north west][inner sep=0.75pt]  [font=\tiny,color={rgb, 255:red, 0; green, 0; blue, 0 }  ,opacity=1 ]  {$E_{1}{}_{2}$};
        	% Text Node
        	\draw (239.56,84) node [anchor=north west][inner sep=0.75pt]  [font=\tiny,color={rgb, 255:red, 0; green, 0; blue, 0 }  ,opacity=1 ]  {$-E_{1}{}_{2}$};
        	        	
        \end{tikzpicture}\]
        \caption{Action of certain elements of $\g_2$ on $V_{-a_3}$}
        \label{5}
        \end{figure}
\end{lem}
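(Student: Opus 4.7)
The plan is the following. By Lemma~\ref{ap 7rep 1} every weight space of $V_{-a_3}$ is one-dimensional, so once nonzero basis vectors $v, w, \t{t}, \t{v}, \t{w}, t, u$ have been chosen in the seven weight spaces, each labelled arrow of Figure~\ref{5} asserts a scalar equality of the form (element of $\g_2$ acting on basis vector)~$=$~(fixed multiple of another basis vector). The content of the lemma is therefore that these seven vectors can be rescaled so that all the prescribed scalars come out exactly as indicated.

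First I would fix a nonzero highest weight vector $v$ at weight $-a_3$ and then use six of the arrows of Figure~\ref{5} to \emph{define} the remaining basis vectors:
\[
w := f_1 \cdot v,\quad \t{t} := -e_3 \cdot w,\quad \t{v} := -f_2 \cdot \t{t},\quad \t{w} := -f_1 \cdot \t{v},\quad t := e_3 \cdot \t{w},\quad u := \tfrac{1}{2}\, f_2 \cdot w.
\]
Irreducibility of $V_{-a_3}$ forces each of these vectors to be nonzero, and one-dimensionality of the corresponding weight spaces then makes them honest basis vectors. With these normalizations the six perimeter arrows and the interior arrow $\tfrac{f_2}{2}\colon w \to u$ hold tautologically.

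It then remains to check the closure identity $f_2 \cdot t = v$, the other five interior arrows involving $u$ (for example $-\tfrac{f_2}{2} \cdot \t{v} = u$, $\tfrac{e_3}{2} \cdot \t{w} = u$, together with the actions of $f_1, -f_2, e_3$ at the appropriate vertices), and the four dotted diagonals labelled $E_{32}$, $-E_{12}$, $-E_{32}$, $E_{12}$. Each of these is a direct computation in $\End(V_{-a_3})$: I would express each side as a composition of the generators $e_i$, $f_j$, $E_{kl}$ acting on $v$, move all raising operators to the right using the explicit brackets of~\S\ref{ap g2} (e.g.\ $[E_{32}, f_1] = 0$ from $[x, v^t] = -v^t x$, and $[e_3, f_2] = -3 E_{32}$ from $[v, w^t] = -3 v w^t + (w^t v)\Id$), and then use the annihilation relations $e_i \cdot v = 0$ for the relevant raising $e_i$, $E_{ij} \cdot v = 0$ for positive-root $E_{ij}$, and the fact that $f_3 \cdot v$ has weight $-2 a_3$ outside the weight diagram and therefore vanishes. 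Each resulting expression reduces to a scalar multiple of $v$ (or of the declared target basis vector), whose coefficient can then be read off.

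The main obstacle is the bookkeeping of constants coming from the realization $\g_2 = V \oplus \sl_3(\C) \oplus V^t$: the nonstandard factors $2$ and $-3$ in $[v, w] = 2(v \times w)^t$ and $[v, w^t] = -3 v w^t + (w^t v)\Id$ propagate through the compositions and produce precisely the $\tfrac{1}{2}$'s and signs displayed in Figure~\ref{5}. Once these conventions are tracked carefully, each of the required identities becomes a short commutator calculation, and any residual freedom in rescaling $v$ is absorbed in the overall normalization.
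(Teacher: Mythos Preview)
Your overall strategy coincides with the paper's: the paper gives only a one-paragraph ``Plan of the proof'' saying to verify commutativity of the triangles via the three brackets $[E_{12},f_1]=-f_2$, $[f_1,f_2]=2e_3$, $[e_3,f_2]=-3E_{32}$, exactly the kind of commutator bookkeeping you describe.

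There is, however, a concrete error in your chain of definitions: you have read three of the perimeter arrows in the wrong direction. In Figure~\ref{5} the right half of the hexagon runs $v\xrightarrow{f_2}t\xrightarrow{e_3}\t{w}\xrightarrow{-f_1}\t{v}$, i.e.\ $f_2\cdot v=t$, $e_3\cdot t=\t{w}$, $-f_1\cdot\t{w}=\t{v}$. Your definitions $\t{w}:=-f_1\cdot\t{v}$ and $t:=e_3\cdot\t{w}$ apply the operators backwards; for instance $-f_1\cdot\t{v}$ would have weight $a_3-a_1$, which is not a weight of $V_{-a_3}$, so it vanishes and cannot serve as a basis vector. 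Likewise your ``closure identity $f_2\cdot t=v$'' is reversed. The fix is simply to traverse the right half forward, setting $t:=f_2\cdot v$, $\t{w}:=e_3\cdot t$, and then verifying the genuine closure $-f_1\cdot\t{w}=\t{v}$ where the two halves meet. With this correction the rest of your plan goes through unchanged.
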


\begin{proof}[Plan of the proof]
	The proof is quite technical and uses three facts: $[E_{12}, f_1] = -f_2$, $[f_1, f_2] = 2e_3$ and $[e_3, f_2] = -3E_{32}$. One should prove the commutativity of triangles (literally as in Lemma \ref{ap 7rep 1} we have proved $f_1 E_{12} \cd w = f_2 \cd w$).
\end{proof}

Now we are ready to give an explicit formula for $q$.

\begin{lem} \label{ap 7rep 3}
  The quadratic form $q = u^2 - 4 v \t{v} - 4 w \t{w}  - 4 t \t{t} \in S^2 V_{-a_3}$ is
  $\g_2$-invariant.
\end{lem}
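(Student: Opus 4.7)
The first step is to note that $q$ is of weight zero with respect to the Cartan subalgebra $\ft \subset \g_2$: within each monomial the two factors carry opposite weights ($u$ has weight $0$; $v,\tilde v$ have weights $\mp a_3$; $w,\tilde w$ have weights $\pm a_2$; $t,\tilde t$ have weights $\pm a_1$). Consequently $q \in (S^2 V_{-a_3})_0$ is automatically annihilated by $\ft$ acting by derivations on $S^2 V_{-a_3}$.

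To promote this to $\g_2$-invariance, the plan is as follows. Any nonzero element of $S^2 V_{-a_3}$ of weight $0$ that is annihilated by a Borel subalgebra of $\g_2$ generates a copy of the trivial $\g_2$-module, since the trivial representation is the unique irreducible of highest weight $0$; such an element is therefore $\g_2$-invariant. The positive nilpotent radical of a Borel is generated under the Lie bracket by the positive simple root vectors, so it suffices to check that $q$ is killed by the two positive simple root vectors of $\g_2$ (one short and one long).

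The concrete verification is a Leibniz-rule computation using the action scalars read off Figure~\ref{5} and Lemma~\ref{ap 7rep 2}. For each chosen positive simple root vector $x$, I would expand $x \cdot q = x\cdot(u^2) - 4\,x\cdot(v\tilde v) - 4\,x\cdot(w\tilde w) - 4\,x\cdot(t\tilde t)$ term by term, substitute the explicit values of $x\cdot u$, $x\cdot v$, etc., and verify that the contributions cancel in each weight space of $S^2 V_{-a_3}$. The normalizations built into Lemma~\ref{ap 7rep 2}---in particular the coefficients $\tfrac12$ on the arrows through $u$ and the sign choices on the dotted diagonals $\pm E_{12}, \pm E_{32}$---are tuned precisely to produce the coefficient $-4$. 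The main difficulty is purely combinatorial: careful sign bookkeeping along the hexagon, and the factor of $2$ produced when $x$ hits $u^2$ via Leibniz. A useful consistency check is that the three paired terms $v\tilde v$, $w\tilde w$, $t\tilde t$ enter with the same coefficient $-4$, reflecting the threefold symmetry of the hexagon in Figure~\ref{5} permuting its three long diagonals.
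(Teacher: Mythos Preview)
Your strategy is sound: a weight-zero element of $S^2 V_{-a_3}$ annihilated by both positive simple root vectors is a highest-weight vector of weight $0$, hence $\g_2$-invariant. The paper, however, takes a different route. Instead of invoking highest-weight theory and the two simple root vectors, it checks directly that $q$ is annihilated by $f_1, f_2, f_3$ (the three short-root vectors spanning $V^t$ in the decomposition $\g_2 = V \oplus \sl_3(\C) \oplus V^t$), and then observes that these three elements generate $\g_2$ as a Lie algebra: via $[f_i,f_j] = 2\, e_i \times e_j$ they produce all of $V$, and $V$ together with $V^t$ produces $\sl_3(\C)$ through $[v,w^t] = -3vw^t + w^t v \cdot \Id$.

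The practical difference is that the actions of $f_1, f_2$ (and of $e_3$, from which the action of $f_3$ is recovered via $[e_3,f_3] = E_{11}+E_{22}-2E_{33}$) are exactly what Figure~\ref{5} records, so the paper's Leibniz checks are immediate. Your route would first require you to identify which root vectors are the positive simples for the polarization of Figure~\ref{3} and then to extract their action on the weight basis, which Figure~\ref{5} does not give directly (it records $f_1,f_2,e_3$ on the perimeter and $\pm E_{12},\pm E_{32}$ on the diagonals). Both arguments are short once the bookkeeping is in place; the paper's generating set is simply better adapted to the data already tabulated.
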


\begin{proof}
	Let us prove first that $q$ is $f_1$-, $f_2$- and $f_3$-invariant. For $f_1$ we have $f_1 \cd v = w$, $f_1 \cd u = 2 \t{t}$, $f_1 \cd t = u$, $f_1 \cd \t{w} = -\t{v}$ and zero for other vectors. Clearly, this implies that $f_1 \cd q = 0$. Similarly one can show $f_2 \cd q = 0$. 
	
	Now let us consider $f_3$. We need to compute $f_3 \cd \t{t}$. Write $[e_3, f_3] = -3 E_{33} + \Id = E_{11} + E_{22} - 2E_{33}$. So $e_3f_3 \cd \t{t} = [e_3, f_3] \cd \t{t}$ (because $e_3 \cd \t{t} = 0$). Thus $e_3 f_3 \cd \t{t} = - \t{t}$, because $\t{t}$ has weight $-a_1$. Since $e_3 \cd w = - \t{t}$, we have $f_3 \cd \t{t} = w$. Similarly one can show $f_3 \cd \t{w} = -t$, $f_3 \cd \t{v} =  -u$, $f_3 \cd u = - 2v$. This implies $f_3 \cd q = 0$.
	
	Since $q$ is invariant under the action of $f_1, f_2, f_3$, and $f_1, f_2, f_3$ generate
        $\g_2$ (indeed, they generate $V$, and $V$ together with $V^t$ generate $\sl_3(\C)$,
        $q$ is $\g_2$-invariant.
\end{proof}

Clearly $q$ is non-degenerate. Therefore, we may consider the $\g_2$-action on $V_{-a_3}^t \simeq V_{-a_3}$ preserving
a non-degenerate symmetric $2$-form. This form is unique up to a scalar, because the zero-weight subspace in $S^2 V_{-a_3}$ is one-dimensional.

%--------------------------------------

%\newpage

\newcommand{\Addresses}{{% additional braces for segregating \footnotesize
		\bigskip
		
		\textsc{National Research University Higher School of Economics, Russian Federation, Department of
			Mathematics, 6 Usacheva st, 119048 Moscow}\par\nopagebreak
		\textit{E-mail address}, N.~Kononenko: \: \texttt{nakononenko@edu.hse.ru}
		
}}

\Addresses


\begin{thebibliography}{XXXXXX}
	
\bibitem[BZSV]{bzsv} D. Ben-Zvi, Y. Sakellaridis and A. Venkatesh, {\em Relative Langlands duality}, \href{https://doi.org/10.48550/arXiv.2409.04677}{arXiv:2409.04677}
	
\bibitem[BDFRT]{bdfrt} A. Braverman, G. Dhillon, M. Finkelberg, S. Raskin, R. Travkin, {\em Coulomb branches of noncotangent type}, \href{https://doi.org/10.48550/arXiv.2201.09475}{arXiv:2201.09475}

\bibitem[C]{c} P.~Crooks, {\em Universal families of twisted cotangent bundles}, \href{https://doi.org/10.48550/arXiv.2306.06439}{arXiv:2306.06439} 

\bibitem[CG]{cg} N.~Chriss, V.~Ginzburg, {\em Representation Theory and Complex Geometry}, Birkh{\"a}user Boston, 2010.

\bibitem[FGT]{fgt} M.~Finkelberg, V.~Ginzburg, R.~Travkin, {\em Lagrangian Subvarieties of Hyperspherical Varieties}, Geometric and Functional Analysis, vol. 35 (2025), no. 1, 254–282, \href{https://doi.org/10.48550/arXiv.2310.19770}{arXiv:2310.19770}

\bibitem[FH]{fh} W.~Fulton, J.~Harris, {\em Representation Theory: a First Course}, Springer-Verlag, New-York, 1991

\bibitem[GG]{gg} W.L.~Gan, V.~Ginzburg, {\em Quantization of Slodowy slices},
Int.\ Math.\ Res.\ Not., Vol.~5 (2002), 243--255.

\bibitem[H]{h} V.~Hoskins, {\em Lecture course: Geometric invariant theory and symplectic quotients}, Zurich, Winter Semester 2012, \href{https://userpage.fu-berlin.de/hoskins/GITnotes.pdf}{lecture notes}. 

\bibitem[K]{k} A.~Kirillov~Jr, {\em An Introduction to Lie Groups and Lie Algebras}, Cambridge University Press, 2008

\bibitem[Li]{l} W.~Lisiecki, {\em Holomorphic Lagrangian bundles over flag manifolds}, J. Reine Angew. Math,
363 (1985), 174–190.

\bibitem[Lo1]{l1} I.~Losev, {\em Symplectic slices for actions of reductive groups}, Sbornik: Mathematics, 2006, Volume 197, Issue 2, 213--224

\bibitem[Lo2]{l2} I.~Losev, {\em Quantized symplectic actions and W-algebras},
Journal of the American Mathematical Society, Vol.~23, No.~1 (2010), 35--59.

\bibitem[Lo3]{l3} I.~Losev, the course {\em MATH 7364, Topics in Representation theory: Symplectic reflection algebras}, Fall 2012, \href{https://gauss.math.yale.edu/~il282/SRA/SRA}{the course notes}.

\bibitem[PV]{pv} V.~Popov, E.~Vinberg, {\em Invariant Theory}, Encyclopaedia of Mathematical Sciences, Volume 55.

\bibitem[R]{r} M.~Rosenlicht, {\em Some Basic Theorems on Algebraic Groups}, American Journal of Mathematics, Vol. 78, No. 2 (Apr., 1956), pp. 401--443.

\bibitem[S]{s} T.A.~Springer, {\em Linear Algebraic Groups}, Birkh{\"a}user Boston, 1998.

\end{thebibliography}
\end{document}